\numberwithin{equation}{section}
\newtheorem{thm}{Theorem}[section]
\newtheorem*{thm*}{Theorem}
\newtheorem{cor}[thm]{Corollary}
\newtheorem{prop}[thm]{Proposition}
\newtheorem{lem}[thm]{Lemma}
\theoremstyle{definition}
\newtheorem{defn}[thm]{Definition}
\theoremstyle{remark}
\newtheorem{rmk}[thm]{Remark}
\newtheorem{exam}[thm]{Example}
\newcommand{\co}{\colon\thinspace}
\newcommand{\mb}[1]{\mathbb{#1}}
\newcommand{\mf}[1]{\mathfrak{#1}}
\newcommand{\Sp}{{\mathcal{S}p}}
\newcommand{\Spa}{{\mathcal{S}}}
\newcommand{\op}{{op}}
\newcommand{\too}{\xrightarrow}
\newcommand{\CC}{{\mathcal{C}}}
\newcommand{\DD}{{\mathcal{D}}}
\newcommand{\EE}{{\mathcal{E}}}
\newcommand{\FF}{{\mathcal{F}}}
\newcommand{\KK}{{\mathcal{K}}}
\newcommand{\MM}{{\mathcal{M}}}
\newcommand{\NN}{{\mathcal{N}}}
\newcommand{\OO}{{\mathcal{O}}}
\newcommand{\PP}{{\mathcal{P}}}
\newcommand{\TT}{{\mathcal{T}}}
\newcommand{\LM}{{\mathcal{LM}}}
\newcommand{\Ups}{\Upsilon}
\newcommand{\Op}{{\mathop{Op}}}
\newcommand{\wt}{\widetilde}
\DeclareMathOperator{\Sq}{Sq}
\DeclareMathOperator{\Map}{Map}
\DeclareMathOperator{\End}{End}
\DeclareMathOperator{\Pow}{Pow}
\DeclareMathOperator{\Mod}{Mod}
\DeclareMathOperator{\CAlg}{CAlg}
\DeclareMathOperator{\Alg}{Alg}
\DeclareMathOperator{\Fun}{Fun}
\DeclareMathOperator{\Sym}{Sym}
\DeclareMathOperator{\Mul}{Mul}
\DeclareMathOperator{\Free}{Free}
\DeclareMathOperator{\ev}{ev}
\DeclareMathOperator{\StNat}{StNat}
\DeclareMathOperator{\LMod}{LMod}
\DeclareMathOperator{\Tr}{Tr}
\DeclareMathOperator{\Res}{Res}
\DeclareMathOperator*{\colim}{colim}
\DeclareMathOperator*{\hocolim}{hocolim}
\DeclareMathOperator*{\holim}{holim}
\DeclareMathOperator*{\sma}{\wedge}
\title{Stable power operations}
\author{Saul Glasman, Tyler Lawson}
\begin{document}
\maketitle

\begin{abstract}
  For any $E_\infty$ ring spectrum $E$, we show that there is an
  algebra $\Pow(E)$ of stable power operations that acts naturally on
  the underlying spectrum of any $E$-algebra. Further, we show that
  there are maps of rings $E \to \Pow(E) \to \End(E)$, where the latter
  determines a restriction from power operations to stable operations
  in the cohomology of spaces. In the case where $E$ is the mod-$p$
  Eilenberg--Mac Lane spectrum, this realizes a natural quotient from
  Mandell's algebra of generalized Steenrod operations to the mod-$p$
  Steenrod algebra. More generally, this arises as part of a
  classification of endomorphisms of representable functors from an
  $\infty$-category $\CC$ to spectra, with particular attention to the
  case where $\CC$ is an $\OO$-monoidal $\infty$-category.
\end{abstract}

\section{Introduction}

The Steenrod reduced power operations occupy the unusual position that
they are often constructed in one way and used in another. We
typically construct the Steenrod operations as {\em power operations}:
the diagonal map $X \to X^p$ is equivariant for the action of the
symmetric group $\Sigma_p$, and this produces cochain-level structure
that results in cohomology-level operations. We typically think of the
Steenrod operations as {\em stable operations}: they generate all
possible stable natural transformations between cohomology groups, and
as such are represented by maps between Eilenberg--Mac Lane
spectra. In some sense, these are opposing perspectives. For example,
if $x$ is an element in $H^n(X;\mb F_2)$ then each of the identities
$Sq^n(x) = x^2$ and $Sq^0(x) = x$ is obvious from one of these two
points of view and not the other.

This divide becomes even more apparent if we replace ordinary mod-$p$
cohomology with a generalized cohomology theory $E^*$, represented by
a spectrum $E$. The stable operations on $E$-cohomology of degree $n$
are in bijection with homotopy classes of maps $E \to \Sigma^n E$, and
form a graded algebra. Power operations are more subtle and need more
from $E$. They require a refined multiplicative structure on $E$ to
define (e.g. see \cite{bmms-hinfty}) and have a more delicate
composition structure which is only distributive on one side
\cite{bergner-multisorted, borger-wieland-plethystic, rezk-wilkerson},
but they appear in more contexts: they also appear in the $E$-homology
of infinite loop spaces and the $E$-homology of commutative ring
spectra. Restricting our attention to those power operations that
preserve addition (called {\em additive} or {\em primitive}), we
obtain a graded algebra. In the case where $E$ is mod-$p$ cohomology,
this is called the algebra of (Araki--Kudo--)Dyer--Lashof operations
or simply the Dyer--Lashof algebra.

In terms of stable homotopy theory, we may identify $E^n(X)$ with the
homotopy group $\pi_{-n}$ of the function spectrum
$F(\Sigma^\infty_+ X, E) = E^X$. From this point of view, these groups
have stable operations because $F(\Sigma^\infty_+ X,E)$ is a left
module over the endomorphism algebra $F(E,E) = \End(E)$. On the other
hand, if $E$ is a commutative ring spectrum these groups have power
operations because they are the homotopy groups of the commutative
$E$-algebra $E^X$. These two roles played by $E^X$ are encapsulated in
a diagram of categories that commutes up to natural isomorphism:
\[
\xymatrix@C=4pc{
\Spa^\op \ar[r]^-{F(\Sigma^\infty_+(-),E)} \ar_{E^{(-)}}[d] &
\Mod_{\End(E)} \ar[d] \\
\CAlg_E \ar[r] & \Mod_E
}
\]
Here $\Spa$ is the category of spaces, $\Mod_R$ is the category of
left $R$-modules, and $\CAlg_E$ is the category of commutative
$E$-algebras.

In the case of ordinary mod-$p$ cohomology, we have the Steenrod
algebra $\mf{A}^*$ of stable operations on mod-$p$ cohomology and the
algebra $\mf{B}^*$ of generalized Steenrod operations \cite[\S
5]{mandell-einftypadic} acting on the homotopy of any commutative
algebra over the Eilenberg-Mac Lane spectrum $H\mb F_p$. For the
algebra $(H\mb F_p)^X$, these actions are related: there is a quotient
map of algebras $\mf{B}^* \to \mf{A}^*$ that annihilates the two-sided
ideal generated by a certain element $(1 - P^0)$.

Our goal is to systematically isolate what distinguishes the Steenrod
operations and Araki--Kudo--Dyer--Lashof operations as special among
power operations and gives them a relation to stable operations. For a
given commutative $E$, we will construct a ring spectrum $\Pow(E)$ of
\emph{stable power operations} that acts on the underlying spectrum of
any commutative $E$-algebra, together with a map $\Pow(E) \to
\End(E)$ that determines how stable power operations act on the
$E$-cohomology of spaces. Here are more precise statements of our
main results.
\begin{thm}\label{thm1.1}
  For any commutative ring spectrum $E$, there is an associative ring
  spectrum $\Pow(E)$ with a diagram of associative ring spectra
  \[
    E \to \Pow(E) \to \End(E).
  \]

  The ring $\Pow(E)$ has a natural action on the underlying
  spectrum of a commutative $E$-algebra in a matter compatible with
  stable cohomology operations, in the sense that there is a canonical
  lift in the following diagram:
  \[
    \xymatrix@C=4pc{
      \Spa^\op \ar[dd]_{E^{(-)}} \ar[r]^-{F(\Sigma^\infty_+(-),E)} &
      \Mod_{\End(E)} \ar[d] \\
      & \Mod_{\Pow(E)} \ar[d] \\
      \CAlg_E \ar[r] \ar@{.>}[ur] &
      \Mod_E
    }
  \]
\end{thm}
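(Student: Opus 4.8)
The plan is to realize $\Pow(E)$ as the endomorphism ring spectrum of the forgetful functor $U \co \CAlg_E \to \Sp$, and to obtain the three displayed rings, the module actions, and the lift purely by restricting endomorphisms along functors of source $\infty$-categories, feeding the genuinely nontrivial computations into the general classification of stable endomorphisms of representable spectrum-valued functors. Since $\Sp$ is presentable and stable, so is $\Fun(\CAlg_E,\Sp)$, and $U$ --- a composite $U'' \circ U'$ of the forgetful functor $U' \co \CAlg_E \to \Mod_E$ with the underlying-spectrum functor $U'' \co \Mod_E \to \Sp$, hence a right adjoint and in particular accessible --- is a genuine object there; I set $\Pow(E) := \mathrm{map}_{\Fun(\CAlg_E,\Sp)}(U,U)$, an $E_1$-ring under composition of natural transformations. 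To recognize it, one uses $U \simeq \colim_m \Sigma^{-m}\Sigma^{\infty}\,\Omega^{\infty}\Sigma^{m} U$ together with the free-forgetful adjunction identification $\Omega^{\infty}\Sigma^{m} U \simeq \Map_{\CAlg_E}(\Sym_E(\Sigma^{-m}E),-)$, exhibiting $U$ as pro-corepresented by the free commutative $E$-algebras $\Sym_E(\Sigma^{-m}E)$; the classification result then presents $\Pow(E)$ as the corresponding limit over $m$ of stabilized mapping objects assembled from these free algebras, the wedge summand attached to the $k$-th extended power $(\Sigma^{-m}E)^{\wedge_E k}_{h\Sigma_k}$ being what specializes, for $E = H\mb F_p$, to the degree-$k$ part of Mandell's algebra $\mf B^*$.

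First I would build the two ring maps. Precomposition with $U'$ is exact and carries $U''$ to $U$, hence induces an $E_1$-ring map $\mathrm{map}_{\Fun(\Mod_E,\Sp)}(U'',U'') \to \Pow(E)$; and since $U''$ preserves colimits, the identification $\Fun^{L}(\Mod_E,\Sp) \simeq \Mod_E$ sends $U''$ to $E$ with its own multiplication, so this is the map $E \to \Pow(E)$. For the second map I restrict along the cochain functor $E^{(-)} \co \Spa^{\op} \to \CAlg_E$, $X \mapsto F(\Sigma^\infty_+X,E)$, whose composite with $U$ is $F(\Sigma^\infty_+(-),E)$; restriction produces $\Pow(E) \to \mathrm{map}_{\Fun(\Spa^{\op},\Sp)}\bigl(F(\Sigma^\infty_+(-),E),\,F(\Sigma^\infty_+(-),E)\bigr)$, whose target I identify with $\End(E)$: the functor $F(\Sigma^\infty_+(-),E) \co \Spa^{\op} \to \Sp$ preserves all limits, so by the universal property $\Spa = \Psh(\ast)$ evaluation at the point identifies it with $F(\mb S,E) = E$ and its endomorphism ring with $F(E,E) = \End(E)$ (the opposite-ring ambiguities cancel). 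Composing gives $\Pow(E) \to \End(E)$, and the composite $E \to \Pow(E) \to \End(E)$ is then the standard ring map $E \to F(E,E)$.

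For the module structures and the lift: in the $\Sp$-linear $\infty$-category $\Fun(\CAlg_E,\Sp)$ the object $U$ is tautologically a left module over $\mathrm{map}(U,U) = \Pow(E)$, and since forming $\Pow(E)$-modules commutes with this functor category, that datum is precisely a functor $\wt U \co \CAlg_E \to \Mod_{\Pow(E)}$ lifting $U$ --- the dotted arrow. The compatibilities are then formal consequences of naturality of tautological module structures under exact functors: restricting $\wt U$ along $\Mod_{\Pow(E)} \to \Mod_E$ (itself restriction along $E \to \Pow(E)$) returns $U'$, since $E \to \Pow(E)$ arose from the identification $\End(U'') \simeq E$ whose tautological action on $U''$ is the $E$-module structure; and the composite $\Spa^{\op} \xrightarrow{F(\Sigma^\infty_+(-),E)} \Mod_{\End(E)} \to \Mod_{\Pow(E)}$, with the second arrow restriction along $\Pow(E) \to \End(E)$, agrees with $\wt U \circ E^{(-)}$, since $\Pow(E) \to \End(E)$ was defined precisely as the comparison $\End(U) \to \End\bigl((E^{(-)})^{*}U\bigr) \simeq \End(E)$ of tautological actions along $E^{(-)}$. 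Gluing these triangles and squares produces the displayed lifting diagram.

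The main obstacle is not this formal scaffolding but two computational inputs, which I expect the classification section to supply. First, the explicit description of $\Pow(E)$: the subtlety is that $\Map_{\CAlg_E}(\Sym_E(\Sigma^{-m}E),-)$ is only a \emph{pointed}-space-valued corepresentable functor, so its suspension spectrum is not itself corepresentable, and one must control how stabilization interacts with corepresentability by free commutative algebras --- this is exactly where the extended-power (Dyer--Lashof and Steenrod) structure enters, and it is the crux of the paper. Second, one needs that $\mathrm{map}_{\Fun(\Spa^{\op},\Sp)}\bigl(F(\Sigma^\infty_+(-),E),\,F(\Sigma^\infty_+(-),E)\bigr)$ genuinely \emph{equals} $\End(E)$ rather than merely receiving a map from it; this equivalence is what fixes the direction $\Pow(E) \to \End(E)$, hence pins down the restriction functor $\Mod_{\End(E)} \to \Mod_{\Pow(E)}$ that furnishes the right-hand vertical composite, and is ultimately what makes the diagram express a refinement of stable cohomology operations by stable power operations.
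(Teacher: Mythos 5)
Your proposal is correct and follows essentially the same route as the paper: all three rings are the endomorphism rings of (co)representable spectrum-valued functors, the maps $E \to \Pow(E) \to \End(E)$ arise by restricting endomorphisms along the right adjoints $\CAlg_E \to \Mod_E$ and $\Spa^\op \to \CAlg_E$ (which the paper packages as Proposition~\ref{prop:functoriality} applied to the representing cospectrum objects), and the lift is the tautological left-module structure of a functor over its own endomorphism ring, transported through the equivalence $\LMod_A(\Fun(S,\Sp)) \simeq \Fun(S,\LMod_A)$. The two ``computational inputs'' you defer are exactly what the paper's representability results supply: $\End$ of a representable functor agrees with $\End$ of its representing object, which for $F(\Sigma^\infty_+(-),E)$ is $F(E,E)$ and for the forgetful functor on $\CAlg_E$ is $\holim_n \Sigma^n \wt\Free_E(\Omega^n E)$.
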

These three module categories, though initially appearing for quite
different reasons, are connected in the following way. The ring
$\End(E)$ is the ring of endomorphisms of the functor
$E^{(-)}\co \Spa^\op \to \Sp$; $E$ is the ring of endomorphisms of the
forgetful functor $\Mod_E \to \Sp$; and $\Pow(E)$ is the ring of
endomorphisms of the forgetful functor $\CAlg_E \to \Sp$. Moreover,
these three functors are representable: the representing objects are
cospectra.
\begin{itemize}
\item The functor $\Spa^\op \to \Sp$ is represented by the spectrum
  $E = \{E_n\}_{n \in \mb Z}$ itself. This is a spectrum object in
  $\Spa$ and hence a cospectrum object in $\Spa^\op$.
\item The forgetful functor $\Mod_E \to \Sp$ is represented by the
  desuspensions $\{\Sigma^n E\}_{n \in \mb Z}$. These form a cospectrum
  object in $\Mod_E$.
\item The forgetful functor $\CAlg_E \to \Sp$ is represented by the
  free $E$-algebras $\{\mb P_E(\Omega^n E)\}_{n \in \mb Z}$. These
  form a cospectrum object in $\CAlg_E$.
\end{itemize}
Knowledge of these representing objects enables us to calculate
endomorphism rings. In the case of mod-$p$ homology, we have the
following.

\begin{thm}
  For any prime $p$, the map $\Pow({H\mb F_p}) \to \End(H\mb
  F_p)$ of ring spectra becomes, on taking coefficient rings, a
  quotient map $(\mf{B}^*)^\wedge \to \mf{A}^*$ to the Steenrod
  algebra from the completion of the algebra of generalized Steenrod
  operations with respect to the excess filtration.
\end{thm}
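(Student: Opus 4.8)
The plan is to compute the homotopy rings of $\Pow(H\mb F_p)$ and $\End(H\mb F_p)$ from the cospectra that represent the relevant functors and then to identify the induced ring map. The computation $\pi_*\End(H\mb F_p)=\mf A^*$ is classical: by the representability discussion above, $\End(H\mb F_p)$ is the function spectrum $F(H\mb F_p,H\mb F_p)$, so $\pi_{-n}\End(H\mb F_p)=[H\mb F_p,\Sigma^n H\mb F_p]=H^n(H\mb F_p;\mb F_p)$, with composition inducing the algebra structure of the Steenrod algebra. The substance of the theorem is therefore the identification of $\pi_*\Pow(H\mb F_p)$ with the excess-completion $(\mf B^*)^\wedge$ of Mandell's algebra, together with the verification that $\Pow(H\mb F_p)\to\End(H\mb F_p)$ induces on homotopy the canonical surjection onto $\mf A^*$.

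For the first point, I would use the representability results recalled above to present $\Pow(H\mb F_p)$ as a homotopy limit of a tower built from the free commutative $H\mb F_p$-algebras $\mb P_{H\mb F_p}(\Omega^n H\mb F_p)=\mb P_{H\mb F_p}(\Sigma^{-n}H\mb F_p)$, so that
\[
  \pi_{-k}\Pow(H\mb F_p)=\lim_n\pi_{n+k}\,\mb P_{H\mb F_p}(\Sigma^{-n}H\mb F_p)
\]
up to a $\lim^1$ term. The homotopy of a free commutative $H\mb F_p$-algebra on a module is computed, exactly as for extended powers of spaces, in terms of the mod-$p$ homology operations: $\pi_*\mb P_{H\mb F_p}(\Sigma^{-n}H\mb F_p)$ is the free graded-commutative algebra on the additive operations applied to a single generator $\iota$ of homotopical degree $-n$, subject to the usual instability relations. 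The suspension maps of the tower annihilate the decomposables, so the limit sees only the additive operations, and they identify the resulting system with Mandell's construction of $\mf B^*$ in \cite[\S 5]{mandell-einftypadic}; the degree-dependent relation ideals that appear at the finite stages are, one checks, exactly the stages of the excess filtration. Since passing to a generator of lower degree only imposes more relations, these transition maps are surjective, the $\lim^1$ term vanishes, and the inverse limit reproduces $\mf B^*$ together with its formal completion along the excess filtration. Finally, composition of stable natural transformations gives $\pi_*\Pow(H\mb F_p)$ the ring structure that matches Mandell's composition product, so $\pi_*\Pow(H\mb F_p)=(\mf B^*)^\wedge$.

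For the second point, the map $\Pow(H\mb F_p)\to\End(H\mb F_p)$ is, by construction, restriction of a stable power operation along the functor $H\mb F_p^{(-)}\co\Spa^\op\to\CAlg_{H\mb F_p}$. On homotopy, a class $\theta\in(\mf B^*)^\wedge$ acts on $\pi_{-*}A$ for every commutative $H\mb F_p$-algebra $A$; specializing to $A=F(\Sigma^\infty_+X,H\mb F_p)$, where $\pi_{-*}A=H^*(X;\mb F_p)$, one gets a stable natural operation on the mod-$p$ cohomology of spaces and hence an element of $\mf A^*$. This is precisely the quotient map $\mf B^*\to\mf A^*$ of \cite[\S 5]{mandell-einftypadic}, extended over the completion: the extra generator $P^0$, which is not the identity of $\mf B^*$, acts on $H^*(X;\mb F_p)$ as Steenrod's $\Sq^0=1$, so the map kills the two-sided ideal generated by $1-P^0$ and is the asserted quotient. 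Compatibility with the completion is automatic, since $\mf A^*$ is degreewise finite-dimensional, so the induced excess filtration on it is degreewise eventually constant.

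The hard part will be the first point, and within it the precise matching of two filtrations: one must show that the filtration on $\lim_n\pi_{n+k}\mb P_{H\mb F_p}(\Sigma^{-n}H\mb F_p)$ coming from the tower coincides with Mandell's excess filtration on $\mf B^*$, and that the tower is Mittag--Leffler so that no spurious $\lim^1$ intervenes. This rests on a careful analysis of the homotopy of free commutative $H\mb F_p$-algebras on negative-dimensional modules --- including the behavior of the bottom operations and, at odd primes, the Bockstein bookkeeping --- and on reconciling our grading and admissibility conventions with those of \cite{mandell-einftypadic}. The computation of $\End(H\mb F_p)$ and the description of the map on homotopy are, by contrast, formal consequences of the representability statements above and the classical computation of $H^*(H\mb F_p;\mb F_p)$.
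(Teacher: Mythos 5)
Your proposal is correct and follows essentially the same route as the paper: identify $\pi_*\Pow(H\mb F_p)$ via the tower of free commutative $H\mb F_p$-algebras on $\Omega^n H\mb F_p$ using the Dyer--Lashof description of their homotopy, observe that the transition maps kill decomposables while preserving the operations so the (Mittag--Leffler) inverse limit is the excess completion of $\mf B^*$, and then identify the map to $\End(H\mb F_p)$ on generators via the action on the cohomology of spaces ($Q^s\mapsto\Sq^{-s}$, $P^0\mapsto 1$). The filtration-matching step you flag as the hard part is handled in the paper by direct inspection --- the $n$th stage of the tower has basis the admissible $Q^I$ with $e(I)\geq -n$, which is literally the excess filtration --- so no further work is needed there.
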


As one benefit of having a spectrum level lift of the algebra
$\mf{B}^*$, we can employ several tools from the theory of secondary
operations on the homotopy groups of ring and module spectra
\cite{baues-muro-cupone, secondary}. This means that we can
immediately define operations within the Dyer--Lashof algebra, obtain
relations such as Peterson--Stein relations, and establish
compatibility of all of this structure with secondary Steenrod
operations.

We note that the construction of $\Pow(E)$ is outlined in course notes
of Lurie \cite[Lecture~24]{lurie-sullivanlectures}, where it is used
to show that power operations are compatible with geometric
realization of simplicial objects.

\subsection{Outline}

We will begin in \S\ref{sec:endomorphisms} and \S\ref{sec:presentable}
by recalling a number of results from Lurie's work on functoriality of
endomorphism objects, presentable categories, and the Yoneda
embedding. The goal is the analogue of a result from basic category
theory: if a category $\DD$ is enriched in $\mathcal{V}$, then the
enriched homomorphism objects $F(-,y)$ naturally take values in the
category of left modules over the algebra $\End(y) = F(y,y)$ in
$\mathcal{V}$. In \S\ref{sec:yoneda} we will further discuss how
spectrum objects in $\CC$ represent functors $\CC^\op \to \Sp$, and
that a left $A$-module structure on a spectrum object $Y$ is
essentially equivalent to a lift of its represented functor to left
$A$-modules. In \S\ref{sec:stable-end} we will discuss how to
calculate the universal example: the endomorphism algebra $\End(Y)$.

In \S\ref{sec:algebras-operads} we will apply this to the case of
stable presentable $\OO$-monoidal $\infty$-categories and algebras in
them. The spectra parametrizing stable operations turn out to be
related to certain generalized symmetric power functors. We will
briefly discuss the relation between stable operations and dual
Goodwillie calculus.

In \S\ref{sec:spaces} we will give our main results regarding
endomorphisms of commutative algebras and their relationship to
stable cohomology operations. In \S\ref{sec:weight},
\S\ref{sec:p-power}, and \S\ref{sec:homology} we will give example
calculations and show, in particular, the relationship with classical
Dyer--Lashof operations. For this we will need some equivariant
homotopy theory, which is carried out in the appendix
\S\ref{sec:equivariant-results}.

\subsection{Acknowledgements}

The authors would like to thank 
Tobias Barthel,
John Greenlees,
Nick Kuhn,
Akhil Mathew,
Haynes Miller,
Charles Rezk,
and
Dylan Wilson
for discussions related to this work.

The second author was partially supported by NSF grant 1610408 and a
grant from the Simons Foundation. He would like to thank the Isaac
Newton Institute for Mathematical Sciences for support and hospitality
during the programme HHH when work on this paper was undertaken. This
work was supported by: EPSRC grant numbers EP/K032208/1 and
EP/R014604/1.

\section{Functoriality of endomorphisms}
\label{sec:endomorphisms}

We will start with a brief rundown of results from
\cite{lurie-higheralgebra} on endomorphism objects.

\begin{itemize}
\item Operads and colored operads are encoded, in coherent category
  theory, by the notion of an $\infty$-operad \cite[\S
  2]{lurie-higheralgebra}.
\item To the associative operad (whose algebras are associative
  algebras $A$), and to the left-module colored operad (whose algebras
  are pairs $(A,M)$ of an algebra and a left $A$-module), there are
  associated $\infty$-operads $Assoc^\otimes$ and $\LM^\otimes$
  \cite[4.1.1.3, 4.2.1.7]{lurie-higheralgebra}.
\item A monoidal $\infty$-category $\CC$ is expressed in terms of a
  coCartesian fibration $\CC^\otimes \to Assoc^\otimes$, and an
  $\infty$-category $\MM$ left-tensored over $\CC$ is expressed as a
  coCartesian fibration $\MM^\otimes \to \LM^\otimes$
  \cite[4.1.1.10, 4.2.1.19]{lurie-higheralgebra}.
\item A lax monoidal functor $\CC \to \DD$ is expressible as a map
  $\CC^\otimes \to \DD^\otimes$ over $Assoc^\otimes$, and similarly a
  lax functor $\MM \to \NN$ of categories left-tensored over $\CC$ and
  $\DD$ is expressible as a map $\MM^\otimes \to \NN^\otimes$ over
  $\LM^\otimes$. Strong monoidal functors and functors strongly
  compatible with left-tensoring are expressed in terms of
  coCartesian morphisms \cite[2.1.3.6, 2.1.3.7]{lurie-higheralgebra}.
\item Right adjoints to strong monoidal functors and strongly
  compatible left-tensorings are lax
  \cite[7.3.2.7]{lurie-higheralgebra}.
\item Associative algebra objects $A$ in $\CC$ are defined as certain
  sections of the map $\CC^\otimes \to Assoc^\otimes$, and pairs of an
  associative algebra $A$ in $\CC$ and a left $A$-module $M$ in $\MM$
  are defined as certain sections of the maps $\MM^\otimes \to
  \LM^\otimes$ \cite[2.1.3.1, 4.1.1.6,
  4.2.1.13]{lurie-higheralgebra}. The category of algebras is
  denoted $\Alg(\CC)$ and the category of algebra-module pairs is
  denoted $\LMod(\MM)$, admitting forgetful functors to $\CC$ and $\CC
  \times \MM$ respectively.
\item In terms of these sections, a lax monoidal functor $\CC \to \DD$
  therefore induces a functor $\Alg(\CC) \to \Alg(\DD)$, and a lax
  tensored functor induces a compatible functor $\LMod(\MM) \to
  \LMod(\NN)$.
\item Further, a left-tensored category $\MM$ is enriched over $\CC$
  if, for any objects $M$ and $M'$ in $\MM$, there exists a function
  object $F_\MM(M,M')$ in $\CC$ such that maps $C \otimes M \to M'$
  are equivalent to maps $C \to F_\MM(M, M')$. Function objects are
  functorial in $M$ and $M'$ if they exist \cite[4.2.1.28,
  4.2.1.31]{lurie-higheralgebra}.
\item As a result, a lax monoidal functor $f\co \CC \to \DD$ with a
  compatible lax functor $g\co \MM \to \NN$ of enriched
  categories induces canonical maps $f (F_\MM(M,M')) \to F_\NN(gM, gM')$.
\item If function objects exist, then for any fixed $M$ the
  endomorphism object $\End(M) = F_\MM(M,M)$ has a canonical algebra
  structure, and the category $\LMod(\MM) \times_{\MM} \{M\}$ of
  algebras acting on $M$ is equivalent to the category of algebras
  $\Alg(\CC)_{/ \End(M)}$ of algebras with a map $A \to \End(M)$
  \cite[4.7.1.40, 4.7.1.41]{lurie-higheralgebra}.
\item As a result, a lax monoidal functor $f\co \CC \to \DD$ with a
  compatible lax functor $g\co \MM \to \NN$ of enriched categories
  induces canonical maps of algebras $f (\End(M)) \to \End(gM)$,
  compatible with the $\End(M)$-module structure on $M$ and the
  $\End(gM)$-module structure on $\End(gM)$.
\end{itemize}

\section{Presentable categories and Yoneda embeddings}
\label{sec:presentable}

We will now discuss some results on presentable $\infty$-categories
and their tensor product.

\begin{itemize}
\item There exist an $\infty$-category $\Pr^L$ (resp. $\Pr^R$), whose
  objects are presentable $\infty$-categories and colimit-presenting
  (resp. accessible and limit-preserving) functors. The assignment
  that takes a colimit-preserving functor to a limit-preserving
  right adjoint gives an antiequivalence between these categories:
  $\Fun^L(\CC, \DD) \cong \Fun^R(\DD,\CC)$ \cite[5.5.3.1,
  5.5.3.4]{lurie-htt}.
\item There is a symmetric monoidal structure on $\Pr^L$ such that
  functors $\CC \otimes \DD \to \EE$ are equivalent to functors
  $\CC \times \DD \to \EE$ that preserve colimits in each variable
  separately, and there is an equivalence
  $\Ups\co \CC \otimes \DD \cong \Fun^R(\CC^\op, \DD)$ \cite[4.8.1.15,
  4.8.1.17]{lurie-higheralgebra}.
\item A monoidal presentable category $\CC$ is equivalent to
  a monoid in $\Pr^L$, and a presentable category $\MM$ left-tensored over
  $\CC$ is equivalent to a left module over $\CC$ in $\Pr^L$. Under
  these conditions, $\MM$ is enriched over $\CC$
  \cite[4.2.1.33]{lurie-higheralgebra}.
\item For a monoidal presentable $\infty$-category $\CC$, an
  associative algebra $A$ in $\CC$, and a presentable
  $\infty$-category $\MM$ left-tensored over $\CC$, there is an
  equivalence
  \[
    \LMod_A(\CC) \otimes_\CC \MM \too{\sim} \LMod_A(\MM)
  \]
  compatible with the underlying equivalence
  $\CC \otimes_\CC \MM \to \MM$
  \cite[4.8.4.6]{lurie-higheralgebra}. On generators, this sends a
  left $A$-module $L$ in $\CC$ and an object $M$ in $\MM$ to the left
  $A$-module $L \otimes M$ in $\MM$.
\item In particular, if $\DD$ is presentable, we can take
  $\MM = \CC \otimes \DD$ and get a natural commutative diagram
  \[
    \xymatrix{
      \LMod_A(\CC \otimes \DD) \ar[d] &
      \LMod_A(\CC) \otimes \DD \ar[r] \ar[l] \ar[d] &
      \Fun^R(\DD^\op, \LMod_A(\CC)) \ar[d] \\
      \CC \otimes \DD &
      \CC \otimes \DD \ar[r] \ar[l] &
      \Fun^R(\DD^\op, \CC)
    }
  \]
  where the horizontal maps are equivalences. This allows us to
  construct a natural equivalence
  \[
    \LMod_A(\CC \otimes \DD) \to \Fun^R(\DD^\op, \LMod_A(\CC))
  \]
  lifting the equivalence $\CC \otimes \DD \to \Fun^R(\DD^\op,
  \CC)$. We refer to this as the \emph{Yoneda embedding for left
    $A$-modules}.
\item For a small $\infty$-category $S$, there is a presheaf
  $\infty$-category $\PP(S) = \Fun(S^\op, \Spa)$ which is presentable.
  For any set $\KK$ of simplicial sets such that $S$ has $\KK$-indexed
  colimits, $\PP(S)$ has a presentable subcategory
  $\PP^\KK(S) = \Fun^\KK(S^\op, S)$ of contravariant functors that
  take $\KK$-indexed colimits to $\KK$-indexed limits \cite[5.1.0.1,
  5.3.6.2, 5.5.1.1]{lurie-htt}.
\item There is a fully faithful Yoneda embedding $S \to \PP(S)$ that
  preserves all limits that exist in $S$, and the Yoneda embedding
  factors through $\PP^\KK(S)$ \cite[5.1.3.1,
  5.1.3.2]{lurie-htt}. This sends an object $X$ to a model for the
  functor $\Map_S(-,X)$.
\item These presheaf categories have the universal property that, for
  any presentable $\infty$-category $\CC$, we have
  \[
    \Fun^L(\PP^\KK(S), \CC) \cong \Fun^\KK(S, \CC)
  \]
  \cite[5.1.5.6, 5.3.6.2]{lurie-htt}.  This universal property gives
  rise to the identity
  \[
    \CC \otimes \PP^\KK(S) \cong \Fun^\KK(S^\op, \CC).
  \]
\item For a monoidal $\infty$-category $\CC$, applying the Yoneda
  embedding for left $A$-modules to $\PP^\KK(S)$  produces a natural
  equivalence
  \[
    \Ups_A\co \LMod_A(\Fun^\KK(S^\op, \CC)) \to \Fun^\KK(S^\op, \LMod_A(\CC))
  \]
  over $\Fun^\KK(S^\op, \CC)$. In short, a left $A$-module structure on
  a functor is equivalent data to a lift of the functor to left
  $A$-modules.
\end{itemize}

\section{The spectral Yoneda functor}
\label{sec:yoneda}

We can specialize the constructions and results of
\S\ref{sec:presentable} to the category of spectra. There are several
important properties of spectra that make it simplify there.

\begin{itemize}
\item For a category $S$ with pullbacks and a terminal object, there
  is a category
  \[
    \Sp(S) =
    \lim (\cdots \to S_* \overset{\Omega_\CC} \to S_*
    \overset{\Omega_\CC} \to S_* \overset{\Omega_\CC}\to \cdots)
  \]
  of ($\Omega$-)spectrum objects in $\CC$. In particular,
  \[
    \Map_{\Sp(S)}(\{X_n\}, \{Y_n\})
    \simeq \lim_n \Map_{S_*}(X_n,Y_n)
  \]
  in this category.
\item The category $\Sp$ of spectra is presentable, and for any
  presentable category $\CC$ there is an equivalence of the tensor
  product $\Sp \otimes \CC$ with the category
  \[
    \Sp(\CC) =
    \lim (\cdots \to \CC_* \overset{\Omega_\CC} \to \CC_* \overset{\Omega_\CC} \to \CC_* \overset{\Omega_\CC}\to \cdots)
  \]
  of spectrum objects in $\CC$
  \cite[4.8.1.13]{lurie-higheralgebra}. In particular, we refer to the
  equivalence
  \[
    \Ups\co \Sp(\CC) \to \Fun^R(\CC^\op, \Sp)
  \]
  as the \emph{spectral Yoneda embedding}.
\item The category $\Sp$ is idempotent under $\otimes$, and a
  presentable $\infty$-category $\EE$ is a stable presentable
  $\infty$-category if and only if $\EE \to \Sp \otimes \EE$ is an
  equivalence \cite[4.8.2.18]{lurie-higheralgebra}.
\item If $S$ is a small $\infty$-category that has pullbacks and a
  terminal object, the Yoneda embedding extends to a fully faithful
  embedding
  \[
    \Sp(S) \to \Sp(\PP(S)) \simeq \Fun(S^\op, \Sp).
  \]
  If $S$ has $\KK$-indexed colimits, this naturally factors through
  the full subcategory $\Fun^\KK(S^\op, \Sp)$.
\item Fix an associative algebra $A$ in $\Sp$. There is a full
  subcategory
  \[
    \LMod_A(\Sp(S)) \subset \LMod_A(\Fun(S^\op, \Sp)) \simeq \Fun(S^\op, \LMod_A)
  \]
  of left $A$-modules in the presheaf category whose underlying
  spectrum object lies in $\Sp(S)$. If $S$ has $\KK$-filtered
  colimits, this naturally factors through the full subcategory
  $\Fun^\KK(S^\op, \LMod_A)$.
\end{itemize}

We can assemble this into the following statement.

\begin{thm}
  \label{thm:representableaction}
  Fix an algebra $A \in \Sp$. For a small $\infty$-category $S$ which
  has pullbacks, terminal objects, and $\KK$-indexed colimits, there
  is a natural pullback diagram
  \[
    \xymatrix{
      \LMod_A(\Sp(S)) \ar[r]^-{\Ups_A} \ar[d] &
      \Fun^\KK(S^\op, \LMod_A) \ar[d] \\
      \Sp(S) \ar[r]_-\Ups&
      \Fun^\KK(S^\op, \Sp)
    }
  \]
  where the horizontal maps are fully faithful embeddings. In
  particular, a left $A$-module structure on a spectrum object $Y$
  is equivalent to a lift of the functor $F(-,Y)$ from spectra to left
  $A$-modules.
\end{thm}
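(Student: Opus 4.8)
The plan is to assemble the statement from the bulleted facts recalled in \S\ref{sec:presentable} and \S\ref{sec:yoneda}, specialized to $\CC = \Sp$. The key observation is that the general Yoneda embedding for left $A$-modules, applied with $\CC = \Sp$ and the presheaf category $\PP^\KK(S)$, already produces an equivalence
\[
  \Ups_A\co \LMod_A(\Fun^\KK(S^\op, \Sp)) \too{\sim} \Fun^\KK(S^\op, \LMod_A)
\]
over $\Fun^\KK(S^\op, \Sp)$, using that $\Sp \otimes \PP^\KK(S) \cong \Fun^\KK(S^\op,\Sp)$ and that $\Sp$ is idempotent (so $\LMod_A(\Sp)$ computes $A$-modules in spectra). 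So the content is not the equivalence at the level of presheaf categories, which is already in hand, but rather the identification of the full subcategory of \emph{spectrum objects} $\Sp(S) \subset \Fun^\KK(S^\op,\Sp)$ with a full subcategory of $\LMod_A$-valued functors, and the claim that this identification is compatible with $\Ups_A$, i.e.\ that the square is a pullback.

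First I would record that the bottom horizontal map $\Ups$ is the spectral Yoneda embedding of \S\ref{sec:yoneda}: since $S$ has pullbacks, a terminal object, and $\KK$-indexed colimits, the Yoneda embedding $S \to \PP(S)$ extends to a fully faithful embedding $\Sp(S) \to \Sp(\PP(S)) \simeq \Fun(S^\op,\Sp)$ that factors through $\Fun^\KK(S^\op,\Sp)$, and this is exactly $\Ups$ restricted to spectrum objects. Second, I would define $\LMod_A(\Sp(S))$ as in the last bullet of \S\ref{sec:yoneda}: the full subcategory of $\LMod_A(\Fun(S^\op,\Sp)) \simeq \Fun(S^\op,\LMod_A)$ consisting of those $A$-module-valued functors whose underlying spectrum-valued functor lies in $\Sp(S)$. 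By construction this is precisely the fibre product $\LMod_A(\Fun^\KK(S^\op,\Sp)) \times_{\Fun^\KK(S^\op,\Sp)} \Sp(S)$, so the square is a pullback essentially by definition; the work is to check that the top map factors as claimed and is fully faithful. Full faithfulness of the top map is then automatic: it is a pullback of the fully faithful bottom map $\Ups$ along the functor $\Fun^\KK(S^\op,\LMod_A) \to \Fun^\KK(S^\op,\Sp)$, and fully faithful maps are stable under pullback, or alternatively it is the restriction to a full subcategory of the equivalence $\Ups_A$ at the presheaf level. The ``in particular'' clause follows by taking $Y \in \Sp(S)$ and tracing through the pullback: lifts of $\Ups(Y) = F(-,Y)$ to $\LMod_A$-valued functors that land in $\Fun^\KK(S^\op,\LMod_A)$ correspond, via the pullback, to $A$-module structures on $Y$ in $\Sp(S)$, and an $A$-module structure on the spectrum object $Y$ is the same as one in $\Sp(S)$ since $\Sp(S)$ is a full subcategory closed under the relevant limits.

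The step I expect to be the main obstacle is the compatibility check: verifying that $\Ups_A$ restricted to $\LMod_A(\Sp(S))$ genuinely lands in $\Fun^\KK(S^\op,\LMod_A)$ and agrees over $\Sp(S)$ with the restriction of $\Ups$, i.e.\ that the commutative diagram
\[
  \xymatrix{
    \LMod_A(\Sp(\PP(S))) \ar[r]^-{\Ups_A} \ar[d] &
    \Fun^\KK(S^\op,\LMod_A) \ar[d] \\
    \Sp(\PP(S)) \ar[r]_-{\Ups} &
    \Fun^\KK(S^\op,\Sp)
  }
\]
from the ``in particular'' bullet of \S\ref{sec:presentable} restricts correctly. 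This is really a matter of chasing through the construction of $\Ups_A$ as the Yoneda embedding for $A$-modules built from the equivalences $\LMod_A(\CC \otimes \DD) \simeq \LMod_A(\CC) \otimes \DD \simeq \Fun^R(\DD^\op, \LMod_A(\CC))$, all compatible with the underlying $\CC \otimes \DD \simeq \Fun^R(\DD^\op,\CC)$; the only subtlety is that the $\KK$-preservation condition cutting out $\Fun^\KK$ from $\Fun$ is detected on underlying spectra, because the forgetful functor $\LMod_A \to \Sp$ preserves and detects both limits and colimits, so a presheaf valued in $\LMod_A$ takes $\KK$-colimits to $\KK$-limits if and only if its underlying spectrum-valued presheaf does. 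Granting that, everything assembles formally and no genuine calculation is required.
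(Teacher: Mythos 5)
Your proposal is correct and takes essentially the same route as the paper: the paper gives no separate proof, presenting the theorem as the assembly of the bullets of \S\ref{sec:presentable} and \S\ref{sec:yoneda}, and your argument carries out exactly that assembly (the square is a pullback by the paper's definition of $\LMod_A(\Sp(S))$ as a full subcategory of $\LMod_A(\Fun(S^\op,\Sp))$, full faithfulness of the top map is inherited from the presheaf-level equivalence $\Ups_A$, and the $\KK$-condition is detected on underlying spectra via the conservative, limit- and colimit-preserving forgetful functor $\LMod_A \to \Sp$).
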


\begin{prop}
  \label{prop:spectralnaturality}
  The spectral Yoneda embedding is natural in the following
  senses.
  \begin{enumerate}
  \item Suppose $S$ and $T$ are categories with pullbacks and terminal
    objects and $g\co S \to T$ is a functor with left adjoint $f$, and
    that $S$ and $T$ have $\KK$-indexed colimits.  Then there is an
    induced limit-preserving functor $g\co \Sp(S) \to \Sp(T)$ with a
    commutative diagram
    \[
      \xymatrix{ \Sp(S) \ar[r]^-\Ups \ar[d]_{g} &
        \Fun^\KK(S^\op, \Sp) \ar[d]^{f^*} \\
        \Sp(T) \ar[r]_-\Ups & \Fun^\KK(T^\op, \Sp).  }
    \]
  \item Suppose $A$ is an associative algebra in $\Sp$. Then the above
    lifts to an essentially unique commutative diagram
    \[
      \xymatrix{ \LMod_A(\Sp(S)) \ar[r]^-{\Ups_A} \ar[d] &
        \Fun^\KK(S^\op, \LMod_A) \ar[d]^{f^*} \\
        \LMod_A(\Sp(T)) \ar[r]_-{\Ups_A} & \Fun^\KK(T^\op, \LMod_A).  }
    \]
  \item Suppose $A \to B$ is a map of associative algebras in
    $\Sp$. Then the forgetful maps from $B$-modules to $A$-modules are
    part of a commutative diagram
    \[
      \xymatrix{
        \LMod_B(\Sp(S)) \ar[r]^-{\Ups_B} \ar[d] &
        \Fun^\KK(S^\op, \LMod_B) \ar[d] \\
        \LMod_A(\Sp(S)) \ar[r]_-{\Ups_A} & \Fun^\KK(S^\op, \LMod_A).
      }
    \]
  \end{enumerate}
\end{prop}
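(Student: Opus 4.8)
The proof plan is to reduce all three naturality statements to formal consequences of the constructions recalled in \S\ref{sec:presentable}, since each part of the spectral Yoneda embedding was built by specializing a diagram of presentable categories and colimit-preserving functors. For part (1), I would first observe that a functor $g\co S \to T$ with left adjoint $f$ induces a colimit-preserving functor $f_* \co \PP^\KK(T) \to \PP^\KK(S)$ given by restriction along $f^\op$ (equivalently, left Kan extension along $g^\op$), together with its right adjoint; tensoring with $\Sp$ and using the identification $\Sp \otimes \PP^\KK(S) \cong \Fun^\KK(S^\op, \Sp)$ produces the functor $f^*$ on the right-hand column. On the left-hand column, $g \co \Sp(S) \to \Sp(T)$ is the functor on spectrum objects induced levelwise by the limit-preserving $g \co S \to T$, using the description of $\Sp(S)$ as the limit of the $\Omega$-tower. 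The commuting square is then the $\Sp$-tensoring of the defining square for the (unstable) Yoneda embedding, so it commutes by naturality of the equivalence $\CC \otimes \PP^\KK(S) \cong \Fun^\KK(S^\op, \CC)$ in $S$; the key point to check is that the Yoneda embedding $S \to \PP^\KK(S)$ is natural in $S$ against adjoint pairs, which is exactly the statement that $\Map_S(-, X)$ pulls back correctly, i.e. $\Map_S(f(-), X) \simeq \Map_T(-, g X)$.

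For part (2), I would apply the Yoneda embedding for left $A$-modules from \S\ref{sec:presentable}, which produced $\Ups_A \co \LMod_A(\Fun^\KK(S^\op, \CC)) \to \Fun^\KK(S^\op, \LMod_A(\CC))$ over $\Fun^\KK(S^\op, \CC)$ by applying $\LMod_A(-)$ to the commuting square of presentable categories witnessing the ordinary Yoneda embedding. Since $\LMod_A(-)$ is a functor on $\Pr^L$ (or more precisely on modules over $\Sp$ in $\Pr^L$), applying it to the naturality square from part (1) — which lives in $\Pr^L$ — yields the desired commuting square with $\Ups_A$ in the rows and the induced $f^*$ on $A$-modules on the right. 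The restriction to the subcategories $\LMod_A(\Sp(S)) \subset \LMod_A(\Fun^\KK(S^\op, \Sp))$ of $A$-modules whose underlying object is a spectrum object is compatible with $g$ because $g$ preserves spectrum objects, so the square of part (1) restricts. Essential uniqueness of the lift follows because the forgetful functor $\LMod_A(\Sp(S)) \to \Sp(S)$ is conservative and the target square of part (1) already commutes, so the space of fillers is contractible by the pullback description in Theorem~\ref{thm:representableaction}.

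For part (3), a map $A \to B$ of associative algebras in $\Sp$ induces the restriction-of-scalars functor $\LMod_B(\CC) \to \LMod_A(\CC)$ naturally in the presentable $\CC$ (and in $\Pr^L$), so applying this natural transformation to the defining square of the Yoneda embedding for left $B$- and $A$-modules gives the compatibility square. Again one restricts along the inclusions of the subcategories of spectrum objects, which is harmless since restriction of scalars does not change the underlying spectrum object. The main obstacle I anticipate is bookkeeping rather than mathematical depth: one must be careful that all of these constructions — the induced functors on presheaf categories, the functoriality of $\LMod_A(-)$ and of restriction of scalars, and the passage to the full subcategories cut out by the spectrum-object condition — are assembled coherently as maps in $\Pr^L$ or over the relevant base, rather than merely at the level of homotopy categories, so that "commutative diagram" is understood in the $\infty$-categorical sense with all higher coherences supplied; the cleanest route is to phrase everything as the application of $\infty$-functors ($-\otimes\Sp$, $\LMod_A(-)$, restriction of scalars) to a single naturality square living in $\Pr^L$, after which all three parts follow formally.
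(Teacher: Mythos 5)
Your proposal is correct but routes through more machinery than the paper does. The paper's proof is purely object-level: since $g$ is a right adjoint it preserves pullbacks and the terminal object, hence induces $g\co\Sp(S)\to\Sp(T)$ levelwise, and then $\Ups(gY)\simeq(\Ups Y)\circ f$ is read off directly from the adjunction equivalence $\Map_T(X,gY_n)\simeq\Map_S(fX,Y_n)$; parts (2) and (3) then follow at once from Theorem~\ref{thm:representableaction}, because a lift of the represented functor to left $A$-modules \emph{is} an $A$-module structure on the representing spectrum object (so $\Ups_A(Y)\circ f$ itself exhibits $gY$ as an $A$-module), and forgetting along $A\to B$ is visibly compatible. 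You instead build everything in $\Pr^L$ and transport an ambient naturality square through $-\otimes\Sp$, $\LMod_A(-)$, and restriction of scalars; this gives a cleaner account of higher coherence, but the same adjunction identity is still the irreducible input, since it is exactly what guarantees that the ambient presentable square restricts to the small full subcategories of spectrum objects. Two slips to repair: (i) the functor you call $f_*\co\PP^\KK(T)\to\PP^\KK(S)$ is described as restriction along $f^\op$, but that functor goes from $\PP^\KK(S)$ to $\PP^\KK(T)$, and it is the \emph{right}, not left, Kan extension along $g^\op$; what you actually need on the right-hand column is just $f^*=\text{precomposition with }f$, which is what you end up using. (ii) Your closing claim that all three squares arise by applying functors to a single naturality square living in $\Pr^L$ cannot be taken literally, since $\Sp(S)$ is small rather than presentable; the argument must be phrased, as you do earlier in the proposal, as restricting an ambient square of presentable categories to the full subcategories $\LMod_A(\Sp(S))\subset\LMod_A(\Fun^\KK(S^\op,\Sp))$.
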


\begin{proof}
  Since $g$ is a right adjoint, it preserves pullbacks and the
  terminal object and thus induces a functor $g\co \Sp(S) \to
  \Sp(T)$. Under the spectral Yoneda embedding, for a spectrum $Y =
  \{Y_n\}$ in $\Sp(S)$ the spectrum $gY$ represents the functor $T^\op
  \to \Sp$ given by
  \[
    X \mapsto \{\Map_T(X, g Y_n)\} \simeq \{\Map_S(fX, Y_n)\}.
  \]
  This makes the functors $\Ups(gY)$ and $(\Ups Y) \circ f$ naturally
  equivalent, as desired.

  Suppose $A$ is an associative algebra in $\Sp$. For a left
  $A$-module $Y$, the object $gY \in \Sp(T)$ represents a functor
  $\Ups(gY) \simeq \Ups(Y) \circ f$ with a chosen lift
  $\Ups_A(Y) \circ f$ from $T^\op$ to left $A$-modules, and is thus
  equivalent to a left $A$-module in $\Sp(T)$.

  Suppose $A \to B$ is a map of associative algebras in $\Sp$. Then
  the final assertion is that the functor to left $B$-modules,
  represented by $Y$, forgets to the same functor to left
  $A$-modules as the underlying left $A$-module of $Y$.
\end{proof}  

\section{Stable endomorphisms}
\label{sec:stable-end}

In this section we will briefly indicate how to determine function
objects in categories of spectra. In particular, we would like to
determine information about the homotopy groups of the endomorphism
ring $\End_{\Sp(\CC)}(Y)$, as well as discuss the relationship between
these elements and stable cohomology operations.

\begin{prop}
  \label{prop:functionspectrum}
  Suppose $\CC$ is a presentable $\infty$-category and $X,Y \in
  \Sp(\CC)$. Then the function object $F_{\Sp(\CC)}(X,Y) \in \Sp$
  is a spectrum whose $k$'th term is equivalent to a homotopy limit
  \[
    \xymatrix{
      \holim_n \Map_{\CC_*}(X_{n-k}, Y_n).
    }
  \]
\end{prop}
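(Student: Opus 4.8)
The plan is to reduce the computation of the function spectrum $F_{\Sp(\CC)}(X,Y)$ to the already-described mapping spaces in $\Sp(\CC)$ and then to the pointed mapping spaces in $\CC_*$. Recall that, by definition, for a spectrum $Z \in \Sp$ we have $\Map_{\Sp}(Z, F_{\Sp(\CC)}(X,Y)) \simeq \Map_{\Sp(\CC)}(Z \otimes X, Y)$, where $Z \otimes X$ denotes the tensoring of $\Sp(\CC)$ over $\Sp$ (which exists because $\Sp(\CC) \simeq \Sp \otimes \CC$ is a stable presentable, hence $\Sp$-tensored, $\infty$-category). To identify the $k$'th term of the spectrum $F_{\Sp(\CC)}(X,Y)$, I would evaluate against the representing objects: the $k$'th term of any spectrum $W$ is $\Omega^{\infty}$ of its $(-k)$-fold shift, or equivalently $\Map_{\Sp}(\Sigma^{\infty-k} S^0, W) = \Map_{\Sp}(\mathbb{S}[-k], W)$ up to the usual indexing conventions. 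So the $k$'th term of $F_{\Sp(\CC)}(X,Y)$ has underlying space $\Map_{\Sp}(\mathbb{S}[k], F_{\Sp(\CC)}(X,Y)) \simeq \Map_{\Sp(\CC)}(\mathbb{S}[k] \otimes X, Y) \simeq \Map_{\Sp(\CC)}((X[k]), Y)$, where $X[k]$ is the shift of the spectrum object $X$.

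Next I would compute $\Map_{\Sp(\CC)}(X[k], Y)$ using the formula already recorded in \S\ref{sec:yoneda}: for spectrum objects $\{A_n\}, \{B_n\}$ in $\CC$ one has $\Map_{\Sp(\CC)}(\{A_n\}, \{B_n\}) \simeq \holim_n \Map_{\CC_*}(A_n, B_n)$. The shift $X[k]$ of a spectrum object has $n$'th term (up to reindexing conventions) $X_{n-k}$, since shifting a spectrum object reindexes its terms. Therefore $\Map_{\Sp(\CC)}(X[k], Y) \simeq \holim_n \Map_{\CC_*}(X_{n-k}, Y_n)$, which is precisely the claimed description of the $k$'th term. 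The identification of terms is compatible with the structure maps of the spectrum $F_{\Sp(\CC)}(X,Y)$ because both the tensoring $-\otimes X$ and the Yoneda-type mapping-space formula are functorial, and the bonding maps on one side correspond to the bonding maps on the other under the shift equivalences $\Sigma(X[k]) \simeq X[k+1]$.

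The main obstacle I anticipate is purely one of bookkeeping the indexing and shift conventions: one must be careful that "the $k$'th term" of a spectrum, the tensoring $\mathbb{S}[k] \otimes X$, and the shift $X[k]$ of an $\Omega$-spectrum object all line up so that $X_{n-k}$ (and not, say, $X_{n+k}$) appears. Concretely, for an $\Omega$-spectrum object $X = \{X_n\}$ the shift $X[1]$ should satisfy $X[1]_n \simeq X_{n+1}$, whence $X[k]_n \simeq X_{n+k}$, and the appearance of $X_{n-k}$ forces a corresponding sign in the identification of the $k$'th term with $\Map_{\Sp}(\mathbb{S}[-k], -)$; getting these compatible is the only subtlety. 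A secondary point to check is that the tensoring of $\Sp(\CC)$ over $\Sp$ really does send $\mathbb{S}[k] \otimes X$ to the shifted spectrum object $X[k]$, which follows from the equivalence $\Sp(\CC) \simeq \Sp \otimes \CC$ being a map of $\Sp$-modules together with the fact that $\mathbb{S}[k] \otimes -$ is the shift functor on $\Sp$ itself. Everything else is a direct application of the definition of function objects together with Proposition~\ref{prop:functionspectrum}'s input, the mapping-space formula for $\Sp(\CC)$ already recalled above.
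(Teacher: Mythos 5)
Your proof is correct and follows essentially the same route as the paper: identify the $k$'th space as $\Map_{\Sp}(S^{-k}, F(X,Y))$, move $S^{-k}$ across the defining tensor--hom adjunction for the function object, and reduce to pointed mapping spaces in $\CC_*$. The only real difference is the middle step --- the paper decomposes $X \simeq \hocolim_n S^{k-n}\otimes \Sigma^\infty X_{n-k}$ and applies the levelwise $\Sigma^\infty$-adjunction, whereas you invoke the formula $\Map_{\Sp(\CC)}(\{A_n\},\{B_n\})\simeq \holim_n \Map_{\CC_*}(A_n,B_n)$ recorded in \S\ref{sec:yoneda} together with the identification of $S^{-k}\otimes X$ as the shift with $n$'th term $X_{n-k}$ --- and both yield the same homotopy limit, with the indexing issues you flag resolving exactly as you predict.
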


\begin{proof}
  The function object $F(X,Y)$ has the defining property that maps
  $W \to F(X,Y)$ are equivalent to maps $W \otimes X \to
  Y$. Therefore, we can determine the structure of the spaces in the
  $\Omega$-spectrum defining $F(X,Y)$:
  \begin{align*}
    F(X,Y)_k &\simeq \Map_{\Sp}(S^{-k}, F(X,Y)) \\
        &\simeq \Map_{\Sp(\CC)}(S^{-k} \otimes X, Y)\\
        &\simeq \Map_{\Sp(\CC)}(S^{-k} \otimes (\hocolim_n S^{k-n} \otimes
          \Sigma^\infty X_{n-k}), Y)\\
        &\simeq \holim_n \Map_{\Sp(\CC)}(\Sigma^{\infty} X_{n-k}, S^{n}
          \otimes Y)\\
        &\simeq \holim_n \Map_{\CC_*}(X_{n-k}, Y_{n}).\qedhere
  \end{align*}
\end{proof}

\begin{rmk}
  Just as in the ordinary case, stable endomorphisms enjoy a
  compatibility with Mayer--Vietoris sequences. Suppose that $Y$
  is in $\Sp(\CC)$ and that we have a homotopy pushout diagram
  \[
    \xymatrix{
      A \ar[r] \ar[d] & B \ar[d] \\
      C \ar[r] & D
    }
  \]
  of objects in $\CC$. Then we obtain a homotopy pullback
  diagram
  \[
    \xymatrix{
      F(A,Y) & F(B,Y) \ar[l] \\
      F(C,Y) \ar[u] & F(D,Y) \ar[l] \ar[u]
    }
  \]
  of spectra, inducing a natural Mayer--Vietoris sequence:
  \[
    \xymatrix{
      \cdots \ar[r] &
      [\Sigma^\infty D, Y] \ar[r] &
      [\Sigma^\infty B, Y] \oplus
      [\Sigma^\infty C, Y] \ar[r] &
      [\Sigma^\infty A, Y] \ar[r] &
      \cdots\\
    }
  \]
  We know that the pullback diagram of spectra lifts to a diagram of
  left modules over $\End(Y)$; it is still a pullback diagram
  there. As a result, the natural action of the coefficient ring
  $\End(Y)_*$ on $[-,Y]$ is compatible with the connecting
  homomorphism in the Mayer--Vietoris sequence.
\end{rmk}

\begin{defn}
  For any small category $S$ and any functor $F\co S^\op \to \Sp$, let
  $\End(F)$ be the enriched endomorphism object of $F$ in $\Fun(S^\op,
  \Sp)$.
\end{defn}

\begin{prop}
  For any small category $S$ and any functor $F\co S^\op \to \Sp$, the
  functor $F$ has a natural lift to the category of left
  $\End(F)$-modules.

  If $S$ has pullbacks and a terminal object and $F$ is representable
  by a spectrum object $Y \in \Sp(S)$, we have a canonical
  identification
  \[
    \End(F) \cong \End_{\Sp(S)}(Y).
  \]
\end{prop}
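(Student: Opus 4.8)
The plan is to deduce both statements from the machinery already assembled, specializing the ``Yoneda embedding for left $A$-modules'' and the functoriality of endomorphism objects recalled in \S\ref{sec:endomorphisms} to the presentable category $\Fun(S^\op,\Sp)$. For the first claim, observe that $\Fun(S^\op,\Sp)$ is a presentable $\infty$-category which is tensored over itself (indeed over $\Sp$), so it is enriched over itself; hence for any object $F$ the endomorphism object $\End(F) = F_{\Fun(S^\op,\Sp)}(F,F)$ carries a canonical associative algebra structure, and by \cite[4.7.1.40, 4.7.1.41]{lurie-higheralgebra} the object $F$ itself acquires a canonical left $\End(F)$-module structure in $\Fun(S^\op,\Sp)$. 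But $\End(F)$, being an algebra in the presheaf category $\Fun(S^\op,\Sp)$, is computed pointwise, and the unit $\mb S \to \End(F)$ picks out a map of algebras. Composing with the monoidal functor $\Sp \to \Fun(S^\op,\Sp)$ given by $E \mapsto \underline{E}$ (the constant presheaf) — or rather, restricting scalars along it — is not quite what we want; instead the cleanest route is to note that a left module over the algebra object $\End(F)$ in $\Fun(S^\op,\Sp)$ is, via the equivalence $\LMod_A(\Fun(S^\op,\Sp)) \simeq \Fun(S^\op,\LMod_A)$ used repeatedly in \S\ref{sec:presentable} (with the running algebra now being $\End(F)$ viewed appropriately), precisely a lift of $F$ to a functor $S^\op \to \LMod_{\End(F)}$. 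This gives the desired natural lift.

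Let me be more careful about what ``left $\End(F)$-module'' means here, since $\End(F)$ is not an algebra in $\Sp$ but in $\Fun(S^\op,\Sp)$. The correct statement is internal: $F$ is a left module over $\End(F)$ \emph{in the category $\Fun(S^\op,\Sp)$}, and this is the lift asserted — a module in the presheaf category is the data of a pointwise-compatible family, which is what a lift along the forgetful functor $\LMod \to \Sp$ of presheaf categories records. So for the first paragraph the key input is exactly the last two bullets of \S\ref{sec:endomorphisms}, applied with $\MM = \CC = \Fun(S^\op,\Sp)$ and $M = F$: function objects exist because the category is presentable and self-tensored, hence enriched over itself by \cite[4.2.1.33]{lurie-higheralgebra}.

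For the second claim, suppose $S$ has pullbacks and a terminal object and $F \simeq \Ups(Y)$ for $Y \in \Sp(S)$, where $\Ups\co \Sp(S) \to \Fun^R(S^\op,\Sp)$ is the spectral Yoneda embedding, which is fully faithful. The identification $\End(F) \cong \End_{\Sp(S)}(Y)$ should follow from Proposition~\ref{prop:functionspectrum} together with the explicit formula for mapping spectra in $\Fun(S^\op,\Sp)$. On one side, $\End_{\Sp(S)}(Y) = F_{\Sp(\CC)}(Y,Y)$ with $\CC = \PP(S)$ has $k$'th space $\holim_n \Map_{\PP(S)_*}((\PP Y)_{n-k}, (\PP Y)_n)$ by Proposition~\ref{prop:functionspectrum}, where $\PP Y$ denotes the image of $Y$ under the (pointed) Yoneda embedding into spectrum objects of presheaves. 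On the other side, $\End(F)$ as an enriched endomorphism object in $\Fun(S^\op,\Sp)$ has its $k$'th space computed as $\Map_{\Fun(S^\op,\Sp)}(\Sigma^\infty S^{-k} \wedge F, F) \simeq \Map_{\Fun(S^\op,\Sp)}(\Sigma^{-k}F, F)$, and since $F = \Ups(Y)$ is a limit-preserving presheaf and $\Ups$ is fully faithful and compatible with the tensoring over $\Sp$ (because $\Ups$ identifies $\Sp(\PP(S))$ with $\Sp \otimes \PP(S)$-module structure, cf.\ \S\ref{sec:yoneda}), this mapping space matches. Concretely: $\Sp(S)$ is a full subcategory of $\Sp(\PP(S))$ closed under the relevant tensorings and cotensorings by $\Sp$, so $F_{\Sp(\PP(S))}(\PP Y, \PP Y)$ already lies in and agrees with $F_{\Sp(S)}(Y,Y)$, while the former, pushed through $\Ups$, is by definition the enriched endomorphism object of the representable functor, i.e.\ $\End(F)$. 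The algebra structures agree because the fully faithful embedding $\Sp(S) \hookrightarrow \Fun(S^\op,\Sp)$ is compatible with function objects, hence induces a map of algebras $\End_{\Sp(S)}(Y) \to \End(\Ups Y)$ by the last bullet of \S\ref{sec:endomorphisms}, which we have just argued is an equivalence on underlying spectra.

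The main obstacle I anticipate is the bookkeeping in the second paragraph: making precise the claim that the fully faithful spectral Yoneda embedding $\Sp(S) \hookrightarrow \Sp(\PP(S)) \simeq \Fun(S^\op,\Sp)$ is compatible with the formation of function objects, i.e.\ that $F_{\Sp(S)}(X,Y) \simeq F_{\Sp(\PP(S))}(\PP X, \PP Y)$ as algebras when $X = Y$. This is not automatic from full faithfulness alone — one needs that $\PP(S)$ is tensored over $\Sp(S)$ compatibly, or more simply that the Yoneda embedding $S \to \PP(S)$ preserves the limits defining spectrum objects and that $\Sp(S)$ sits inside $\Sp(\PP(S))$ as a full subcategory closed under the $\Sp$-tensoring up to the colimit presentation of $S^{-k}$-smash used in the proof of Proposition~\ref{prop:functionspectrum}. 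Once this compatibility is in hand, everything else is formal naturality of endomorphism objects along lax monoidal functors.
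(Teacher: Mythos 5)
Your overall route---the tautology that an object is a module over its own endomorphism algebra, plus the fact that the (enriched) Yoneda embedding preserves endomorphism objects---is the same as the paper's, which disposes of both claims in two sentences. But there is a genuine error in your setup of the first claim: you decide that $\End(F)$ is an algebra object of $\Fun(S^\op,\Sp)$ (via self-enrichment) rather than an algebra in $\Sp$, and you state this explicitly (``$\End(F)$ is not an algebra in $\Sp$ but in $\Fun(S^\op,\Sp)$''). The enrichment intended here is the one recalled in \S\ref{sec:presentable}: $\Fun(S^\op,\Sp)\simeq \Sp\otimes\PP(S)$ is a presentable module over $\Sp$ in $\Pr^L$, hence enriched over $\Sp$, so $\End(F)=F_{\Fun(S^\op,\Sp)}(F,F)$ is a ring \emph{spectrum}. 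This is forced by the statement itself: $\End_{\Sp(S)}(Y)$ is a spectrum (Proposition~\ref{prop:functionspectrum}), and the asserted lift is to $\Fun(S^\op,\LMod_{\End(F)})$ with $\LMod_{\End(F)}$ meaning modules in $\Sp$, exactly as in Theorem~\ref{thm:representableaction} (``Fix an algebra $A\in\Sp$''). Your misplacement is not cosmetic: the equivalence $\LMod_A(\Fun(S^\op,\Sp))\simeq\Fun(S^\op,\LMod_A)$ that you invoke to produce the lift is only available for $A$ an algebra in the base $\Sp$; for an algebra object of the presheaf category it does not apply, and a module over such an object is pointwise data that does not assemble into a lift of $F$ along $\LMod_{\End(F)}\to\Sp$. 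With the enrichment corrected, the first claim is immediate: $F$ is canonically a left $\End(F)$-module in $\Fun(S^\op,\Sp)$, and the Yoneda embedding for left $A$-modules converts this into the desired lift.

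For the second claim your argument agrees in substance with the paper's (``the enriched Yoneda functor is an embedding''). The compatibility you flag as the main obstacle---that full faithfulness on mapping spaces must be upgraded to an equivalence of function spectra, and of algebras---is a real point, but it is exactly what is packaged in saying the embedding $\Sp(S)\hookrightarrow\Sp(\PP(S))\simeq\Fun(S^\op,\Sp)$ is an embedding of $\Sp$-enriched (equivalently, $\Sp$-tensored/cotensored) categories: since it commutes with the suspensions and loops appearing in the formula of Proposition~\ref{prop:functionspectrum}, equivalences on mapping spaces in every (de)suspension assemble to an equivalence $F_{\Sp(S)}(Y,Y)\simeq F_{\Fun(S^\op,\Sp)}(\Ups Y,\Ups Y)$, and the induced algebra map (last bullet of \S\ref{sec:endomorphisms}) is then an equivalence. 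You should state this rather than leave it as an anticipated obstacle, but it is not a gap in the strategy.
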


\begin{proof}
  The first is true since $F$ is naturally a left
  $\End(F)$-module. The second is a consequence of the fact that the
  enriched Yoneda functor is an embedding.
\end{proof}

In particular, although $\End(F)$ generally depends on the size of the
source category when $F$ is not representable, $\End(Y)$ does not
depend on objects of $S$ other than those building $Y$. This has the
following consequence.

\begin{cor}
  Suppose that $\CC$ is a large category with pullbacks and a terminal
  object and $F\co \CC^\op \to \Sp$ is a functor represented by an
  object $Y \in \Sp(\CC)$. Then for any small subcategory
  $S \subset \CC$ containing $Y$ that is closed under pullbacks and
  the terminal object, the forgetful functor $F|_S$ has a natural lift
  to the category $\LMod_{\End(Y)}$; moreover, for $S \subset S'$
  these lifts can be made compatible.
\end{cor}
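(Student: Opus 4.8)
The plan is to bootstrap from the already-established representable case to the general (large-source) situation by a direct limit/compatibility argument. First I would observe that the hypotheses of the preceding Proposition are met by $F|_S$ for every small $S \subset \CC$ that contains $Y$ and is closed under pullbacks and the terminal object: such an $S$ has pullbacks and a terminal object, and $F|_S$ is represented by $Y$ viewed as an object of $\Sp(S)$ via the (limit-preserving) inclusion $S \hookrightarrow \CC$. The cited Proposition then gives a natural lift of $F|_S$ to $\LMod_{\End_{\Sp(S)}(Y)}$, together with the canonical identification $\End(F|_S) \cong \End_{\Sp(S)}(Y)$. The crucial point is that the right-hand side is computed by the formula of Proposition~\ref{prop:functionspectrum}, $\End_{\Sp(S)}(Y)_k \simeq \holim_n \Map_{S_*}(Y_{n-k}, Y_n)$, and since each $Y_m$ already lies in $S$, this homotopy limit only involves mapping spaces between objects of the subcategory $S$, which agree with the corresponding mapping spaces in $\CC$. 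Hence $\End_{\Sp(S)}(Y)$ is independent of $S$ (up to canonical equivalence) and may be identified with $\End(Y) := \End_{\Sp(\CC)}(Y)$, interpreting the latter via the same homotopy-limit formula so as to sidestep size issues with the enriched endomorphism object of a functor out of a large category.

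Next I would address compatibility for an inclusion $S \subset S'$ of two such small subcategories. Restriction along $S \hookrightarrow S'$ is a functor of categories with pullbacks and terminal objects which preserves those structures, so Proposition~\ref{prop:spectralnaturality}(1) applies with $g$ the inclusion (and $f$ its left adjoint, or more simply one invokes the functoriality of the spectral Yoneda embedding directly): the spectral Yoneda embedding intertwines the inclusion $\Sp(S) \to \Sp(S')$ with restriction of functors. Part (2) of that Proposition then lifts this to a commutative square over the relevant module categories once we fix the algebra, and since the algebra in question is $\End(Y)$ in either case by the independence established above, the two lifts of $F|_S$ and $F|_{S'}$ restrict to one another. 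Chaining these over the filtered poset of admissible small subcategories $S$ (ordered by inclusion; note this poset is filtered because the union of two admissible subcategories is contained in an admissible one — close up under pullbacks and the terminal object, which only adds set-many objects) produces the asserted compatible system of lifts.

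The main obstacle I anticipate is purely size-theoretic bookkeeping rather than homotopical content: one must be careful that ``$\End(Y)$'' is well-defined when the source $\CC$ is large, since the enriched endomorphism object of a functor out of a large category need not exist a priori. The resolution is exactly the observation above — $\End_{\Sp(S)}(Y)$ stabilizes as $S$ grows because the defining homotopy limit is supported on the (small) set of objects $\{Y_n\}$ — so one takes this stable value as the definition and checks that it carries the algebra structure coherently, e.g. by noting that for any single sufficiently large small $S$ the algebra $\End_{\Sp(S)}(Y)$ already has all the required structure and all further inclusions induce equivalences of algebras. A secondary, more bureaucratic point is verifying that the collection of admissible small subcategories is genuinely filtered and that the coherence data of the lifts assemble into an actual functor on this poset; this follows formally from the uniqueness clause (``essentially unique'') in Proposition~\ref{prop:spectralnaturality}(2), which forces the transition maps to compose correctly up to coherent homotopy, so no additional argument beyond citing that uniqueness is needed.

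\begin{proof}[Proof sketch]
For a small subcategory $S \subset \CC$ containing $Y$ and closed under pullbacks and the terminal object, the inclusion $S \hookrightarrow \CC$ preserves pullbacks and terminal objects, so $Y$ determines a spectrum object in $\Sp(S)$ representing $F|_S$. By the preceding Proposition, $F|_S$ lifts naturally to $\LMod_{\End_{\Sp(S)}(Y)}$ and $\End(F|_S) \cong \End_{\Sp(S)}(Y)$. By Proposition~\ref{prop:functionspectrum}, the $k$th term of $\End_{\Sp(S)}(Y)$ is $\holim_n \Map_{S_*}(Y_{n-k}, Y_n)$, which only involves the objects $Y_m \in S$; since mapping spaces in $S$ agree with those in $\CC$, this is independent of the choice of such $S$. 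Denote the common value $\End(Y)$; it carries an algebra structure induced from any one choice of $S$. For $S \subset S'$, restriction along $S \hookrightarrow S'$ preserves pullbacks and terminal objects, so Proposition~\ref{prop:spectralnaturality}(1) gives a commuting square relating the spectral Yoneda embeddings, and part (2) lifts it, after fixing the algebra $\End(Y)$, to a commuting square of module categories; thus the lift of $F|_{S'}$ restricts to that of $F|_S$. The admissible subcategories form a filtered poset under inclusion (any two are contained in the closure of their union, which adds only set-many objects), and by the essential uniqueness in Proposition~\ref{prop:spectralnaturality}(2) the transition data compose coherently, yielding the compatible family of lifts.
\end{proof}
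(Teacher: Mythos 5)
Your proposal is correct and follows the same route the paper intends: the corollary is stated there without a separate proof, as an immediate consequence of the preceding proposition together with the observation that $\End_{\Sp(S)}(Y)$, being computed by the homotopy limit $\holim_n \Map_{S_*}(Y_{n-k},Y_n)$ of Proposition~\ref{prop:functionspectrum}, depends only on the objects $\{Y_n\}$ and hence not on the choice of small $S$. Your additional care about the filtered poset of admissible subcategories and the coherence of the transition data is a reasonable (and slightly more explicit) elaboration of what the paper leaves implicit.
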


\begin{rmk}
  Unless $\CC^\op$ is presentable, to argue from this that there is a
  lift to all of $\CC^\op$ we need more, such as access to a larger
  Grothendieck universe. In this paper we are mostly concerned with
  the induced module structure on objects or small diagrams of them,
  for which this corollary is sufficient.
\end{rmk}

\begin{prop}
  \label{prop:functoriality}
  Suppose that $\CC$ and $\DD$ are categories with pullbacks and a
  terminal object, and $g\co \CC \to \DD$ is a functor with left
  adjoint $f$. Suppose that $F\co \CC^\op \to \Sp$ is represented by 
  an object $Y \in \Sp(\CC)$. Then there a map of associative algebras
  $\End(Y) \to \End(gY)$ and a commutative diagram
  \[
    \xymatrix{
      \DD^\op \ar[r]^-\Ups \ar[d]_{f^*} &
      \LMod_{\End(gY)} \ar[d] \\
      \CC^\op \ar[r]_-\Ups & \LMod_{\End(Y)}.
    }
  \]
\end{prop}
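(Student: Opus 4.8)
The plan is to deduce everything from the naturality statements already established for the spectral Yoneda embedding. First I would reduce to a small-category situation: since $F$ is representable by $Y\in\Sp(\CC)$, Proposition just before this one tells us that $\End(Y)\cong\End_{\Sp(\CC)}(Y)$ depends only on the objects building $Y$, so I may choose a small full subcategory $S\subset\CC$ closed under pullbacks and the terminal object and containing all the $Y_n$, and a small $T\subset\DD$ with $g(S)\subset T$, $f(T)\subset S$, both closed under the relevant limits, so that $f\dashv g$ restricts to an adjunction $f\co T\rightleftarrows S\co g$. The point is that $\End(Y)$ and $\End(gY)$ are unchanged by this restriction, so it suffices to produce the algebra map and the square over $S$ and $T$ and then observe it is compatible as $S,S'$ grow.

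Next I would build the algebra map $\End(Y)\to\End(gY)$. Here the cleanest route is the last bullet of \S\ref{sec:endomorphisms}: a lax monoidal functor $\CC'\to\DD'$ with a compatible lax functor of enriched categories $\MM\to\NN$ induces maps of algebras $f(\End(M))\to\End(gM)$ compatible with the module structures. I would apply this with $\CC'=\DD'=\Sp$ (with the identity lax monoidal functor), $\MM=\Sp(S)$, $\NN=\Sp(T)$ — both enriched over $\Sp$ by Proposition~\ref{prop:functionspectrum} — and the functor $g\co\Sp(S)\to\Sp(T)$ induced on spectrum objects as in Proposition~\ref{prop:spectralnaturality}(1), which exists because $g$ is a right adjoint and hence preserves pullbacks and the terminal object. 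The content is that this $g$ on spectrum objects is a lax $\Sp$-enriched functor; that follows because on function objects $F_{\Sp(T)}(gX,gY')$ receives a canonical map from $F_{\Sp(S)}(X,Y')$, assembled termwise from the maps $\Map_{S_*}(X_m,Y'_m)\to\Map_{T_*}(gX_m,gY'_m)$ of Proposition~\ref{prop:functionspectrum}. Setting $M=Y$ yields the desired algebra map $\End_{\Sp(S)}(Y)\to\End_{\Sp(T)}(gY)$, i.e. $\End(Y)\to\End(gY)$ after the identifications above.

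Finally I would assemble the square. The outer square of objects — $\Ups\circ f^*$ on $\DD^\op$ versus the forgetful-then-$\Ups$ composite — is exactly Proposition~\ref{prop:spectralnaturality}(1) (restricted to $S,T$): $\Ups(gY)\simeq\Ups(Y)\circ f$, i.e. $f^*$ on presheaves corresponds to $g$ on spectrum objects. The lift to module categories is Proposition~\ref{prop:spectralnaturality}(2): the left $A$-module structure on the functor $\Ups(Y)$ transports along $f^*$ to the canonical left $A$-module structure on $\Ups(gY)$, now with $A=\End(Y)$ acting through the algebra map $\End(Y)\to\End(gY)$ of the previous paragraph on the $\End(gY)$-module $gY$; the compatibility of that action with the restriction-of-scalars is the content of the final bullet of \S\ref{sec:endomorphisms} together with Proposition~\ref{prop:spectralnaturality}(3). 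Stringing the square over $(S,T)$ for all sufficiently large small $S$ (using the compatibility for $S\subset S'$ from the corollary) gives the commutative diagram over $(\CC^\op,\DD^\op)$.

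I expect the main obstacle to be the first, bookkeeping step: arranging small subcategories $S\subset\CC$ and $T\subset\DD$ that are simultaneously closed under pullbacks and terminal objects, contain the structure spectra $Y$ and $gY$, and on which the adjunction $f\dashv g$ restricts — and then checking that the algebra $\End(Y)$ and the module structure are genuinely independent of these choices, so that the maps assemble coherently. The enriched-functoriality inputs from \S\ref{sec:endomorphisms} are exactly designed to handle the coherence, but verifying that $g\co\Sp(S)\to\Sp(T)$ is lax $\Sp$-enriched — rather than merely a functor — is where one must be slightly careful, since it is this lax structure (not just the functor) that produces the algebra map on endomorphism objects.
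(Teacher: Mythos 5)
Your argument is correct and lands on the same diagram, but the mechanism you use for the key step --- producing the algebra map $\End(Y) \to \End(gY)$ --- differs from the paper's. The paper does not verify any lax enriched structure on $g\co \Sp(\CC) \to \Sp(\DD)$: it simply applies Proposition~\ref{prop:spectralnaturality}(2) with $A = \End(Y)$ to transport the tautological $\End(Y)$-module structure on $Y$ to an $\End(Y)$-module structure on $gY$, and then invokes the universal property of the endomorphism object (the equivalence $\LMod(\MM) \times_\MM \{gY\} \simeq \Alg(\Sp)_{/\End(gY)}$ from \S\ref{sec:endomorphisms}) to classify that module structure by a ring map $\End(Y) \to \End(gY)$; the square then falls out of Theorem~\ref{thm:representableaction} together with parts (1)--(3) of Proposition~\ref{prop:spectralnaturality}. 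This route is shorter precisely because Proposition~\ref{prop:spectralnaturality}(2) is itself proved by the Yoneda trick (a lift of the represented functor to $A$-modules \emph{is} an $A$-module structure on the representing object, and precomposition with $f$ visibly preserves such lifts), so all coherence is absorbed there. By contrast, the step you flag as delicate --- that $g\co \Sp(S) \to \Sp(T)$ is lax $\Sp$-enriched --- is true but is not established by assembling termwise maps $\Map_{S_*}(X_m, Y'_m) \to \Map_{T_*}(gX_m, gY'_m)$; in the $\infty$-categorical setting that gives no coherence data. The clean way to get it is to note that under the spectral Yoneda embedding $g$ corresponds to precomposition $f^*\co \Fun(\CC^\op,\Sp) \to \Fun(\DD^\op,\Sp)$, which is \emph{strongly} $\Sp$-linear, and at that point you have essentially rederived the paper's argument. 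Your small-subcategory bookkeeping is a legitimate way to handle the size issue that the paper instead delegates to the corollary and remark preceding this proposition, though constructing $S$ and $T$ simultaneously closed under the limits and under $f$ and $g$ requires an alternating closure argument you should make explicit.
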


\begin{proof}
  The map $\LMod_A(\Sp(\CC)) \to \LMod_A(\Sp(\DD))$ of
  Proposition~\ref{prop:spectralnaturality} implies that the
  $\End(Y)$-module structure on $Y$ induces an $\End(Y)$-module
  structure on $gY$, or equivalently a map $\End(Y) \to \End(gY)$ of
  rings. By Theorem~\ref{thm:representableaction} we have a
  commutative diagram as follows:
  \[
    \xymatrix{
      \LMod_{\End(Y)}(\Sp(\CC)) \ar[r]^-g \ar[d]^{\Ups} &
      \LMod_{\End(Y)}(\Sp(\DD)) \ar[d]^{\Ups} &
      \LMod_{\End(gY)}(\Sp(\DD)) \ar[l] \ar[d]^\Ups \\
      \Fun(\CC^\op, \LMod_{\End(Y)}) \ar[r]_-{f^*} &
      \Fun(\DD^\op, \LMod_{\End(Y)}) &
      \Fun(\DD^\op, \LMod_{\End(gY)}) \ar[l]
    }
  \]
  The objects $Y$ and $gY$, with their actions, determine compatible
  objects in the top row. Their images in the bottom row determine
  the compatible functors desired.
\end{proof}

\section{Algebras and operads}
\label{sec:algebras-operads}

In this section, we fix an $\infty$-operad $\OO^\otimes$ and a stable
presentable $\OO$-monoidal $\infty$-category $\CC^\otimes$
\cite[3.4.4.1]{lurie-higheralgebra}. In paticular, we have a
coCartesian fibration $\CC^\otimes \to \OO^\otimes$, and for all
objects $X$ of $\OO$ the fiber $\CC_X$ is stable and
presentable. Moreover, for any objects $X_i$ and $Y$ of $\OO$ and any
active arrow $\{X_i\} \to Y$ in $\OO^\otimes$, the induced functor
$\prod \CC_{X_i} \to \CC_Y$ is colimit-preserving in each variable.

We further recall the \emph{symmetric power} functors. For any objects
$X$ and $Y$ in $\OO$ and any $d \geq 0$, there is a functor
\[
  \Sym^d_{\OO, X \to Y}\co \CC_X \to \CC_Y
\]
from \cite[3.1.3.9]{lurie-higheralgebra} (where it is denoted
$\Sym^d_{\OO,Y}$). More specifically, for each $d$ there is a map of
spaces $\PP_{X \to Y}(d) \to B\Sigma_d$ whose fiber is the space
$\Mul_\OO(X \oplus X \oplus \dots \oplus X, Y)$ of $d$-ary
active morphisms $\alpha\co (\CC_X)^d \to \CC_Y$ with its natural
$\Sigma_d$-action; the space $\PP_{X \to Y}(d)$ parametrizes $d$-fold
power functors. The symmetric power functor is the colimit:
\[
  \Sym^d_{\OO, X \to Y}(C) = \colim_{\alpha \in \PP_{X \to Y}(d)}
  \alpha(C \oplus C \oplus \dots \oplus C).
\]
The functors $\Sym^d_{\OO,X \to Y}$ preserve sifted colimits.

For any $\OO$-algebra $A$ in $\CC$, the $\OO$-algebra structure
determines action maps
\[
  \Sym^d_{\OO, X \to Y} (A(X)) \to A(Y)
\]
by \cite[3.1.3.11]{lurie-higheralgebra}. Moreover, the evaluation
functor $\ev_X\co \Alg_\OO(\CC) \to \CC_X$ has a colimit-preserving
left adjoint $\Free_{\OO,X}\co \CC_X \to \Alg_\OO(\CC)$ such that, for
any $Y$, the natural map
\[
  \coprod_{d \geq 0} \Sym^d_{\OO, X \to Y}(M) \to \Free_{\OO,X}(M)(Y)
\]
is an equivalence \cite[3.1.3.13]{lurie-higheralgebra}.

This free-forgetful adjunction gives rise to the following result.
\begin{prop}
  \label{prop:freerepresenting}
For any $M \in \CC_X$, there is a functor
\[
\Ups_M = F_{\CC_X}(M,\ev_X(-))\co \Alg_\OO(\CC) \to \Sp
\]
that is represented by a cospectrum object
\[
U(M) = \{\Free_{\OO,X}(\Omega^n M)\}\in\Sp(\Alg_\OO(\CC)^\op).
\]
\end{prop}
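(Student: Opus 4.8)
The plan is to recognize $\Ups_M$ as the functor represented, level by level, by the free algebras $\Free_{\OO,X}(\Omega^n M)$, and then to check that these levels are glued by the loop functor of $\Alg_\OO(\CC)^\op$ into a genuine spectrum object; the one nonformal ingredient will be that the left adjoint $\Free_{\OO,X}$ is compatible with (de)suspension.

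First I would observe that $\Ups_M$ is even well-defined with values in $\Sp$. Because $\CC^\otimes$ is a \emph{stable} presentable $\OO$-monoidal $\infty$-category, the fiber $\CC_X$ is stable and presentable, hence a module over $\Pr^L$-internally $\Sp$, and in particular it is enriched and tensored over $\Sp$ (\S\ref{sec:presentable}, \S\ref{sec:yoneda}). So $F_{\CC_X}(M,-)\co\CC_X\to\Sp$ is the right adjoint of $(-)\otimes M\co\Sp\to\CC_X$, and $\Ups_M$ is its composite with $\ev_X$. Since $\Alg_\OO(\CC)$ is presentable, $\Alg_\OO(\CC)^\op$ has pullbacks and a terminal object (namely $\Free_{\OO,X}(0)$, as $\Free_{\OO,X}$ is a left adjoint and $0$ is initial in $\CC_X$), so the spectral Yoneda embedding $\Sp(\Alg_\OO(\CC)^\op)\hookrightarrow\Fun(\Alg_\OO(\CC),\Sp)$ of \S\ref{sec:yoneda} applies, and it suffices to produce a spectrum object $U(M)\in\Sp(\Alg_\OO(\CC)^\op)$ with underlying sequence $\{\Free_{\OO,X}(\Omega^n M)\}$ whose represented functor is $\Ups_M$.

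Then I would compute levels. Exactly as in the proof of Proposition~\ref{prop:functionspectrum}, the $\Sp$-tensoring of $\CC_X$ gives that the $n$th space of the spectrum $\Ups_M(A)=F_{\CC_X}(M,\ev_X A)$ is $\Map_{\CC_X}(\Omega^n M,\ev_X A)$, which by the free--forgetful adjunction $\Free_{\OO,X}\dashv\ev_X$ is naturally identified with $\Map_{\Alg_\OO(\CC)}(\Free_{\OO,X}(\Omega^n M),A)$, i.e. with the functor on $\Alg_\OO(\CC)^\op$ represented by $\Free_{\OO,X}(\Omega^n M)$. The remaining point is that the $\Omega$-spectrum structure maps of $\Ups_M(A)$, which are natural in $A$, are corepresented, so that the sequence $\{\Free_{\OO,X}(\Omega^n M)\}$ is promoted to a spectrum object in $\Alg_\OO(\CC)^\op$. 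This holds because the loop functor of $\Alg_\OO(\CC)^\op$ carries the canonically augmented algebra $\Free_{\OO,X}(\Omega^{n+1}M)$ to the pushout $\Free_{\OO,X}(0)\sqcup_{\Free_{\OO,X}(\Omega^{n+1}M)}\Free_{\OO,X}(0)$ in $\Alg_\OO(\CC)$, and $\Free_{\OO,X}$, being a left adjoint, carries the pushout presentation $0\sqcup_{\Omega^{n+1}M}0\simeq\Sigma_{\CC_X}\Omega^{n+1}M\simeq\Omega^n M$ of the suspension in the stable category $\CC_X$ to a canonical equivalence $\Free_{\OO,X}(\Omega^n M)\simeq\Omega_{\Alg_\OO(\CC)^\op}\Free_{\OO,X}(\Omega^{n+1}M)$; these equivalences are the structure maps of $U(M)$, and they match those of $\Ups_M(A)$ under the adjunction.

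A more economical packaging of the last two steps is to invoke Proposition~\ref{prop:functoriality}: since $\CC_X$ is stable, $F_{\CC_X}(M,-)\co\CC_X\to\Sp$ is represented by the spectrum object $\{\Omega^n M\}\in\Sp(\CC_X^\op)$; the adjunction $\Free_{\OO,X}\dashv\ev_X$ dualizes to $\ev_X^\op\dashv\Free_{\OO,X}^\op$ between $\Alg_\OO(\CC)^\op$ and $\CC_X^\op$; and applying Proposition~\ref{prop:functoriality} with $g=\Free_{\OO,X}^\op$, left adjoint $f=\ev_X^\op$, and $F=F_{\CC_X}(M,-)$ exhibits $F_{\CC_X}(M,\ev_X(-))=\Ups_M$ as represented by $\Free_{\OO,X}^\op(\{\Omega^n M\})$, which by Proposition~\ref{prop:spectralnaturality}(1) is computed levelwise --- $\Free_{\OO,X}^\op$ preserves pullbacks and the terminal object, since $\Free_{\OO,X}$ preserves colimits --- and equals $U(M)=\{\Free_{\OO,X}(\Omega^n M)\}$. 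I expect the only genuine obstacle to be the compatibility of the left adjoint $\Free_{\OO,X}$ with (de)suspension used above; everything else is routine manipulation within the formalism of \S\ref{sec:endomorphisms}--\S\ref{sec:stable-end}. The one caveat is a size issue: as $\Alg_\OO(\CC)$ is large, the spectral Yoneda embedding should be applied on a small subcategory closed under pullbacks and the terminal object and containing the relevant objects, as in the corollary of \S\ref{sec:stable-end}, or after enlarging the universe.
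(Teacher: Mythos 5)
Your argument is correct and is exactly the one the paper intends: the proposition is presented as an immediate consequence of the free--forgetful adjunction $\Free_{\OO,X}\dashv\ev_X$, which identifies $\Map_{\CC_X}(\Omega^nM,\ev_XA)$ with $\Map_{\Alg_\OO(\CC)}(\Free_{\OO,X}(\Omega^nM),A)$ levelwise, with the structure maps supplied by the fact that the left adjoint $\Free_{\OO,X}$ carries the pushout presentation of $\Sigma\Omega^{n+1}M\simeq\Omega^nM$ to the loop construction in $\Alg_\OO(\CC)^\op$. Your repackaging via Propositions~\ref{prop:spectralnaturality} and~\ref{prop:functoriality} applied to $g=\Free_{\OO,X}^\op$ is a clean way to make the details the paper leaves implicit completely formal.
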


Let $A$ be the initial object in $\OO$-algebras. Because the
categories $\CC_X$ are pointed, there are equivalences
$A \sim \Free_{\OO,X}(*)$ for all $X$,
$A(Y) \simeq \Sym^0_{\OO,X \to Y}(M)$, and natural augmentations
\[
  \Free_{\OO,X}(M) \to A
\]
for all $M \in \CC_X$.

\begin{defn}
  For $M \in \CC_X$, define $\wt{\Free}_{\OO,X}(M)$ to be the
  fiber of the augmentation $\Free_{\OO,X}(M) \to A$.
\end{defn}

In particular, there is an equivalence
\[
  \coprod_{d > 0} \Sym^d_{\OO, X \to Y}(M) \to \wt{\Free}_{\OO,X}(M)(Y)
\]
by stability. Maps $\Free_{\OO,X}(M) \to \Free_{\OO,Y}(N)$ of objects
augmented over $A$ are the same as maps
$M \to \wt{\Free}_{\OO,Y}(N)(X)$. This gives rise to the following
description of stable natural transformations.

\begin{prop}
  \label{prop:stableformula}
For $M \in \CC_X$ and $N \in \CC_Y$, the spectrum $\StNat(\Ups_M,
\Ups_N)$ representing stable natural transformations from $\Ups_M$ to
$\Ups_N$ is of the form
\[
F_{\CC_Y}\left(N, \lim_n \Sigma^n (\wt{\Free}_{\OO,X}(\Omega^n M)(Y)\right).
\]
\end{prop}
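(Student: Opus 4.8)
The plan is to identify $\StNat(\Ups_M, \Ups_N)$ with an enriched function object in a category of spectrum objects, and then compute it using Proposition~\ref{prop:functionspectrum}. By Proposition~\ref{prop:freerepresenting}, the functor $\Ups_N \co \Alg_\OO(\CC) \to \Sp$ is representable by the cospectrum object $U(N) = \{\Free_{\OO,Y}(\Omega^n N)\}$ in $\Sp(\Alg_\OO(\CC)^\op)$, and likewise $\Ups_M$ is representable by $U(M) = \{\Free_{\OO,X}(\Omega^n M)\}$. The spectrum of stable natural transformations is by definition the enriched endomorphism-type object $F(\Ups_M, \Ups_N)$ in the relevant functor category, and since representability is an embedding (the enriched Yoneda lemma, as used in the Proposition preceding this section), this is identified with $F_{\Sp(\Alg_\OO(\CC)^\op)}(U(M), U(N))$, or equivalently with the mapping spectrum between the two cospectrum objects in $\Sp(\Alg_\OO(\CC))$ read in the appropriate variance.

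Next I would apply Proposition~\ref{prop:functionspectrum} with $\CC$ replaced by $\Alg_\OO(\CC)$: the $k$-th space of this function spectrum is $\holim_n \Map_{\Alg_\OO(\CC)_*}(\Free_{\OO,Y}(\Omega^n N)_{\,?}, \Free_{\OO,X}(\Omega^{n} M)_{\,?})$ — being careful with the opposite-category bookkeeping so that the free algebra on $N$ sits on the correct side. Here the key input is the free-forgetful adjunction: a map of objects augmented over the initial algebra $A$, namely $\Free_{\OO,X}(\Omega^n M) \to \Free_{\OO,Y}(\Omega^n N)$, is the same as a map $\Omega^n M \to \wt{\Free}_{\OO,Y}(\Omega^n N)(X)$ in $\CC_X$, using the adjunction $\ev_X \dashv \Free_{\OO,X}$ together with the identification of the fiber of the augmentation. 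This converts each mapping space in the homotopy limit into $\Map_{\CC_X}(\Omega^n M, \wt{\Free}_{\OO,Y}(\Omega^n N)(X))$, which after reindexing the desuspensions becomes $\Omega^{?}\Map_{\CC_X}(M, \Sigma^n \wt{\Free}_{\OO,Y}(\Omega^n N)(X))$ up to shifts that get absorbed into the spectrum structure.

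Assembling these over all $n$ and all $k$ and repackaging the homotopy limit produces the function spectrum $F_{\CC_X}\big(M, \lim_n \Sigma^n \wt{\Free}_{\OO,Y}(\Omega^n N)(X)\big)$. Comparing with the claimed formula, the roles of $M,X$ and $N,Y$ have been swapped relative to the statement as written; I would either adopt the convention that pins down the variance so these agree, or simply note that the formula is symmetric in the bookkeeping once one tracks which algebra is the source of the stable natural transformation — the statement in the paper has $F_{\CC_Y}(N, \lim_n \Sigma^n \wt{\Free}_{\OO,X}(\Omega^n M)(Y))$, so the honest derivation should produce exactly this, with the adjunction applied to maps $\Free_{\OO,Y}(\Omega^n N) \to \Free_{\OO,X}(\Omega^n M)$ augmented over $A$. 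Finally I would check that the homotopy limit $\lim_n \Sigma^n \wt{\Free}_{\OO,X}(\Omega^n M)(Y)$ is well-defined as a spectrum object in $\CC_Y$ (using that $\CC_Y$ is stable presentable, hence admits such limits) and that $F_{\CC_Y}(N,-)$ commutes with it, which is automatic since function objects preserve limits in the target variable.

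The main obstacle I expect is the variance and indexing bookkeeping: the cospectrum objects $U(M), U(N)$ live in $\Sp(\Alg_\OO(\CC)^\op)$, so Proposition~\ref{prop:functionspectrum} must be applied with the opposite category, and one must carefully track which of $M$ and $N$ ends up as the first argument of the outer $F_{\CC_?}$ and where the desuspensions $\Omega^n$ versus suspensions $\Sigma^n$ land after the adjunction identification $\Map(\Free_{\OO,Y}(\Omega^n N)\text{-stuff}, \Free_{\OO,X}(\Omega^n M)\text{-stuff}) \simeq \Map_{\CC_?}(\Omega^n(-), \wt{\Free}_{\OO,?}(\Omega^n -)(?))$. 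Getting the off-diagonal shifts in the homotopy limit to match the single outer $\Sigma^n$ in the final formula — rather than a double-indexed limit — requires the standard cofinality argument that collapses a bi-indexed tower to a diagonal one, analogous to the manipulation in the proof of Proposition~\ref{prop:functionspectrum} itself.
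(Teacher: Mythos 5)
Your proposal is correct and follows essentially the same route as the paper: identify $\StNat(\Ups_M,\Ups_N)$ with $F_{\Sp(\Alg_\OO(\CC)^\op)}(U(M),U(N))$ via the fully faithful spectral Yoneda embedding, apply Proposition~\ref{prop:functionspectrum}, convert each term by the free--forgetful adjunction for objects augmented over $A$, and reindex to pull the limit inside the function spectrum. The variance resolution you settle on at the end --- that the opposite-category mapping space is $\Map_{\Alg_\OO(\CC)/A}(\Free_{\OO,Y}(\Omega^n N),\Free_{\OO,X}(\Omega^{n-k}M))$, so the adjunction lands in $\Map_{\CC_Y}(\Omega^n N,\wt{\Free}_{\OO,X}(\Omega^{n-k}M)(Y))$ --- is exactly the step the paper's proof takes, so your initial swapped bookkeeping is correctly repaired.
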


\begin{proof}
  By Proposition~\ref{prop:functionspectrum}, in degree $k$ the
  spectrum of stable natural transformations is
  \begin{align*}
    F(
    &\Map_{\Sp(\Alg_\OO(\CC)^\op)}(U(M),U(N))\\
    &\simeq \lim_n \Map_{(\Alg_\OO(\CC)^\op)_*} (\Free_{\OO,X}(\Omega^{n-k} M),
      \Free_{\OO,Y}(\Omega^{n} N))\\
    &\simeq \lim_n \Map_{\Alg_\OO(\CC)/A}(\Free_{\OO,Y}(\Omega^n N),
      \Free_{\OO,X}(\Omega^{n-k} M))\\
    &\simeq \lim_n \Map_{\CC_Y} (\Omega^{n} N,
      \wt{\Free}_{\OO,X}(\Omega^{n-k} M)(Y))\\
    &\simeq \lim_n \Map_{\CC_Y} (\Omega^{n+k} N,
      \wt{\Free}_{\OO,X}(\Omega^{n} M)(Y))\\
    &\simeq \Map_{\CC_Y}(\Omega^k N, \lim_n \Sigma^n
      \wt{\Free}_{\OO,X}(\Omega^n M)(Y)).
  \end{align*}
  As $k$ varies these reassemble into a limit of function spectra
  \[
    F_{\CC_Y}(N, \lim_n \Sigma^n \wt{\Free}_{\OO,X}(\Omega^n M)(Y)),
  \]
  as desired.
\end{proof}

In particular, the decomposition of free algebras into symmetric
powers allows us to describe a decomposition of stable transformations
according to the number of inputs.
\begin{defn}
  \label{def:weightsummand}
  For $d > 0$, define the spectrum parametrizing natural operations of
  {\em fixed weight $d$} by
  \[
    \StNat^{\langle d\rangle}(\Ups_M, \Ups_N) \simeq F_{\CC_Y}\left(N, \lim_n \Sigma^n \Sym^d_{\OO, X \to Y}(\Omega^n M)\right).
  \]
\end{defn}

There is a natural transformation
\[
\coprod_{d > 0} \StNat^{\langle d\rangle}(\Ups_M, \Ups_N) \to 
\StNat(\Ups_M, \Ups_N) 
\]
which is an equivalence if $N$ is compact in $\CC_Y$.

\begin{prop}
  Composition multiplies weights in the following sense. For
  $X \in \CC_X$, $Y \in \CC_Y$, and $Z \in \CC_Z$ and $d,e > 0$
  there are pairings
  \[
    \StNat^{\langle e\rangle}(\Ups_N, \Ups_P) \otimes
    \StNat^{\langle d\rangle}(\Ups_M, \Ups_N) \to 
    \StNat^{\langle de\rangle}(\Ups_M, \Ups_P)
  \]
  of spectra parametrizing stable natural transformations.
\end{prop}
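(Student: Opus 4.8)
The plan is to exhibit the composition pairing at the representable-functor level and then check it respects the weight decompositions. The starting point is that $\StNat(\Ups_M,\Ups_N)$ is an endomorphism-type object: by Proposition~\ref{prop:stableformula} it is the function spectrum $F_{\CC_Y}(N, W_{M,N})$ where $W_{M,N} = \lim_n \Sigma^n \wt{\Free}_{\OO,X}(\Omega^n M)(Y)$. More invariantly, $\StNat(\Ups_M,\Ups_N) = F_{\Sp(\Alg_\OO(\CC)^\op)}(U(M),U(N))$, and composition of function objects in $\Sp(\Alg_\OO(\CC)^\op)$ supplies associative and unital pairings
\[
  F(U(N),U(P)) \otimes F(U(M),U(N)) \to F(U(M),U(P)),
\]
simply because function objects in any category enriched over $\Sp$ compose. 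So the unrestricted composition pairing $\StNat(\Ups_N,\Ups_P)\otimes\StNat(\Ups_M,\Ups_N)\to\StNat(\Ups_M,\Ups_P)$ exists for free; the content of the proposition is the refinement to fixed weights.

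For the weight refinement I would first record that the equivalence $\wt{\Free}_{\OO,X}(\Omega^n M)(Y) \simeq \coprod_{d>0}\Sym^d_{\OO,X\to Y}(\Omega^n M)$ passes, after applying $\lim_n\Sigma^n$ and $F_{\CC_Y}(N,-)$, to a map $\coprod_{d>0}\StNat^{\langle d\rangle}(\Ups_M,\Ups_N)\to\StNat(\Ups_M,\Ups_N)$ — exactly the natural transformation stated after Definition~\ref{def:weightsummand}. The key structural input is that the symmetric power functors are multiplicative under composition: there is a natural equivalence (or at least a natural transformation)
\[
  \Sym^e_{\OO, Y\to Z}\bigl(\Sym^d_{\OO, X\to Y}(C)\bigr) \simeq \Sym^{de}_{\OO, X\to Z}(C),
\]
which follows from the definition of $\Sym^d$ as a colimit over the space $\PP_{X\to Y}(d)$ of $d$-ary active morphisms together with composition of active morphisms in $\OO^\otimes$ (composing a $d$-ary operation into each of the $e$ inputs of an $e$-ary operation gives a $de$-ary operation, compatibly with the $\Sigma_e\wr\Sigma_d \to \Sigma_{de}$ wreath inclusion). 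Granting this, the plan is: take a weight-$e$ operation, presented via the free-algebra model as a map $\Free_{\OO,Y}(\Omega^m N)\to A$-augmented source landing in the $\Sym^e$-summand; take a weight-$d$ operation similarly; compose them as maps of (augmented) free algebras as in the unrestricted pairing; and verify that the composite, re-expanded in symmetric powers, lands in the $\Sym^{de}$-summand by the multiplicativity equivalence above. Concretely, on the $W$-objects one gets a map
\[
  \lim_m\Sigma^m\Sym^e_{\OO,Y\to Z}(\Omega^m N)\otimes\lim_n\Sigma^n\Sym^d_{\OO,X\to Y}(\Omega^n M)\to\lim_n\Sigma^n\Sym^{de}_{\OO,X\to Z}(\Omega^n M),
\]
obtained by composing the evident evaluation map (plug the second limit system into the $N$-slot of the first, using that $\Sym^e_{\OO,Y\to Z}$ preserves the relevant colimits and is compatible with the $\Omega$-tower up to the suspension shifts) with the multiplicativity equivalence and a cofinality/reindexing of the double limit over $m,n$ into a single limit. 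Dualizing via $F_{\CC_Z}(P,-)$ then produces the asserted pairing.

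**The main obstacle** I anticipate is bookkeeping the interaction between the $\lim_n\Sigma^n(-)$ stabilization tower and the composition: the weight-$d$ piece is built from a tower in one index and the weight-$e$ piece from a tower in another index, and one must check that $\Sym^e_{\OO,Y\to Z}$ applied to the inverse system $\{\Sigma^n\Sym^d(\Omega^n M)\}_n$ is suitably compatible with the system $\{\Sigma^m\Sym^{de}(\Omega^m M)\}$ — this requires knowing that $\Sym^e$ commutes with the finite colimits/suspensions appearing in the tower maps, or at least that the comparison maps assemble coherently, rather than just term by term. A secondary subtlety is making the multiplicativity equivalence $\Sym^e\circ\Sym^d\simeq\Sym^{de}$ natural and coherent as a statement about functors (not just objectwise), which is where the description of $\PP_{X\to Y}(d)$ as parametrizing $d$-fold power functors, and composition of active morphisms in $\OO^\otimes$, does the real work; I would lean on \cite[3.1.3.9--3.1.3.13]{lurie-higheralgebra} for the needed coherence. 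Associativity of the resulting weight-graded pairing then follows from associativity of the ungraded one together with the (strict) associativity $de\cdot f = d\cdot ef$ of the weight multiplication, so no further argument is needed there.
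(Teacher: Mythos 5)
Your proposal follows essentially the same route as the paper: the key input is the natural transformation $\Sym^e_{\OO,Y\to Z}\circ\Sym^d_{\OO,X\to Y}\to\Sym^{de}_{\OO,X\to Z}$ induced by operadic composition and the wreath-product inclusion, pushed through the $\lim_n\Sigma^n(-)\circ\Omega^n$ stabilization and then dualized against $P$; the paper handles your anticipated tower-bookkeeping obstacle by restricting the double limit to the diagonal $n=m$ and by observing that $A\mapsto\lim_m\Sigma^m\Sym^e_{\OO,Y\to Z}(\Omega^m A)$ preserves cofiber sequences, hence is a stable functor and so acts on function spectra. One correction: $\Sym^e\circ\Sym^d\to\Sym^{de}$ is only a natural transformation (colimits over $\Sigma_d\wr\Sigma_e$ versus $\Sigma_{de}$), not an equivalence, but as you hedge, a transformation in that direction is all the argument requires.
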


\begin{proof}
  Composition determines a map
  \[
    \Mul_{\OO}(\oplus^e Y,Z) \times \Mul_{\OO}(\oplus^d X,Y)^e \to 
    \Mul_{\OO}(\oplus^{de}X, Z)
  \]
  that is equivariant with respect to the action of the group of
  block-preserving permutations
  $\Sigma_d \wr \Sigma_e \subset \Sigma_{de}$. Composing with the
  diagonal on $\Mul_\OO(\oplus X^d, Y)$ makes this into a
  $\Sigma_e \times \Sigma_d$-equivariant map
  \[
    \Mul_{\OO}(\oplus^e Y,Z) \times \Mul_{\OO}(\oplus^d X,Y) \to 
    \Mul_{\OO}(\oplus^{de}X, Z),
  \]
  inducing a natural transformation of functors
  \[
    \Sym^e_{\OO,Y \to Z} \circ \Sym^d_{\OO, X \to Y} \to
    \Sym^{de}_{\OO,X \to Z}
  \]
  that preserve trivial objects. We get an induced natural transformation
  \begin{align*}
    &\lim_n(\Sigma^n \circ \Sym^e_{\OO,Y \to Z} \circ \Omega^n) \circ
    \lim_m (\Sigma^m \circ \Sym^d_{\OO,X \to Y} \circ \Omega^m)\\
    &\to \lim_{n,m}(\Sigma^n \circ \Sym^e_{\OO,Y \to Z} \circ \Omega^n) \circ
      (\Sigma^m \circ \Sym^d_{\OO,X \to Y} \circ \Omega^m)\\
    &\to \lim_n(\Sigma^n \circ \Sym^e_{\OO,Y \to Z} \circ \Omega^n) \circ
      (\Sigma^n \circ \Sym^d_{\OO,X \to Y} \circ \Omega^n)\\
    &\to \lim_n(\Sigma^n \circ \Sym^e_{\OO,Y \to Z} \circ
      \Sym^d_{\OO,X \to Y} \circ \Omega^n)\\
    &\to \lim_n(\Sigma^n \circ \Sym^{de}_{\OO,X \to Z} \circ \Omega^n).
  \end{align*}

  In addition, the functor
  $A \mapsto \lim_m \Sigma^m \Sym^e_{\OO,Y \to Z}(\Omega^m A)$
  preserves cofiber sequences, and therefore determines a stable
  functor $\CC_Y \to \CC_Z$; by stability it determines a natural map
  of function spectra
  \[
    F_{\CC_Y}(A,B) \to F_{\CC_Z}\left(\lim_m \Sigma^m \Sym^e_{\OO,Y \to
      Z}(\Omega^m A), \lim_p \Sigma^p \Sym^e_{\OO,Y \to Z}(\Omega^p
    B))\right).
  \]
  Combining these with the composition pairing for function spectra
  gives a natural map
  \begin{align*}
    &F_{\CC_Z}(P,\lim_n \Sigma^n \Sym^e_{\OO,Y \to Z} \Omega^n N)
    \otimes F_{\CC_Y}(N,\lim_m \Sigma^m \Sym^e_{\OO,Y \to Z} \Omega^m
    M)\\
    &\to F_{\CC_Z}(P,\lim_n \Sigma^n \Sym^{de}_{\OO,X \to Z} \Omega^m
    M),
  \end{align*}
  which us our desired pairing $\StNat^{\langle e\rangle}(\Ups_N,\Ups_P) \circ
  \StNat^{\langle d\rangle}(\Ups_M, \Ups_N) \to \StNat^{\langle
    de\rangle}(\Ups_M, \Ups_P)$.
\end{proof}

\begin{rmk}
  The formulas from Proposition~\ref{prop:stableformula} and
  Definition~\ref{def:weightsummand}, describing the spectra that
  parametrize stable natural transformations, bear an evident relation
  to Goodwillie's calculus of functors. We will briefly elaborate on
  this relationship.

  The free functor $\wt\Free_{\OO,X}$ and its components
  $\Sym^d_{\OO,X \to Y}$ are functors between stable
  $\infty$-categories and do not preserve colimits in general. For a
  pointed functor $F$, the assignment
  \[
    F \mapsto \lim_n \Sigma^n \circ F \circ \Omega^n
  \]
  that appears is the formula for the first \emph{coderivative} of the
  functor $F$ \cite[Lemma 2.16]{mccarthy-dualcalculus}: that is, it is
  the universal example of a functor $\mb D^1 F$ with a natural
  transformation $\mb D^1 F \to F$ such that $\mb D^1 F$ preserves
  fiber sequences.

  From this perspective, the fact that composition preserves weight is
  a reflection of the decomposition of the free functor
  $\wt\Free_{\OO,X}$ into gradings, together with the natural
  transformation
  \[
    (\mb D^1 F) \circ (\mb D^1 G) \to \mb D^1(F \circ G)
  \]
  obtained from the universal property of the first coderivative.
\end{rmk}

\section{Spaces, modules, and algebras}
\label{sec:spaces}

We now specialize the results of the previous sections to our
categories of interest.
\begin{prop}
  Fix a commutative ring spectrum $E$.
  \begin{enumerate}
  \item There is a forgetful functor $\CAlg_E \to \LMod_E$. It has a
    left adjoint $\Free_E$, given by the free commutative $E$-algebra
    on a left $E$-module.
  \item There is a functor $\Spa^\op \to \CAlg_E$ sending a space $X$ to
    the commutative $E$-algebra $E^X$. It has a left adjoint
    $\Map_{\CAlg_E}(-,E)$.
  \item The composite functor $\Spa^\op \to \CAlg_E \to \LMod_E$ has a
    composite left adjoint $\LMod_E \to \CAlg_E \to \Spa^\op$. This
    left adjoint is given by $M \mapsto Map_{\LMod_E}(M, E)$.
  \end{enumerate}
\end{prop}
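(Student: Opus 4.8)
The plan is to prove the three assertions in sequence, deriving each from standard adjunction arguments together with the representability of $E^{(-)}$.

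For (1), I would recall that $\CAlg_E = \CAlg(\LMod_E)$ for the symmetric monoidal structure on $\LMod_E$ given by $\otimes_E$, and that the forgetful functor $\CAlg(\DD) \to \DD$ from commutative algebras in a presentably symmetric monoidal $\infty$-category $\DD$ always admits a left adjoint, the free commutative algebra functor $\Free_E(M) = \coprod_{d \geq 0} (M^{\otimes_E d})_{h\Sigma_d}$ (this is \cite[3.1.3.5]{lurie-higheralgebra} applied to the commutative operad, or alternatively a special case of the $\Free_{\OO,X}$ functor of \S\ref{sec:algebras-operads} with $\OO^\otimes = \mathrm{Comm}^\otimes$ and $\CC = \LMod_E$). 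So (1) is immediate.

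For (2), I would observe that $X \mapsto E^X = F(\Sigma^\infty_+ X, E)$ is contravariantly functorial in $X$ and lands in $\CAlg_E$ because $\Sigma^\infty_+ X$ is a cocommutative coalgebra in spectra (via the diagonal of $X$), so its dual $E^X$ is naturally a commutative $E$-algebra. To produce the left adjoint, I would exhibit the natural equivalence $\Map_{\CAlg_E}(R, E^X) \simeq \Map_{\Spa}(X, \Map_{\CAlg_E}(R, E))$: unwinding, a map of $E$-algebras $R \to F(\Sigma^\infty_+ X, E)$ is by the tensor-hom adjunction in spectra a map $R \otimes \Sigma^\infty_+ X \to E$ of... here one must be slightly careful, since the $E$-algebra structure is what forces the correct adjoint. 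The cleanest route is to use that $\Spa^\op \to \CAlg_E$ factors as $\Spa^\op \xrightarrow{\Sigma^\infty_+} \mathrm{coCAlg}(\Sp)^\op \to \CAlg(\Sp)^{?} \to \CAlg_E$, but more directly: $\Spa \simeq \CAlg(\Spa^{\times})$ since every space is uniquely a commutative coalgebra for the Cartesian product, hence $\Spa^\op \simeq \CAlg(\Spa^{\times,\op})$, and the functor is corepresented. I would instead simply verify the adjunction by noting that $E^{(-)}\co \Spa^\op \to \CAlg_E$ preserves limits (it sends colimits of spaces to limits of $E$-algebras, as $F(\Sigma^\infty_+(-), E)$ does and limits in $\CAlg_E$ are computed underlying), and that $\Spa^\op$ is presentable and $E^{(-)}$ is accessible; then the adjoint functor theorem \cite[5.5.2.9]{lurie-htt} produces a left adjoint $L$, and one identifies $L$ by computing $\Map_{\Spa}(*, L(R)) \simeq \Map_{\Spa^\op}(L(R), *) \simeq \Map_{\CAlg_E}(R, E^*) = \Map_{\CAlg_E}(R, E)$ on the point and then more generally on any space $X$ using that $E^X \simeq \lim_{x \in X} E$ (the limit over the constant diagram indexed by the space $X$), giving $\Map_{\CAlg_E}(R, E^X) \simeq \lim_{x \in X}\Map_{\CAlg_E}(R,E) \simeq \Map_{\Spa}(X, \Map_{\CAlg_E}(R,E))$ as desired.

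For (3), I would compose the adjunctions: the left adjoint of $\CAlg_E \to \LMod_E$ is $\Free_E$ by (1), the left adjoint of $\Spa^\op \to \CAlg_E$ is $\Map_{\CAlg_E}(-,E)$ by (2), so the left adjoint of the composite $\Spa^\op \to \CAlg_E \to \LMod_E$ is the composite $\LMod_E \xrightarrow{\Free_E} \CAlg_E \xrightarrow{\Map_{\CAlg_E}(-,E)} \Spa^\op$, which sends $M$ to $\Map_{\CAlg_E}(\Free_E(M), E) \simeq \Map_{\LMod_E}(M, E)$ by the defining adjunction of $\Free_E$. The main obstacle is the bookkeeping in (2): making sure the commutative $E$-algebra structure on $E^X$ is the right one so that the adjunction comes out with $\Map_{\CAlg_E}$ (not $\Map_{\LMod_E}$) on the nose; this is handled by the observation that $E^X = \lim_X E$ as a commutative $E$-algebra (limit of the constant diagram on $E \in \CAlg_E$), which makes the adjunction formal.
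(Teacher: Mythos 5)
Your proof is correct, and parts (1) and (3) coincide with the paper's argument: specialize the free--forgetful adjunction of \S\ref{sec:algebras-operads} to the commutative operad, then compose the two adjunctions and use $\Map_{\CAlg_E}(\Free_E(M),E)\simeq\Map_{\LMod_E}(M,E)$. For part (2) you run the argument in the opposite direction from the paper: you take the commutative algebra $E^X$ as given (identified with the constant limit $\lim_X E$ in $\CAlg_E$) and verify corepresentability of $X\mapsto\Map_{\CAlg_E}(R,E^X)$ directly, whereas the paper fixes $X$ and invokes representability of the accessible, limit-preserving functor $R\mapsto\Map_{\Spa}(X,\Map_{\CAlg_E}(R,E))$ on the presentable category $\CAlg_E$, so that $E^X$ is \emph{constructed} together with its universal property and no separate identification of the algebra structure on $F(\Sigma^\infty_+X,E)$ is needed. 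Both routes are legitimate; yours buys an explicit description of $E^X$ up front at the cost of the bookkeeping you correctly flag, and the paper's buys a shorter argument at the cost of leaving $E^X$ characterized only by its universal property. One sentence of yours is wrong as stated: $\Spa^\op$ is not presentable, so the adjoint functor theorem \cite[5.5.2.9]{lurie-htt} does not apply to $E^{(-)}\co\Spa^\op\to\CAlg_E$; the applicable tool is the representability criterion \cite[5.5.2.2]{lurie-htt} for the presheaf $X\mapsto\Map_{\CAlg_E}(R,E^X)$ on $\Spa$. This does not damage the proof, because the computation $\Map_{\CAlg_E}(R,E^X)\simeq\lim_{x\in X}\Map_{\CAlg_E}(R,E)\simeq\Map_{\Spa}(X,\Map_{\CAlg_E}(R,E))$ that you give immediately afterwards is precisely the verification that this presheaf is represented by $\Map_{\CAlg_E}(R,E)$, which is all the adjunction requires.
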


\begin{proof}
  The adjunction between $E$-modules and commutative algebras is the
  specialization of the adjunction of \S\ref{sec:algebras-operads} to
  the case of the commutative $\infty$-operad. The adjunction between
  spaces and commutative algebras follows from the presentability of
  $\CAlg_E$; the limit-preserving functor
  $\Map_{\Spa}(X,\Map_{\CAlg_E(-,E)})$ is representable by a
  commutative algebra $E^X$. The composite left adjoint is the functor
  \begin{align*}
    M
    &\mapsto \Free_E(M) \\
    &\mapsto \Map_{\CAlg_E}(\Free_E(M), E)\\
    &\simeq \Map_{\LMod_E}(M, E).\qedhere
  \end{align*}
\end{proof}

\begin{cor}
  \label{cor:cospectrumrep}
  The spectrum object $E \in \Sp$ lifts to a cospectrum object
  \[
    Y = \{\Omega^n E\} \in \Sp(\LMod_E^\op),
  \]
  corepresenting the forgetful functor $\LMod_E \to \Sp$.
\end{cor}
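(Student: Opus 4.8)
The plan is to build the cospectrum object by hand and then identify the functor it corepresents, leaning on the stability of $\LMod_E$ at every step. First I would record that, for a commutative ring spectrum $E$, the category $\LMod_E = \Mod_E$ is stable and presentable; hence $\LMod_E^\op$ has pullbacks and a terminal object and the category $\Sp(\LMod_E^\op)$ of \S\ref{sec:yoneda} is defined. Since $\LMod_E$ is stable, its loop functor $\Omega$ is an equivalence with inverse $\Sigma$, so the loop functor of the opposite category, $\Omega_{\LMod_E^\op} = \Sigma_{\LMod_E}$, is also an equivalence. The tower $\cdots \to (\LMod_E^\op)_* \xrightarrow{\Omega} (\LMod_E^\op)_* \to \cdots$ defining $\Sp(\LMod_E^\op)$ is then a tower of equivalences, so evaluation at level $0$ is an equivalence $\Sp(\LMod_E^\op) \xrightarrow{\sim} \LMod_E^\op$. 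Consequently the unique spectrum object lying over $E$ is the sequence $\{\Omega^n E\}_{n \in \mb Z}$, with structure equivalences $\Sigma_{\LMod_E}(\Omega^{n+1}E) \simeq \Omega^n E$ the canonical ones; this is the desired $Y$.

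Next I would compute the corepresented functor. Under the spectral Yoneda embedding of \S\ref{sec:yoneda}, the cospectrum object $Y = \{\Omega^n E\}$ corepresents the functor sending $M \in \LMod_E$ to the $\Omega$-spectrum $\{\Map_{\LMod_E}(\Omega^n E, M)\}_n$, with structure maps induced by the loop--suspension equivalences $\Map_{\LMod_E}(\Sigma A, M) \simeq \Omega \Map_{\LMod_E}(A, M)$. Now $\Omega^n E = E \otimes \mb S^{-n}$ is the free left $E$-module on $\mb S^{-n}$, so the free--forgetful adjunction $E \otimes (-) \co \Sp \rightleftarrows \LMod_E \co U$ gives $\Map_{\LMod_E}(\Omega^n E, M) \simeq \Map_{\Sp}(\mb S^{-n}, UM) \simeq \Omega^\infty \Sigma^n UM$, i.e.\ the $n$th space of the underlying spectrum $UM$. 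It remains only to check that these identifications are compatible with the structure maps on both sides, so that $\{\Map_{\LMod_E}(\Omega^n E, M)\}_n$ is equivalent, naturally in $M$, to $UM$; this is a routine unwinding of the loop--suspension adjunction. Equivalently, one can deduce the corollary as the case of the trivial operad in Proposition~\ref{prop:freerepresenting}, together with the identification $F_{\LMod_E}(E, M) \simeq UM$ of function spectra, which follows from $E$ being the unit of $\LMod_E$.

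There is no substantive obstacle here; the two things to watch are set-theoretic and bookkeeping. Set-theoretically, the spectral Yoneda machinery of \S\ref{sec:yoneda} is set up for small (or presentable) source categories, whereas $\LMod_E^\op$ is neither; this is handled exactly as in the preceding corollary, by restricting to a small full subcategory of $\LMod_E$ containing the relevant objects and closed under pushouts and the initial object, and noting compatibility under enlarging the subcategory, or else by working with the $\Sp$-enrichment of $\LMod_E$ (available because $\LMod_E$ is a module over $\Sp$ in $\Pr^L$), for which the corepresentation statement $F_{\LMod_E}(Y, -) \simeq U$ is immediate from $F_{\LMod_E}(E, M) \simeq UM$. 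The bookkeeping amounts to keeping the two dualities in play straight: spectrum objects versus cospectrum objects, and the forgetful functor $\LMod_E \to \Spa$ versus the forgetful functor $\LMod_E \to \Sp$.
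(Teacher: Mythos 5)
Your argument is correct and rests on the same identification as the paper's: $\Omega^n E$ is the free $E$-module on $S^{-n}$, so $\Map_{\LMod_E}(\Omega^n E, M)\simeq \Omega^\infty\Sigma^n UM$ by the free--forgetful adjunction. The only difference is emphasis: you verify corepresentation for general $M$ (and note the alternative via Proposition~\ref{prop:freerepresenting} for the trivial operad), while the paper runs the same computation at $M=E$ to check that the composite $\Sp(\LMod_E^\op)\to\Sp(\Spa)$ sends $Y$ to $E$ and leaves the corepresentation to the spectral Yoneda machinery.
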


\begin{proof}
  The composite map $\Sp(\LMod_E^\op) \to \Sp(\Spa)$ is given by
  levelwise application of the right adjoint
  $M \mapsto \Map_{\LMod_E}(M,E)$. Applied to $Y = \{\Omega^n E\}$,
  we get the spectrum $\{\Map_{\LMod_E}(\Omega^n E, E)\} \simeq E$.
\end{proof}

\begin{defn}
  Let $\Pow(E)$ be the endomorphism ring of the cospectrum
  \[
    \{\mb \Free_E(\Omega^n E)\} \in \Sp(\CAlg_E^\op)
  \]
  from Corollary~\ref{cor:cospectrumrep}. We refer to $\Pow(E)$ as the
  algebra of \emph{stable power operations} on commutative
  $E$-algebras.
\end{defn}

The underlying ring spectrum of $\Pow(E)$ is
\[
  \holim_n \Sigma^n \wt\Free_E(\Omega^n E)
\]
by Proposition~\ref{prop:stableformula}.

We recall the statement of Theorem~\ref{thm1.1}.
\begin{thm*}
  For any commutative ring spectrum $E$, the algebra of
  stable power operations fitting into a
  diagram of associative ring spectra
  \[
    E \to \Pow(E) \to \End(E).
  \]

  The ring $\Pow(E)$ has a natural action on the underlying
  spectrum of an commutative $E$-algebra in a matter compatible with
  stable cohomology operations, in the sense that there is a canonical
  lift in the following diagram:
  \[
    \xymatrix@C=4pc{
      \Sp^\op \ar[dd]_{E^{(-)}} \ar[r]^-{F(\Sigma^\infty_+(-),E)} &
      \Mod_{\End(E)} \ar[d] \\
      & \Mod_{\Pow(E)} \ar[d] \\
      \CAlg_E \ar[r] \ar@{.>}[ur] &
      \Mod_E
    }
  \]  
\end{thm*}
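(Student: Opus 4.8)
The plan is to assemble the theorem from the machinery built in Sections~\ref{sec:endomorphisms}--\ref{sec:spaces}. The three rings $E$, $\Pow(E)$, and $\End(E)$ are endomorphism rings of three representable functors to spectra: $E$ represents the forgetful functor $\LMod_E \to \Sp$ (via the cospectrum $\{\Omega^n E\}$ of Corollary~\ref{cor:cospectrumrep}, with endomorphism ring $\lim_n \Sigma^n \Omega^n E \simeq E$ by Proposition~\ref{prop:functionspectrum}); $\Pow(E)$ represents the forgetful functor $\CAlg_E \to \Sp$ (via $\{\Free_E(\Omega^n E)\}$, by definition); and $\End(E)$ represents $E^{(-)}\co \Spa^\op \to \Sp$ (via $E$ itself as a spectrum object in $\Spa$). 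The two ring maps will come from Proposition~\ref{prop:functoriality} applied to the two adjunctions recorded in the previous Proposition: the free--forgetful adjunction $\Free_E \dashv (\CAlg_E \to \LMod_E)$, which by Proposition~\ref{prop:functoriality} yields a map $\End_{\Sp(\LMod_E^\op)}(\{\Omega^n E\}) \to \End_{\Sp(\CAlg_E^\op)}(\{\Free_E(\Omega^n E)\})$, i.e.\ $E \to \Pow(E)$; and the adjunction $\Map_{\CAlg_E}(-,E) \dashv (\Spa^\op \to \CAlg_E)$, which similarly yields $\Pow(E) \to \End_{\Sp(\Spa)}(E) = \End(E)$. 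Each application of Proposition~\ref{prop:functoriality} requires only that the relevant categories have pullbacks and a terminal object and that the functor in question is representable by a spectrum object; $\LMod_E$, $\CAlg_E$, and $\Spa$ are all presentable, hence complete, and the representing cospectra are exactly the ones named above.

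For the module-theoretic conclusion, the commutative triangle is obtained by instantiating the diagram of Proposition~\ref{prop:functoriality} twice and pasting. The right adjoint $g\co \LMod_E \to \Spa$ (namely $M \mapsto \Map_{\LMod_E}(M,E)$) with left adjoint $f = \Free_E$ composed with the forgetful functor\,---\,no: more directly, apply Proposition~\ref{prop:functoriality} to the adjunction with $g\co \CAlg_E \to \LMod_E$ to get a commuting square relating $\LMod_{\End(\{\Omega^n E\})} = \LMod_E$ to $\LMod_{\Pow(E)}$ over the functor categories, and apply it again to $g\co \Spa^\op \to \CAlg_E$ to get a square relating $\LMod_{\Pow(E)}$ to $\LMod_{\End(E)}$. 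Pasting the two squares along their common $\LMod_{\Pow(E)}$-edge produces the canonical lift: the composite $\Spa^\op \to \CAlg_E \to \Mod_E$ factors through $\Mod_{\Pow(E)}$, and the composite $\Spa^\op \xrightarrow{F(\Sigma^\infty_+(-),E)} \Mod_{\End(E)} \to \Mod_{\Pow(E)}$ agrees with it. One should check that the $\Mod_{\End(E)}$-module structure on $E^X = F(\Sigma^\infty_+ X, E)$ coming from Proposition~\ref{prop:functoriality} (via $\Spa^\op \to \CC^\op$ with $\CC = \Spa$) is the classical one; this is the content of Theorem~\ref{thm:representableaction}, which identifies the $\End(Y)$-action on a representable functor with the tautological one.

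The main obstacle is bookkeeping coherence: Proposition~\ref{prop:functoriality} gives each square only up to a contractible space of choices, so to paste them one must verify that the two lifts of $\LMod_{\Pow(E)}$\,---\,one as ``modules over $\End$ of the $\LMod_E$-cospectrum pushed forward to $\CAlg_E^\op$,'' one as the target of restriction along $\Spa^\op \to \CAlg_E$\,---\,are compatibly identified. The cleanest route is to realize all three functors simultaneously as represented by a single \emph{compatible system} of cospectra: the cospectrum $\{\Free_E(\Omega^n E)\} \in \Sp(\CAlg_E^\op)$ maps, under the forgetful functor $\CAlg_E \to \LMod_E$ (which on opposite categories is $f^* $-corepresented), to $\{\Free_E(\Omega^n E)\}$ viewed in $\LMod_E$, and the universal map $\{\Omega^n E\} \to \{\Free_E(\Omega^n E)\}$ in $\Sp(\LMod_E^\op)$ (adjoint to the identity) is a map of cospectrum objects inducing $E \to \Pow(E)$; dually, pushing $\{\Free_E(\Omega^n E)\}$ forward along $\Map_{\CAlg_E}(-,E)\co \CAlg_E \to \Spa$ recovers $\{\Map_{\CAlg_E}(\Free_E(\Omega^n E),E)\} \simeq \{\Map_{\LMod_E}(\Omega^n E, E)\} \simeq E$, the spectrum object corepresenting $E^{(-)}$, giving $\Pow(E) \to \End(E)$. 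Because Proposition~\ref{prop:spectralnaturality}(2)--(3) already packages the naturality of $\Ups_A$ in both the algebra variable and the base-category variable, the pasted diagram is forced, and the associativity/unitality of the composite $E \to \Pow(E) \to \End(E)$ follows from functoriality of $\End(-)$ under composition of the two adjunctions.
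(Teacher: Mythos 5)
Your proposal is correct and takes essentially the same route as the paper: the paper's own proof consists of identifying $E$, $\Pow(E)$, and $\End(E)$ as the endomorphism rings of the representable forgetful functors from $\LMod_E$, $\CAlg_E$, and $\Spa^\op$ and then invoking Proposition~\ref{prop:functoriality} for the two adjunctions, exactly as you do. Your additional bookkeeping about pasting the two squares and the compatibility of the representing cospectra is more detail than the paper records, but it is consistent with (and implicit in) its argument.
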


\begin{proof}
  The spectra $\End(E)$, $\Pow(E)$, and $E$ are, respectively,
  the endomorphism rings of the functors $\Spa^\op \to \Sp$, 
  $\CAlg_E \to \Sp$, and $\LMod_E \to \Sp$. Therefore, these functors
  automatically lift to the appropriate categories of modules, and
  thus we can apply Proposition~\ref{prop:functoriality}.
\end{proof}

\section{Fixed weight}
\label{sec:weight}

\begin{defn}
  Suppose that $\OO$ is an operad which is levelwise free and that $E$
  is a commutative ring spectrum. Then we write
  $\Op_\OO^E\langle d\rangle = \StNat^{\langle
    d\rangle}(\mathrm{id})$ for the spectrum parametrizing stable
  power operations of weight $d$ on the underlying spectra of
  $\OO$-algebras in $E$-modules (Definition~\ref{def:weightsummand}):
  \[
    \Op_\OO^E\langle d\rangle = \lim_n \Sigma^n \Sym^d_{\OO}(\Omega^n E)
  \]
  If $\OO$ is the commutative operad, we simply write $\Op^E\langle
  d\rangle$.
\end{defn}

\begin{rmk}
  An element $\theta \in \pi_k \Op_\OO^E\langle d\rangle$ induces
  compatible operations $\pi_{m} A \to \pi_{k+m} A$ as $m$ varies;
  these operations are detected by the forgetful maps
  \[
    \pi_k \Op_\OO^E\langle d\rangle \to \pi_k \Sigma^{-m}
    \Sym^d_{\OO}(\Omega^{-m} E).
  \]
\end{rmk}
Our goal in this section is to analyze $\Op_\OO^E\langle d\rangle$,
using results from Appendix~\ref{sec:equivariant-results} on
equivariant stable homotopy theory. We recall from
Definition~\ref{def:reducedrep} that $\gamma$ is the
$(d-1)$-dimensional reduced permutation representation of the
symmetric group $\Sigma_d$, and from
Proposition~\ref{prop:transitiveclassifying} that the colimit of
$S^{n\gamma}$ is a model for the classifying space $\wt{E\TT}$ for the
family of $\TT$ consisting of subgroups of $\Sigma_d$ that do not act
transitively on $\{1,\dots,d\}$.

\begin{prop}
  \label{prop:opequivariant}
  There are equivalences
  \begin{align*}
    \Op_\OO^E\langle d\rangle
    &\simeq \lim_n E \otimes [\OO(d)/\Sigma_d]^{-n\gamma}\\
    &\simeq \lim_n F_{\Sigma_d}(S^{n\gamma}, E \otimes \OO(d))\\
    &\simeq F_{\Sigma_d}(\wt{E\TT}, \OO(d)_+ \otimes E).
  \end{align*}
\end{prop}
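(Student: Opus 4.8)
The plan is to unwind the definition of $\Op_\OO^E\langle d\rangle$ and then identify the symmetric power functor $\Sym^d_\OO$ in equivariant-stable terms. By definition (Definition~\ref{def:weightsummand}, applied to the identity),
\[
  \Op_\OO^E\langle d\rangle = \lim_n \Sigma^n \Sym^d_\OO(\Omega^n E).
\]
The first step is to recognize that, since $\OO$ is levelwise free, the space $\PP(d)$ parametrizing $d$-ary power functors is $\OO(d)$ with its free $\Sigma_d$-action, so that $\Sym^d_\OO(M) = (\OO(d) \otimes M^{\otimes d})_{h\Sigma_d} = \OO(d)_{h\Sigma_d} \otimes \cdots$; more precisely, for $M$ an $E$-module the $d$-fold smash power over $E$ carries a $\Sigma_d$-action and $\Sym^d_\OO(M) \simeq (\OO(d)_+ \sma_{\Sigma_d} M^{\sma_E d})$. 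Taking $M = \Omega^n E = S^{-n} \otimes E$, the smash power $M^{\sma_E d} \simeq S^{-nd} \otimes E \simeq E \otimes S^{-nd}$, but the $\Sigma_d$-action on $S^{-nd} = (S^{-n})^{\sma d}$ is the permutation action, which as a representation sphere decomposes as $S^{-nd} \simeq S^{-n} \sma S^{-n\gamma}$ with $\gamma$ the reduced permutation representation (Definition~\ref{def:reducedrep}) and the first factor carrying the trivial action. Hence $\Sigma^n \Sym^d_\OO(\Omega^n E) \simeq \Sigma^n(\OO(d)_+ \sma_{\Sigma_d} (E \otimes S^{-n} \sma S^{-n\gamma})) \simeq E \otimes (\OO(d)_+ \sma_{\Sigma_d} S^{-n\gamma})$, which is exactly $E \otimes [\OO(d)/\Sigma_d]^{-n\gamma}$ in the Thom-spectrum notation of the appendix (the $\Sigma_d$-action on $\OO(d)$ being free, homotopy orbits compute the Thom spectrum of $-n\gamma$ over $\OO(d)/\Sigma_d$). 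This gives the first equivalence after passing to the limit over $n$.

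For the second equivalence, I would invoke the equivariant Adams (Spanier--Whitehead) self-duality of representation spheres: for a finite group $G$ and a representation $V$, $S^{-V}$ is dual to $S^{V}$, so $X_+ \sma_{G} S^{-V} \simeq F_G(S^{V}, X)$ when the $G$-action on $X_+$ (here $\OO(d)_+ \otimes E$ with $\Sigma_d$ acting on $\OO(d)$ only) is suitably compatible, or more directly by the norm/transfer identification of homotopy orbits of a free action with $G$-fixed points of a function object. Concretely $\OO(d)_+ \sma_{\Sigma_d} S^{-n\gamma} \otimes E \simeq F_{\Sigma_d}(S^{n\gamma}, \OO(d)_+ \otimes E)$; this is the content of one of the appendix's equivariant-duality results, applied levelwise and compatibly with the bonding maps. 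Taking the limit over $n$ yields the middle expression $\lim_n F_{\Sigma_d}(S^{n\gamma}, E \otimes \OO(d))$.

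The third equivalence is the passage from the tower to its homotopy limit: $\holim_n F_{\Sigma_d}(S^{n\gamma}, E \otimes \OO(d)) \simeq F_{\Sigma_d}(\hocolim_n S^{n\gamma}, E \otimes \OO(d))$, since $F_{\Sigma_d}(-, Y)$ converts colimits to limits. By Proposition~\ref{prop:transitiveclassifying}, $\hocolim_n S^{n\gamma} \simeq \wt{E\TT}$ is the classifying space for the family $\TT$ of non-transitive subgroups, giving $\Op_\OO^E\langle d\rangle \simeq F_{\Sigma_d}(\wt{E\TT}, \OO(d)_+ \otimes E)$. The main obstacle I anticipate is the second step: one must take genuine care that the equivariant duality $X_+ \sma_G S^{-V} \simeq F_G(S^V, X)$ is being applied in a naive (Borel) equivariant setting where it is really a statement about the free action on $\OO(d)$, and that all three identifications are natural enough in $n$ to commute with the structure maps of the limit system (and with the bonding maps $\Sigma(\Omega^n M) \to \Omega^{n-1} M$ that define the first coderivative). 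Checking this naturality — rather than any single equivalence — is where the real work lies; everything else is bookkeeping with representation spheres and the appendix's computation of $\hocolim_n S^{n\gamma}$.
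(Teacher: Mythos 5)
Your proposal is correct and follows essentially the same route as the paper: expand $\Sym^d_\OO$ on the free module $\Omega^n E$, split the permutation sphere as $S^{-n}\sma S^{-n\gamma}$ to get the Thom spectrum description, pass from homotopy orbits to fixed points using the Adams isomorphism/transfer for the free $\Sigma_d$-action on $\OO(d)$, and identify $\lim_n F_{\Sigma_d}(S^{n\gamma},-)$ with $F_{\Sigma_d}(\wt{E\TT},-)$ via Proposition~\ref{prop:transitiveclassifying}. The only quibble is that the duality/transfer step you flag as the main obstacle is not outsourced to the appendix but handled inline in the paper exactly as you sketch it, via the Adams isomorphism for the free action.
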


\begin{proof}
  By definition,
  \[
    \Op_\OO^E\langle d\rangle \simeq \lim_n \Sigma^n \Sym^d_{\OO}(E
    \otimes \Omega^n S^0).
  \]
  Expanding the symmetric power functor on such a free $E$-module $E \otimes
  \Omega^n S^0$, we get
  \[
    \Sigma^n E \otimes \left[\OO(d)_+ \otimes_{\Sigma_d}
      (S^{-n})^{\otimes d}\right]\simeq E \otimes (\OO(d)_+
    \otimes_{\Sigma_d} S^{-n\gamma}),
  \]
  which is homotopy equivalent to the Thom spectrum
  \[
    E \otimes \left[\OO(d)/\Sigma_d\right]^{-n\gamma}.
  \]
  This recovers the first description.

  To recover the second, we note that the freeness of the action of
  $\Sigma_d$ on $\OO(d)$ implies that we have the Adams isomorphism
  \[
    [E \otimes S^{-n\gamma} \otimes \OO(d)_+]_{\Sigma_d} \to
    [E \otimes S^{-n\gamma} \otimes \OO(d)_+]^{\Sigma_d}.
  \]
  given by the transfer. The right-hand side is equivalent to the
  function spectrum
  \[
    F_{\Sigma_d}(S^{n\gamma}, E \otimes \OO(d)_+).
  \]
  Taking limits in $n$ gives $F_{\Sigma_d}(\wt{E\TT}, E
  \otimes \OO(d)_+)$, as desired.
\end{proof}

In particular, the fact that $\OO(d)$ is acted on freely identifies
$(E\Sigma_d)_+ \otimes \OO(d) \otimes E$ with $\OO(d) \otimes E$, and
thus we arrive at the following.
\begin{cor}
  The spectrum $\Op_\OO^E\langle d\rangle$ is naturally equivalent to
  the opposite $\TT$-Tate spectrum $(E \otimes \OO(d))^{t^\op \TT}$
  (Definition~\ref{def:tateandopposite}).
\end{cor}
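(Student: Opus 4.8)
The plan is to assemble the three equivalent descriptions of $\Op_\OO^E\langle d\rangle$ established in Proposition~\ref{prop:opequivariant}—as a limit of Thom spectra, as a limit of equivariant function spectra, and finally as $F_{\Sigma_d}(\wt{E\TT}, \OO(d)_+ \otimes E)$—into the form appearing in Definition~\ref{def:tateandopposite} of the opposite $\TT$-Tate spectrum. The starting observation is the one already flagged just before the corollary: since $\Sigma_d$ acts freely on $\OO(d)$, the natural map $(E\Sigma_d)_+ \otimes \OO(d)_+ \to \OO(d)_+$ (collapsing $E\Sigma_d$ to a point) is a $\Sigma_d$-equivariant equivalence, because over a free orbit it is the map $(E\Sigma_d)_+ \wedge (\Sigma_d)_+ \to (\Sigma_d)_+$, which is an equivalence of free $\Sigma_d$-spectra. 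Smashing with $E$ (with trivial action) we get that $(E\Sigma_d)_+ \otimes \OO(d) \otimes E \simeq \OO(d) \otimes E$ as $\Sigma_d$-spectra, so the cofree/Borel completion of $\OO(d)\otimes E$ agrees with $\OO(d)\otimes E$ itself.

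Next I would match the function-spectrum description with the definition of the opposite Tate construction. The opposite $\TT$-Tate spectrum of a $\Sigma_d$-spectrum $Z$ is, per Definition~\ref{def:tateandopposite}, the cofiber (or the relevant fiber/cofiber, depending on the sign conventions set up there) of the norm-type map relating $F_{\Sigma_d}(E\TT_+, Z)$-type constructions, built from the cofiber sequence $E\TT_+ \to S^0 \to \wt{E\TT}$; in particular $Z^{t^\op\TT}$ is modeled by $F_{\Sigma_d}(\wt{E\TT}, Z^{\text{cofree}})$ where $Z^{\text{cofree}}$ is the Borel completion. Here $\TT$ is precisely the family of non-transitive subgroups, so $\wt{E\TT}$ is exactly the object appearing in Proposition~\ref{prop:opequivariant}, identified there as $\colim_n S^{n\gamma}$. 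Applying this with $Z = \OO(d)\otimes E$ and using the freeness identification $Z^{\text{cofree}} \simeq Z$, the definition of $(E\otimes\OO(d))^{t^\op\TT}$ unwinds to exactly $F_{\Sigma_d}(\wt{E\TT}, \OO(d)_+ \otimes E)$, which is the third expression in Proposition~\ref{prop:opequivariant}. Naturality is automatic since every identification used (the freeness equivalence, the cofiber sequence defining $\wt{E\TT}$, and the function-spectrum adjunctions) is natural in the spectrum-with-$\Sigma_d$-action $\OO(d)\otimes E$.

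The main obstacle is bookkeeping rather than mathematical depth: I need to be careful that the model of $\wt{E\TT}$ used in Appendix~\ref{sec:equivariant-results} (the colimit of representation spheres $S^{n\gamma}$, via Proposition~\ref{prop:transitiveclassifying}) literally coincides with whatever cofiber-of-$E\TT_+$ model is used in Definition~\ref{def:tateandopposite}, and that the direction of the limit/colimit and any suspension shifts are compatible—the limit $\lim_n F_{\Sigma_d}(S^{n\gamma}, -)$ in Proposition~\ref{prop:opequivariant} must be recognized as $F_{\Sigma_d}(\colim_n S^{n\gamma}, -) = F_{\Sigma_d}(\wt{E\TT}, -)$, which is where the "opposite" (as opposed to ordinary) Tate construction enters, since one is mapping *out of* $\wt{E\TT}$ rather than smashing with it. Once those conventions are lined up with the appendix, the corollary is essentially a restatement of the last line of Proposition~\ref{prop:opequivariant} combined with the freeness identification, so the proof should be one or two sentences.
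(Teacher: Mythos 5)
Your argument is correct and is essentially the paper's: the corollary follows by combining the third equivalence of Proposition~\ref{prop:opequivariant} with the observation that freeness of the $\Sigma_d$-action on $\OO(d)$ makes $(E\Sigma_d)_+ \otimes \OO(d) \otimes E \to \OO(d) \otimes E$ an equivalence, so that the definition $Z^{t^\op\TT} = F(\wt{E\TT}, (E\Sigma_d)_+ \otimes Z)$ reduces to the displayed function spectrum. One small terminological slip: the object $(E\Sigma_d)_+ \otimes Z$ in the opposite Tate construction is the free (Borel) approximation rather than the cofree completion $F((E\Sigma_d)_+, Z)$, but since both agree with $Z$ when $Z$ is free, your conclusion is unaffected.
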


\begin{thm}
\label{thm:opidentification}
  The spectrum $\Op_\OO^E\langle d\rangle$ is $p$-local if $d = p^k$ for
  some prime $p$, and is trivial otherwise.

  For any prime number $p$, there is a $p$-local equivalence between
  the spectrum of stable power operations of weight $p$ and a
  desuspended Tate spectrum:
  \[
    \Op_\OO^E\langle p\rangle \simeq \Sigma^{-1} (\OO(p) \otimes
    E)^{t\Sigma_p}_{(p)} \simeq \Sigma^{-1} \left[(\OO(p) \otimes
    E)^{tC_p}\right]^{\mb F_p^\times}
  \]
\end{thm}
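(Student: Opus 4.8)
The plan is to start from the identification in the preceding corollary, namely that $\Op_\OO^E\langle d\rangle$ is the opposite $\TT$-Tate spectrum of $E \otimes \OO(d)$, and to reduce the computation of the Tate construction for the family $\TT$ of non-transitive subgroups of $\Sigma_d$ to a computation over the Sylow subgroups and their normalizers. First I would invoke the fact that a subgroup $H \leq \Sigma_d$ acts transitively on $\{1,\dots,d\}$ only if $d$ divides $|H|$; combined with the norm cofiber sequence relating homotopy orbits, homotopy fixed points, and the Tate construction, this forces $\Op_\OO^E\langle d\rangle$ to be built out of contributions indexed by those subgroups that *do* act transitively, the minimal such being cyclic of prime order. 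When $d$ is not a prime power, no $p$-subgroup of $\Sigma_d$ is transitive, so a standard transfer/idempotent argument (the Tate construction for a family is $p$-locally controlled by $p$-subgroups, and $\widehat{H}(\Sigma_d; -)$ splits according to Sylow subgroups) shows every localization vanishes and hence the spectrum is contractible. When $d = p^k$ this same argument shows the spectrum is already $p$-local since it is annihilated by the prime-to-$p$ part, giving the first paragraph of the statement.

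For the second paragraph, with $d = p$, I would work $p$-locally throughout. Here the family $\TT$ consists of the subgroups of $\Sigma_p$ that do not act transitively, i.e.\ exactly the subgroups not containing a $p$-cycle; since any transitive subgroup of $\Sigma_p$ contains the unique (up to conjugacy) cyclic group $C_p$, the relevant Tate construction is the classical $C_p$-Tate construction, but taken relative to the normalizer $N_{\Sigma_p}(C_p) \cong C_p \rtimes \mb F_p^\times$. Using the Adams isomorphism and the description $\Op_\OO^E\langle p\rangle \simeq F_{\Sigma_p}(\wt{E\TT}, \OO(p)_+ \otimes E)$ from Proposition~\ref{prop:opequivariant}, together with the fact that $\wt{E\TT}$ for this family is the cofiber of $(E\Sigma_p)_+ \to S^0$, I would identify $F_{\Sigma_p}(\wt{E\TT}, \OO(p)_+ \otimes E)$ with the (opposite) Tate spectrum. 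The passage to $C_p$ uses that the $p$-localized Tate construction over $\Sigma_p$ factors through a Sylow subgroup, and the residual $\mb F_p^\times = \Sigma_p/C_p \cap N(C_p)$-action records the Weyl group action on $(\OO(p)\otimes E)^{tC_p}$; taking $\mb F_p^\times$-homotopy fixed points (equivalently, since $|\mb F_p^\times| = p-1$ is invertible $p$-locally, the $\mb F_p^\times$-invariants) recovers the second displayed equivalence. The desuspension $\Sigma^{-1}$ is precisely the shift built into the $t^\op$ versus $t$ conventions, which I would trace through Definition~\ref{def:tateandopposite}.

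The main obstacle I anticipate is the careful bookkeeping in the second equivalence: one must correctly match the opposite Tate spectrum $(E \otimes \OO(p))^{t^\op\TT}$ relative to the family $\TT$ of non-transitive subgroups with the genuine $\Sigma_p$-Tate construction, and then argue that $p$-locally this reduces to $C_p$ with a residual $\mb F_p^\times$-action. The delicate points are: (i) verifying that $\wt{E\TT}$ really does agree, after $p$-localization, with the analogous classifying space for the family of subgroups of $C_p \rtimes \mb F_p^\times$ containing no $p$-cycle — this is where the hypothesis that $\OO$ is levelwise free enters, since freeness of the $\Sigma_p$-action on $\OO(p)$ lets us replace fixed points by homotopy fixed points via the Adams isomorphism and sidesteps the distinction between genuine and Borel constructions; and (ii) keeping track of the single suspension shift, which is easy to get wrong by a sign in the exponent of $S^{n\gamma}$ versus $S^{-n\gamma}$. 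Everything else — the vanishing for non-prime-powers, the $p$-locality for prime powers — follows formally from the Sylow/transfer splitting of Tate cohomology and the observation about divisibility of orders of transitive subgroups, and I would relegate those to a short lemma.
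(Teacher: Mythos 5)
Your outline follows the same route as the paper, whose proof of this theorem consists of assembling results from its appendix: the identification $\Op_\OO^E\langle d\rangle \simeq (E\otimes\OO(d))^{t^\op\TT}$, a transfer argument for the vanishing and $p$-locality statements, and a reduction to $C_p$ with Weyl-group invariants when $d=p$. For the first paragraph your mechanism differs slightly but harmlessly: you argue via orders of transitive subgroups and restriction to a Sylow $p$-subgroup (which is non-transitive unless $d=p^k$, so the restricted family becomes the family of all subgroups and the Tate construction dies there), whereas the paper computes $d(\TT)=\gcd_{0<k<d}\binom{d}{k}$, the gcd of the indices of the maximal non-transitive subgroups $\Sigma_k\times\Sigma_{d-k}$, and feeds this into a Burnside-ring completion argument (Lemma~\ref{lem:indexvanishing} together with Lemma~\ref{lem:tatesplitting}). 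Both are transfer arguments and both work. Your parenthetical that the minimal transitive subgroups are ``cyclic of prime order'' is false for general $d$ (e.g.\ $C_6\subset\Sigma_6$ is minimal transitive), but nothing in your argument depends on it.

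The one genuine soft spot is the suspension. You write that the $\Sigma^{-1}$ is ``precisely the shift built into the $t^\op$ versus $t$ conventions'' and can be traced through Definition~\ref{def:tateandopposite}; it cannot. That definition contains no shift: $Z^{t\FF}=\wt{E\FF}\otimes F(EG_+,Z)$ and $Z^{t^\op\FF}=F(\wt{E\FF},EG_+\otimes Z)$ are a priori different constructions, and the equivalence $Z^{t\FF}\simeq\Sigma\, Z^{t^\op\FF}$ is Warwick duality (Theorem~\ref{thm:warwickduality}, imported from Greenlees) --- a theorem you must invoke, not a bookkeeping identity about conventions. Relatedly, the levelwise freeness of $\OO$ has already been spent by this point, in Proposition~\ref{prop:opequivariant} (the Adams isomorphism converting orbits to fixed points); it plays no role in comparing the family $\TT$ with its restriction to $C_p$, which is handled by the double-coset and transfer argument of Lemma~\ref{lem:transferrestrict} and Lemma~\ref{lem:weylinvariants}. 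With Warwick duality supplied explicitly in place of the claimed convention, your proof is complete and coincides with the paper's.
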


\begin{proof}
  The first result is is
  Corollary~\ref{cor:transitivetatevanishing}, and the second is
  Proposition~\ref{prop:tateandopposite}.
\end{proof}

When $p=2$, this is essentially \cite[16.1]{greenlees-may-tate} and at
odd primes it is closely related to \cite[II.5.3]{bmms-hinfty}.

\begin{rmk}
  We can rephrase this in terms of Goodwillie calculus: for any $d$,
  the first coderivative $\mb D^1(\Sym^d_\OO)$ is identified with the
  opposite Tate spectrum $(E \otimes \OO(d)_+)^{t^\op \TT}$.
\end{rmk}

Our identification of the spectrum $\Op^E_\OO\langle p\rangle$ of
operations with a Tate spectrum allows us to calculate using spectral
sequence methods, because $S^{n\gamma}$ is the $n(d-1)$-skeleton in a
model for $\wt{EC}_p$. The map from stable power operations of degree $k$
and weight $d$ to power operations $\pi_m \to \pi_{k+m}$ of weight $d$
is induced by the map
\[
  \pi_k \lim_n F_{C_p}(S^{n\gamma}, E \otimes \OO(d)) \to 
  \pi_k F_{C_p}(S^{m\gamma}, E \otimes \OO(d)).
\]

\begin{rmk}
  It should be possible to assemble these into a collection of
  compatible group homology spectral sequences that eventually
  identify with the Tate spectral sequence. In the case $p=2$, this is
  more immediate: there are natural cofiber sequences
  \[
    (\Sigma_2)_+ \otimes S^{m\gamma} \to S^{m\gamma} \to S^{(m+1)\gamma}
  \]
  that give rise to a cellular filtration and a spectral sequence with
  $E_1$-term
  \[
    E^1_{p,q} = E_q(\OO(2)).
  \]
  The $E_2$-term recovers the Tate cohomology
  \[
    \widehat{H}^{1-p}(\Sigma_2; E_q(\OO(2))),
  \]
  and the spectral sequence converges to
  $\pi_{p+q} \Op_\OO^E\langle 2\rangle$. If the $E_1$-term is truncated by
  sending those terms with $p < m$ to zero, the spectral sequence
  converges to the homotopy groups of $\Sigma^m \Sym^2_\OO(\Omega^m
  E)$.
\end{rmk}

\section{Weight $p$ operations}
\label{sec:p-power}

In this section we will give example calculations with the results of
the previous sections.

\begin{exam}
  Suppose $H$ is the mod-$p$ Eilenberg--Mac Lane spectrum and $\OO$ is
  the commutative operad. The Tate spectral sequence collapses, and we
  find
  \[
    \pi_k \Op^{H\mb F_p}\langle p\rangle \cong
    \begin{cases}
      \mb F_p &\text{if }k \equiv 0,-1 \mod 2(p-1),\\
      0 &\text{otherwise.}
    \end{cases}
  \]
  In particular, when $p=2$ and $k$ is arbitrary there is a unique
  nonzero stable weight-$2$ operation $Q^k$ that increases degree by
  $k$---the Dyer--Lashof operation of the same name. This is
  represented by the nonzero element in the Tate cohomology spectral
  sequence in degree $k$, and goes to zero when we truncate the
  spectral sequence to recover operations on elements in any degree
  $m < k$.

  At odd primes, there are generating weight-$p$ operations $P^s$ in
  degree $2s(p-1)$ and $\beta P^s$ in degree $2s(p-1)-1$. This carries
  the usual warning that $\beta P^s$ is just notation: $\beta$ itself
  is not an operation, and $\beta P^s$ is not determined by $P^s$.
\end{exam}

\begin{exam}
  The case where $E = \mb S$ is the sphere spectrum is governed by the
  Segal conjecture. The spectrum of stable power operations of weight
  $p$ is $\Sigma^{-1} \mb S^{t\Sigma_p}_{(p)}$, which is the
  desuspension $\Sigma^{-1} \mb S^\wedge_p$ of the $p$-complete
  sphere. In particular, there is a certain generating stable power
  operation of degree $-1$ which we denote by $c$, and all the stable
  power operations of weight $p$ on commutative $\mb S$-algebras are
  of the form $x \mapsto \alpha c(x)$ for an element
  $\alpha \in \pi_* (\mb S)^\wedge_p$. This operation lifts $Q^{-1}$
  at the prime $2$, or $\beta P^0$ at odd primes, to an operation on
  homotopy groups.

  At the prime $2$, we can use the Tate spectral sequence to note that
  $c$ has the following properties:
  \begin{enumerate}
  \item On elements in nonnegative degrees, the operation $c$ acts
    trivially.
  \item Consider the Tate cohomology spectral sequence
    \[
    \widehat H^s(C_2; \pi_t \mb S) \Rightarrow \pi_{t-s} (\mb S)^\wedge_2.
    \]
    The generator of the $E_1$-term of the Tate spectral sequence in
    filtration $s$, which is nontrivial at $E_2$ if and only if $s$ is
    even, maps to the squaring operation
    $\pi_{-(1+s)} \to \pi_{-2(1+s)}$.
  \item In particular, the generator $c$ is represented by the
    generator of $\widehat H^0(C_2; \pi_0 \mb S)$. For a commutative
    $\mb S$-algebra $A$ and an element $x \in \pi_{-1}(A)$, we have
    $c(x) = x^2$.
  \item Further information about $c$ can be extracted from further
    information about the Tate cohomology spectral sequence: these are
    related to Mahowald's root invariants \cite{mahowald-ravenel} and
    the stable homotopy groups of stunted projective spaces. For
    example, for a commutative $\mb S$-algebra $A$ and an element
    $x \in \pi_{-2}(A)$, we have the identity $2 c(x) = \eta x^2$
    (where $\eta \in \pi_1(\mb S)$ is the Hopf invariant element); for
    a commutative $\mb S$-algebra $A$ and an element
    $x \in \pi_{-3}(A)$, we have the identities $4 c(x) = \eta^2 x^2$
    and $\eta c(x) = \nu x^2$.
  \end{enumerate}
\end{exam}

\begin{exam}
  Suppose $E$ is the complex $K$-theory spectrum $K$ and $\OO$ is the
  commutative operad. Then
  $\Sigma^{-1} E^{t\Sigma_p}_{(p)} \simeq \Sigma^{-1} K \otimes \mb
  Q_p$, the $p$-adic rationalization of $K$ by
  \cite[19.1]{greenlees-may-tate}. Even though this group of
  \emph{stable} operations is torsion-free, the action on any
  \emph{particular} homotopy group $\pi_k$ factors through a torsion
  quotient isomorphic to $\mb Q_p / \mb Z_p$, and so the operations
  always take torsion values.

  More generally, McClure has given a formula for the $p$-adic
  completion of the free $K$-algebra \cite[\S IX]{bmms-hinfty}, and
  this can be used to show that
  \[
    \Op^K\langle d\rangle \cong \begin{cases}
      K &\text{if }d=1,\\
      \Sigma^{-1} K \otimes \mb Q_p / \mb Z_p &\text{if }d = p^k,\\
      0 &\text{otherwise.}
    \end{cases}
  \]
  If we replace $K$ by its localization $K_{(p)}$, we $p$-localize the
  result, which eliminates the summands for $d \neq p^k$.
\end{exam}

\begin{exam}
  In the case of the $E_n$-operads, $\OO(p)_+$ is a finite free
  $\Sigma_p$-complex and so the Tate spectrum of $E \otimes \OO(p)_+$
  vanishes. There are no interesting stable operations for
  $E_n$-algebras.
\end{exam}

\begin{exam}
  If $E$ is a Lubin--Tate cohomology theory, we can carry out the
  above construction in the $K(n)$-local category. However, Greenlees
  and Sadofsky have shown that Tate spectra vanish in the $K(n)$-local
  category \cite{greenlees-sadofsky}, and so there are no stable
  $K(n)$-local weight-$p$ operations.
\end{exam}

\section{Operations in mod-$p$ homology}
\label{sec:homology}

In the case of ordinary mod-$p$ homology, we can make use of the
following complete calculation of the mod-$p$ homology of free
algebras.

\begin{defn}
  Let $p$ be a prime. An \emph{algebra with Dyer--Lashof operations}
  is a graded-commutative $\mb F_p$-algebra $A$ equipped with
  operations of the following type.

  \begin{description}
  \item[Case I: $p=2$.]  There are Dyer--Lashof operations
    \[
      Q^s\co \pi_kA \to \pi_{k+s} A
    \]
    satisfying the following relations.
    \begin{description}
    \item[Additivity:] $Q^s(x+y) = Q^s(x) + Q^s(y)$.
    \item[Instability:] $Q^s(x) = 0$ if $s < |x|$.
    \item[Squaring:] $Q^s(x) = x^2$ if $s = |x|$.
    \item[Unitality:] $Q^s(1) = 0$ if $s > 0$.
    \item[Cartan formula:] $Q^s(xy) = \sum_{i+j=s} Q^i(x) Q^j(y)$.
    \item[Adem relations:] $Q^r Q^s(x) = \sum \binom{i-s-1}{2i-r}
      Q^{r+s-i} Q^i x.$
    \end{description}
  \item[Case II: $p>2$.] There are operations
    \begin{align*}
      P^s\co& \pi_k A \to \pi_{k + 2s(p-1)} A,\\
      \beta P^s\co& \pi_k A \to \pi_{k+2s(p-1) - 1} A
    \end{align*}
    (written as $\beta^\epsilon P^s$ for $\epsilon \in \{0,1\}$)
    satisfying the following relations.
    \begin{description}
    \item[Additivity:] $\beta^\epsilon P^s(x+y) = \beta^\epsilon
      P^s(x) + \beta^\epsilon Q^s(y)$.
    \item[Instability:] $\beta^\epsilon P^s(x) = 0$ if $2s+\epsilon > |x|$.
    \item[Squaring:] $P^s(x) = x^p$ if $2s = |x|$.
    \item[Unitality:] $P^s(1) = 0$ if $s > 0$.
    \item[Cartan formula:]
      \begin{align*}
        P^s(xy) &= \sum_{i+j=s} P^i(x) P^j(y)\\
        \beta P^s(xy) &= \sum_{i+j=s} \beta P^i(x) P^j(y)+
                        \sum_{i+j=s} P^i(x) \beta P^j(y)\\ 
      \end{align*}
    \item[Adem relations:]
      \begin{align*}
        P^r P^s(x)
        &= \sum (-1)^{r+i} \binom{(p-1)(i-s)-1}{pi-r}
          P^{r+s-i} P^i(x)\\
        P^r \beta P^s(x)
        &= \sum (-1)^{r+i} \binom{(p-1)(i-s)}{pi-r}
          \beta P^{r+s-i} P^i(x)\\
        &- \sum (-1)^{r+i} \binom{(p-1)(i-s)-1}{pi-r-1}
          P^{r+s-i} \beta P^i(x)                           
      \end{align*}
      These identities still hold after formally applying $\beta$
      on the left and eliminating terms involving $\beta \beta$.
    \end{description}
  \end{description}
\end{defn}

\begin{thm}[{\cite[IX.2.1]{bmms-hinfty}}]
  \label{thm:freehomology}
  Let $H = H\mb F_p$ be the mod-$p$ Eilenberg--Mac Lane spectrum. Then
  for any spectrum $X$, there is a natural isomorphism between the
  homotopy of the free commutative $H$-algebra
  $\Free(H \otimes X) \simeq H \otimes \Free(X)$ and the free algebra
  $\mb Q(H_* X)$ in the category of algebras with Dyer--Lashof
  operations.

  If $\{e_i\}$ is a basis of a graded vector space $V$, then
  $\mb Q(V)$ is a free graded-commutative algebra on \emph{admissible
    monomials of excess $e(I) \geq |e_i|$}: those monomials
  $Q^{s_1} \dots Q^{s_r} e_i$ (resp.
  $\beta^{\epsilon_1} P^{s_1} \dots \beta^{\epsilon_r} P^{s_r} e_i$ if
  $p > 2$) to which the Adem relations and instability relations
  cannot be applied.
\end{thm}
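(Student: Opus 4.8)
The plan is to identify the homotopy groups of the free commutative $H$-algebra with a free object in the category of algebras with Dyer--Lashof operations, and for this the essential input is McClure's computation in \cite{bmms-hinfty}. First I would reduce to the case $X = S^m$, a single sphere, using the fact that $\Free$ commutes with coproducts and that $H \otimes (-)$ commutes with filtered colimits, so that the free algebra on a wedge of spheres is built from the free algebras on the individual sphere summands; a general spectrum $X$ is obtained as a filtered colimit of finite wedges of (shifted) spheres up to the necessary connectivity, and both sides of the claimed isomorphism send such colimits to colimits. The point is that both the homotopy of $\Free(H \otimes X)$ and the construction $\mathbf{Q}(H_*X)$ are determined by their values on spheres together with compatibility with the relevant colimits and (at the prime $p$) the monoidal structure.

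Next I would invoke the classical computation of $H_*(D_k S^m; \mb F_p)$, the mod-$p$ homology of the extended powers of a sphere, which gives the additive structure of $H_* \Free(S^m) = \bigoplus_{k \geq 0} H_* (D_k S^m)$ as a sum over admissible monomials of excess $\geq m$ applied to the fundamental class, together with their products---this is exactly the content of \cite[Chapter IX]{bmms-hinfty} and is where the Nishida relations, the instability bound $e(I) \geq |e_i|$, and the free graded-commutative structure on the admissibles come from. The operations $Q^s$ (resp. $\beta^\epsilon P^s$) acting on $\pi_* \Free(H \otimes X)$ are induced by the $H_\infty$-structure on $H$, i.e. by the action maps $D_p(H \otimes X) \to H \otimes X$ composed with appropriate maps of extended powers; the relations they satisfy (additivity, instability, the squaring formula $Q^{|x|}x = x^2$, unitality, the Cartan formula, and the Adem relations) are precisely the Dyer--Lashof relations established in \cite{bmms-hinfty}, now transported from $E$-homology of infinite loop spaces to $\pi_*$ of commutative $H$-algebras via the equivalence $\Free(H \otimes X) \simeq H \otimes \Free(X)$ and the identification of $\pi_*(H \otimes \Free(X))$ with $H_*$ of the free $E_\infty$-space on $X$. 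This identifies the target $\mathbf{Q}(H_*X)$: by construction it is the free algebra with Dyer--Lashof operations on the graded vector space $H_*X$, so by the universal property there is a canonical map $\mathbf{Q}(H_*X) \to \pi_* \Free(H \otimes X)$, and it is an isomorphism because on spheres both sides have the stated basis of admissible monomials of excess $\geq |e_i|$, freely generating a graded-commutative algebra.

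Finally I would check naturality in $X$ and compatibility with the colimit reduction: the universal map $\mathbf{Q}(H_*X) \to \pi_* \Free(H \otimes X)$ is natural because both functors are, and since it is an equivalence on spheres and both functors preserve the relevant colimits, it is an equivalence in general. The main obstacle I anticipate is not the homotopy-theoretic bookkeeping but rather making precise the translation between the classical Dyer--Lashof operations---originally defined on $H_*$ of $E_\infty$-spaces or on $\pi_*$ of $E_\infty$-ring spectra under $H$---and the operations as they act here on $\pi_*$ of commutative $H$-algebras, and verifying that the instability, squaring, and unitality relations take the stated form in this homological (rather than cohomological) normalization; once the dictionary is fixed, the theorem is a repackaging of \cite[IX.2.1]{bmms-hinfty}, which is why the statement is attributed there.
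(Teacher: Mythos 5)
The paper does not prove this statement: it is imported verbatim by citation to \cite[IX.2.1]{bmms-hinfty}, so there is no internal proof to compare against. Your sketch is essentially a faithful reconstruction of the argument in the cited source, and its skeleton is sound: the bridge is the identification $\pi_* \Free(H \otimes X) \cong \bigoplus_{d \geq 0} H_*(D_d X)$ of the homotopy of the free commutative $H$-algebra with the mod-$p$ homology of extended powers, after which the computation of $H_*(D_d S^m)$ and the verification of the Dyer--Lashof relations are exactly the content of BMMS Chapter IX. One step is imprecise as written: a general spectrum is a filtered colimit of \emph{finite spectra}, not of finite wedges of spheres, so the reduction to $X = S^m$ does not go through cell structures on $X$. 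The clean route is to observe that, since $\mb F_p$ is a field, every $H\mb F_p$-module splits as a wedge of suspensions of $H\mb F_p$, so $H \otimes X \simeq \bigvee_i \Sigma^{|e_i|} H$ for a basis $\{e_i\}$ of $H_*X$, and $\Free$ carries this wedge (a coproduct of $H$-modules) to a coproduct of commutative $H$-algebras, matching the tensor decomposition of $\mb Q(H_*X)$; filtered colimits then handle the infinite case. Two smaller remarks: the Nishida relations you mention are not needed for the statement as given (they govern the Steenrod action, which the theorem does not assert); and the dictionary issue you flag at the end is real but already resolved in the normalization of the definition preceding the theorem in the paper, where the instability bound is $e(I) \geq |e_i|$ in homological grading, which is what permits the negative-degree operations exploited later in the inverse-system argument for $\Pow(H)$.
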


\begin{thm}
  The ring $\pi_* \Pow({H})$ of stable power operations for commutative
  $H$-algebras is the completion $(\mf B^*)^\wedge$ of Mandell's
  algebra of generalized Steenrod operations with respect to the
  excess filtration.
\end{thm}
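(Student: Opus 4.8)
The plan is to compute $\pi_*\Pow(H)$ directly from the formula
\[
  \Pow(H) \simeq \holim_n \Sigma^n \wt\Free_H(\Omega^n H)
\]
supplied by Proposition~\ref{prop:stableformula}, and to identify the result with the completed algebra $(\mf B^*)^\wedge$. The key input is Theorem~\ref{thm:freehomology}: for each $n$, the homotopy of $\Sigma^n\wt\Free_H(\Omega^n H) \simeq \Sigma^n \wt\Free_H(\Sigma^{-n}H)$ is (after reindexing) the positive-weight part of the free algebra $\mb Q$ on a one-dimensional class $e_{-n}$ in degree $-n$, i.e.\ the $\mb F_p$-span of admissible monomials $Q^{s_1}\cdots Q^{s_r}e_{-n}$ (resp.\ the $\beta^\epsilon P$-analogue) of excess $\geq -n$, with $r \geq 1$. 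First I would set up the tower: the bonding maps $\Sigma^{n+1}\wt\Free_H(\Omega^{n+1}H) \to \Sigma^{n}\wt\Free_H(\Omega^{n}H)$ are induced by the suspension $\Omega^{n+1}H \to \Omega\Omega^n H$ together with $\Sigma$, and on homotopy they send an admissible monomial on $e_{-(n+1)}$ either to the corresponding admissible monomial on $e_{-n}$ or to zero, the latter exactly when the excess condition fails at level $n$ (an admissible monomial of excess exactly $-n$ on $e_{-n}$ has no preimage, since on $e_{-(n+1)}$ it would have excess $-n > -(n+1)$ but must become inadmissible or trivial). Thus the tower is one of graded abelian groups whose transition maps are surjective with explicitly described kernels, so $\lim^1$ vanishes and $\pi_*\Pow(H) = \lim_n \pi_*\Sigma^n\wt\Free_H(\Omega^n H)$.

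Next I would recall from \cite[\S5]{mandell-einftypadic} the description of $\mf B^*$: it is generated by the operations $P^s$ (and $\beta$), including $P^0$ which is \emph{not} the identity, subject to Adem relations and the Cartan formula but with \emph{no} instability relation; equivalently, $\mf B^*$ acts on $\pi_*$ of any commutative $H$-algebra and $\mf B^n$ is spanned by admissible monomials of any excess. The excess filtration $F^j\mf B^* \subset \mf B^*$ is by the span of admissibles of excess $\geq j$ (a decreasing filtration; lower excess is "smaller"), and $(\mf B^*)^\wedge = \lim_j \mf B^*/F^{-j}\mf B^*$ — wait, I should be careful about the direction. The completion is with respect to the excess \emph{decreasing to} $-\infty$: $(\mf B^*)^\wedge = \lim_n \mf B^*/(\text{admissibles of excess} < -n)$. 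The point is that an element of $(\mf B^*)^\wedge$ is a possibly-infinite sum of admissible monomials which is \emph{locally finite}: in each fixed degree only finitely many admissibles of each bounded-below excess occur, and the completion allows infinitely many of unboundedly negative excess. The claim is that this $\lim$ matches the $\lim_n$ computed above, under the map that sends a stable operation $\theta$, viewed as a compatible family acting on $\pi_m$ for all $m$, to its value in $\mf B^*$ (which records its action on classes of all degrees, including negative), truncated at excess $\geq -n$ when we only remember its action on $\pi_{-n}$.

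The main obstacle — and the crux of the proof — is matching the two $\lim$ systems \emph{as algebras}, not just as graded abelian groups, i.e.\ checking that the ring structure on $\Pow(H)$ coming from composition of cospectra (Section~\ref{sec:stable-end}) agrees with the composition product on $\mf B^*$ after completion. On homotopy groups the composition pairing $\Pow(H) \otimes \Pow(H) \to \Pow(H)$ is the one that, level by level, composes a stable power operation acting on $\pi_{-m}$ with one acting on $\pi_{-n}$; this is exactly how Mandell's $\mf B^*$ multiplies, and the Adem relations in $\mf B^*$ are precisely the relations in $H_*\Free(\Sigma^{-n}S^0)$ from Theorem~\ref{thm:freehomology}. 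So the algebra identification reduces to: (i) the levelwise maps $\mf B^* \to \pi_*\Sigma^n\wt\Free_H(\Omega^n H)$ are surjective ring maps with kernel the ideal of "excess $< -n$" monomials — this is a reindexing of Theorem~\ref{thm:freehomology} plus the fact that the quotient $\mf B^*/(\text{excess} < -n)$ is precisely $\mb Q$ on a single class in degree $-n$, because the instability relations in $\mb Q$ are exactly the statement that excess-$(< -n)$ admissibles vanish; and (ii) these quotient maps are compatible with the tower bonding maps, which is the surjectivity-of-transitions computation from the first paragraph. Granting (i) and (ii), passing to the limit gives a ring isomorphism $\pi_*\Pow(H) = \lim_n \mf B^*/(\text{excess} < -n) = (\mf B^*)^\wedge$, which is the assertion. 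The one delicate point to verify inside (i) is that at odd primes the classes $\beta P^s$, which are not composites, are correctly accounted for — but this is handled uniformly since Theorem~\ref{thm:freehomology} already includes the $\beta^\epsilon P^{s_i}$ monomials in its basis of $\mb Q(V)$, and Mandell's $\mf B^*$ has the matching generators and relations by construction.
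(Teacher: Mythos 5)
Your proposal follows essentially the same route as the paper's proof: identify each stage of the tower via Theorem~\ref{thm:freehomology}, observe that the suspension maps kill products and preserve the operations $Q^s$ (resp.\ $\beta^\epsilon P^s$) so the tower reduces to the system of excess-truncated admissible monomials with surjective transition maps (hence no $\lim^1$), and identify the inverse limit with the completion at the excess filtration; your extra care about matching the ring structures is a welcome addition rather than a different method, since the paper's proof only records the additive identification. One internal inconsistency worth fixing: your parenthetical claiming that an admissible monomial of excess exactly $-n$ on $e_{-n}$ ``has no preimage'' contradicts your own (correct) assertion that the transition maps are surjective --- such a monomial lifts to the same monomial on $e_{-(n+1)}$, whose excess $-n \geq -(n+1)$ is still admissible at that level; the kernel of each transition map consists instead of the monomials whose excess drops strictly below $-n$.
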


\begin{proof}
  Theorem~\ref{thm:freehomology} allows us to identify the homotopy
  groups of the sequence of spectra
  \[
    \dots \to \Sigma^n \wt\Free(\Omega^n H) \to \Sigma^{n-1}
    \wt\Free(\Omega^{n-1} H) \to \dots
  \]
  as an inverse system
  \[
    \dots \to \Sigma^n \wt{\mb Q}(\Omega^n \mb F_p) \to \Sigma^{n-1}
    \wt{\mb Q}(\Omega^{n-1} \mb F_p) \to \dots
  \]
  obtained by removing the unit. In addition, the Dyer--Lashof
  operations are stable and the products are unstable: the maps
  in this directed system annihilate all products, while preserving
  the operations $Q^s$ or $\beta^\epsilon P^s$. Therefore, this
  inverse system is equivalent to the quotient inverse system
  \[
    \dots \to \{Q^I e_n \mid e(I) \geq -n\} \to \{Q^I e_{n-1} \mid e(I) \geq -n+1\}
    \to \dots
  \]
  at $p=2$, or the analogue at odd primes. The structure maps in this
  inverse system are surjective, and so there are no
  $\lim^1$-terms. The homotopy groups of $\Pow(H)$ are then the
  completion of the group with basis
  \[
    \{Q^I e_n \mid I\text{ admissible}\}
  \]
  with respect to the excess filtration.
\end{proof}

\begin{thm}
  There is a sequence of maps of algebras
  \[
    H \to \Pow(H) \to F(H,H)
  \]
  which, on taking homotopy groups, induces the composite
  \[
    \mb F_p \to (\mf B^*)^\wedge \to \mf A^*
  \]
  from the completed algebra of generalized Steenrod operations to the
  Steenrod algebra.
\end{thm}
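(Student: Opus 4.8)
The plan is to build the maps $H \to \Pow(H) \to F(H,H)$ as a special case of the ring-map diagram $E \to \Pow(E) \to \End(E)$ from Theorem~\ref{thm1.1}, and then identify the induced map on homotopy groups with a map that has already been computed one ring at a time. The ring $H = \pi_*H\mb F_p$-level statement is the easy end: the map $H \to \Pow(H)$ is the unit, which on homotopy groups sends $\mb F_p$ to the weight-one summand of $(\mf B^*)^\wedge$, realizing the inclusion of the ground ring. The content is in the second map $\Pow(H)\to\End(H) = F(H,H)$; the target $\pi_* F(H,H)$ is the mod-$p$ Steenrod algebra $\mf A^*$ (with appropriate grading conventions), so we must show the induced map $(\mf B^*)^\wedge \to \mf A^*$ is the classical quotient. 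By the previous theorem, $\pi_*\Pow(H)$ is the excess completion of the $\mb F_p$-algebra on admissible monomials $Q^I e_n$ (resp.\ $\beta^\epsilon P^I e_n$), so the task reduces to tracking where the admissible generators $Q^s$, $P^s$, $\beta P^s$ go.

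First I would make the map $\Pow(H)\to\End(H)$ concrete at the level of the representing cospectra. Under Proposition~\ref{prop:functoriality} applied to the adjunction $\Spa^\op \to \CAlg_E$ with left adjoint $\Map_{\CAlg_E}(-,E)$, the map $\End(Y)\to\End(gY)$ is induced by comparing the cospectrum $\{\Free_E(\Omega^n E)\}$ in $\CAlg_E^\op$ with its image, the spectrum $E = \{E_n\}$ in $\Spa^\op$. Concretely, on $H$-homology a map $\Sigma^\infty_+ X \to$ (a sphere) factors through the free commutative $H$-algebra, so the map on homotopy groups is the evident restriction: a generalized Steenrod operation, applied to the fundamental class and then read off in a single cohomological degree, becomes an honest stable cohomology operation. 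This is exactly the content of Mandell's comparison in \cite[\S5]{mandell-einftypadic}: the operations $Q^s$ (at $p=2$) and $\beta^\epsilon P^s$ (at odd $p$) acting on $\pi_*$ of commutative $H$-algebras restrict, on the algebra $(H\mb F_p)^X = H^X$, to the classical Steenrod operations $Sq^s$ (resp.\ $\beta^\epsilon P^s$) acting on $H^*(X;\mb F_p)$, up to the standard reindexing that turns lower-indexed Dyer--Lashof operations into upper-indexed Steenrod operations; and the products (which are unstable, as noted in the previous proof) die under this restriction, as does $P^0 - 1$.

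Second, having matched generators I would invoke the known presentation. Mandell identifies $\mf B^*$ as a ring of generalized Steenrod operations and shows the natural map $\mf B^* \to \mf A^*$ is surjective with kernel the two-sided ideal generated by $1 - P^0$ (equivalently, generated by the products together with the instability discrepancy); this is recalled in the introduction. Passing to the excess completion $(\mf B^*)^\wedge$ and noting that $\mf A^*$ is already complete in the relevant sense (the excess filtration on $\mf A^*$ is finite in each degree), the completed map $(\mf B^*)^\wedge \to \mf A^*$ is still the quotient by the (closure of the) same ideal. Combining with the first step, the spectrum-level map $\Pow(H)\to F(H,H)$ induces precisely this quotient, and precomposing with $H \to \Pow(H)$ gives $\mb F_p \to (\mf B^*)^\wedge \to \mf A^*$ as claimed.

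The main obstacle is the identification in the first step: verifying that the abstract map $\Pow(H)\to\End(H)$ produced by the categorical machinery agrees, on homotopy groups, with Mandell's concrete restriction homomorphism, and in particular that the grading and indexing conventions line up so that the weight-$p$ generator $Q^s$ (resp.\ $\beta^\epsilon P^s$) maps to $Sq^s$ (resp.\ $\beta^\epsilon P^s$) rather than to some reindexed or twisted variant. This requires unwinding the definition of $\End(E)$ as the endomorphism ring of $E^{(-)}\co\Spa^\op\to\Sp$, matching its homotopy with $\mf A^*$ via the representability of stable operations by maps of Eilenberg--Mac Lane spectra, and checking compatibility with the weight decomposition of \S\ref{sec:weight}: the weight-$p$ summand $\Op^H\langle p\rangle$ computed in the example of \S\ref{sec:p-power} must restrict to the subalgebra of $\mf A^*$ generated by a single $Sq^s$ (resp.\ $P^s$, $\beta P^s$), and this is where the Tate-spectral-sequence picture of $\Op^H\langle p\rangle$ and the classical construction of $Sq^s$ via the extended power $(E\Sigma_p)_+\wedge_{\Sigma_p}(-)^{\wedge p}$ must be seen to coincide. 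Once that bookkeeping is done, the rest is formal.
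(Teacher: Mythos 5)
Your proposal is correct and takes essentially the same route as the paper: reduce to the generators $Q^s$ (resp.\ $\beta^\epsilon P^s$) of $(\mf B^*)^\wedge$ and observe that on the cohomology of spaces they restrict to the classical Steenrod operations, which is their defining property. The paper's proof is just a terser version of this (recording the identity $Q^s(x) = \Sq^{-s}(x)$, so note the index reversal you flag as a concern is indeed present), and the extra bookkeeping you describe about the ideal $(1-P^0)$ and completion is consistent elaboration rather than a different argument.
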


\begin{proof}
  Since the algebra $(\mf B^*)^\wedge$ is generated by the operations
  $Q^s$ at $p=2$, or $\beta^\epsilon P^s$ at odd primes, this follows
  from the fact that in the cohomology of spaces we have the
  identities
  \[
    Q^s(x) = \Sq^{-s}(x)
  \]
  at $p=2$ and
  \[
    \beta^\epsilon P^s(x) = \beta^\epsilon \mathrm{P}^{-s}(x)
  \]
  at $p > 2$. This is the defining property of Steenrod's reduced
  power operations \cite{steenrod-epstein}.
\end{proof}

\appendix
\section{Equivariant results}
\label{sec:equivariant-results}

In the following sections, we will need to assemble a number of
results from equivariant stable homotopy theory. Our goal is to
analyze Tate spectra constructed using families of subgroups and a
version using their opposites.

\subsection{Tate and opposite Tate spectra for families}
\begin{defn}
  \label{def:tateandopposite}
  Suppose that $G$ is a group with a family $\FF$ of subgroups. For
  any $G$-spectrum $Z$, we define the \emph{$\FF$-Tate spectrum} to be
  \[
    Z^{t\FF} =\wt{E\FF} \otimes F(EG_+, Z),
  \]
  and the \emph{opposite $\FF$-Tate spectrum} to be
  \[
    Z^{t^\op\FF} = F(\wt{E\FF}, EG_+ \otimes Z).
  \]
  If $\FF$ consists entirely of the trivial subgroup, we simply write
  $Z^{tG}$ and $Z^{t^\op G}$.
\end{defn}

\begin{rmk}
  For any subgroup $H$ of $G$, the $H$-fixed points might also be
  referred to as the $\FF$-Tate spectrum.
\end{rmk}

\begin{rmk}
  Note that the Tate and opposite Tate spectra depend only on the
  underlying spectrum $Z$ with $G$-action, and not on any
  genuine-equivariant structure.
\end{rmk}

The following result of Greenlees is referred to as Warwick duality,
and it extends \cite[16.1]{greenlees-may-tate}.

\begin{thm}[{\cite[2.5, 4.1]{greenlees-axiomatictate}}]
  \label{thm:warwickduality}
  For any group $G$ with a family $\FF$ of subgroups, and any
  $G$-spectrum $Z$, there is a natural equivalence
  \[
    \wt{E\FF} \otimes F(E\FF_+, Z) \simeq \Sigma F(\wt{E\FF}, E\FF_+
    \otimes Z).
  \]
\end{thm}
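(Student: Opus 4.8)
The plan is to exhibit a single natural map
\[
  \lambda\co E\FF_+\otimes Z \longrightarrow F(E\FF_+,Z),
\]
namely the composite $E\FF_+\otimes Z\to Z\to F(E\FF_+,Z)$ of the map induced by the augmentation $E\FF_+\to S^0$ with the adjunction unit, and to check that its cofiber is $\wt{E\FF}\otimes F(E\FF_+,Z)$ while its fiber is $F(\wt{E\FF},E\FF_+\otimes Z)$. Since $\mathrm{cofib}(\lambda)\simeq\Sigma\,\mathrm{fib}(\lambda)$ in the stable $\infty$-category of $G$-spectra, this yields the asserted equivalence, and it is natural in $Z$ because $\lambda$ is.

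The only inputs are the defining cofiber sequence $E\FF_+\to S^0\to\wt{E\FF}$ together with its two formal consequences $\wt{E\FF}\otimes E\FF_+\simeq *$ and $\wt{E\FF}\otimes\wt{E\FF}\simeq\wt{E\FF}$, both flowing from $E\FF\times E\FF\simeq E\FF$. Call a $G$-spectrum $W$ \emph{local} if $E\FF_+\otimes W\simeq *$; equivalently $W$ is a $\wt{E\FF}$-module, equivalently its restriction to every $H\in\FF$ is contractible. I use two standard facts about this smashing-type localization: (i) $F(\wt{E\FF},W)$ is always local, because $F(\wt{E\FF},W)\simeq F(\wt{E\FF}\otimes\wt{E\FF},W)=F\bigl(\wt{E\FF},F(\wt{E\FF},W)\bigr)$; and (ii) if $W$ is local and $B$ is $\wt{E\FF}$-acyclic (i.e. $\wt{E\FF}\otimes B\simeq *$) then $F(B,W)\simeq *$, and in particular $F(E\FF_+\otimes X,W)\simeq *$ for every $X$.

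To compute $\mathrm{cofib}(\lambda)$, factor $\lambda$, by naturality of $E\FF_+\otimes(-)\to(-)$, as $E\FF_+\otimes Z\xrightarrow{E\FF_+\otimes b}E\FF_+\otimes F(E\FF_+,Z)\to F(E\FF_+,Z)$; the first leg is an equivalence because its cofiber is $E\FF_+\otimes\Sigma F(\wt{E\FF},Z)$, which vanishes by (i), so $\mathrm{cofib}(\lambda)$ is the cofiber of the second leg, which is $\wt{E\FF}\otimes F(E\FF_+,Z)$ (smash $E\FF_+\to S^0\to\wt{E\FF}$ with $F(E\FF_+,Z)$). Dually, to compute $\mathrm{fib}(\lambda)$, factor $\lambda$, by naturality of $(-)\to F(E\FF_+,-)$, as $E\FF_+\otimes Z\to F(E\FF_+,E\FF_+\otimes Z)\xrightarrow{F(E\FF_+,a)}F(E\FF_+,Z)$; the second leg is an equivalence because its fiber is $F(E\FF_+,\Sigma^{-1}\wt{E\FF}\otimes Z)$, which vanishes by (ii), so $\mathrm{fib}(\lambda)$ is the fiber of the first leg, which is $F(\wt{E\FF},E\FF_+\otimes Z)$ (apply $F(-,E\FF_+\otimes Z)$ to $E\FF_+\to S^0\to\wt{E\FF}$). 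Combining,
\[
  \wt{E\FF}\otimes F(E\FF_+,Z)\ \simeq\ \mathrm{cofib}(\lambda)\ \simeq\ \Sigma\,\mathrm{fib}(\lambda)\ \simeq\ \Sigma F(\wt{E\FF},E\FF_+\otimes Z).
\]

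The part I expect to require the most care is purely bookkeeping: one must verify that the two factorizations above genuinely factor the \emph{same} arrow $\lambda$ — i.e. that the first leg of each is the canonical map claimed — so that it is legitimate to read off $\mathrm{cofib}(\lambda)$ from one factorization and $\mathrm{fib}(\lambda)$ from the other. This is a routine chase through the tensor-hom adjunction and the triangle identities for the idempotent $\wt{E\FF}$, but it is where a mismatched unit or sign could slip in; it can be pinned down by noting that $\lambda$ is characterized as the unique map compatible with the augmentation $E\FF_+\to S^0$, the completion unit $Z\to F(E\FF_+,Z)$, and $\otimes$-naturality. The facts (i) and (ii) and the characterizations of local objects are otherwise formal, flowing from $\wt{E\FF}$ being a smashing-type idempotent whose acyclics are generated by the cells $G/H_+$ for $H\in\FF$.
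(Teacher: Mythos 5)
Your argument is correct. Note that the paper does not prove this statement at all: it is imported verbatim from Greenlees's axiomatic treatment of Tate cohomology (the cited [2.5, 4.1]), so there is no in-paper proof to compare against. What you have written is essentially the standard Greenlees-style derivation: your $\lambda$ is the composite around the Tate square, and identifying $\mathrm{cofib}(\lambda)$ with $\wt{E\FF}\otimes F(E\FF_+,Z)$ and $\mathrm{fib}(\lambda)$ with $F(\wt{E\FF},E\FF_+\otimes Z)$, then invoking $\mathrm{cofib}\simeq\Sigma\,\mathrm{fib}$ in a stable category, is exactly how Warwick duality is usually established. The two naturality squares you flag as the delicate point do commute and do factor the same map $\lambda=b\circ a$ (one is naturality of $E\FF_+\otimes(-)\Rightarrow\mathrm{id}$ applied to $b$, the other naturality of $\mathrm{id}\Rightarrow F(E\FF_+,-)$ applied to $a$), so the bookkeeping is fine. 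One small imprecision: the justification you give for (i), namely the idempotence identity $F(\wt{E\FF},W)\simeq F\bigl(\wt{E\FF},F(\wt{E\FF},W)\bigr)$, shows that $F(\wt{E\FF},W)$ is $\wt{E\FF}$-complete rather than directly that it is local; the clean argument for locality is the other characterization you list, since $\Res^G_H F(\wt{E\FF},W)\simeq F(\Res^G_H\wt{E\FF},\Res^G_H W)\simeq *$ for $H\in\FF$ because $\Res^G_H\wt{E\FF}\simeq *$. With that (and the analogous cell-by-cell argument over $G/H_+$ for $H\in\FF$ underlying (ii)), your proof is complete and self-contained.
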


\begin{cor}
  We have a natural equivalence $Z^{tG} \simeq \Sigma Z^{t^\op G}$ for any
  group $G$ and any $G$-spectrum $Z$.
\end{cor}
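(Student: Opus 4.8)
The plan is to deduce this immediately from Warwick duality (Theorem~\ref{thm:warwickduality}) by specializing the family $\FF$ to the family $\{e\}$ consisting only of the trivial subgroup of $G$. First I would recall that for this family the classifying space $E\FF$ is characterized by the property that $(E\FF)^H$ is contractible when $H$ is trivial and empty otherwise; this is precisely the defining property of $EG$, so there is a $G$-equivariant equivalence $E\FF \simeq EG$, and consequently $\wt{E\FF} \simeq \wt{EG}$ as the cofibers of $E\FF_+ \to S^0$ and $EG_+ \to S^0$ respectively.

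Substituting $\FF = \{e\}$ into the equivalence of Theorem~\ref{thm:warwickduality} then yields
\[
  \wt{EG} \otimes F(EG_+, Z) \simeq \Sigma F(\wt{EG}, EG_+ \otimes Z).
\]
By Definition~\ref{def:tateandopposite}, the left-hand side is exactly $Z^{tG}$ and the right-hand side is $\Sigma Z^{t^\op G}$, which is the claimed equivalence. Naturality in the $G$-spectrum $Z$ is inherited directly from the naturality already asserted in Theorem~\ref{thm:warwickduality}, since all the constructions involved ($EG_+ \otimes (-)$, $F(EG_+, -)$, $\wt{EG} \otimes (-)$, $F(\wt{EG}, -)$, and $\Sigma$) are functorial in $Z$.

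I do not expect any genuine obstacle here: the statement is a direct specialization and the proof is essentially one line once Warwick duality is in hand. The only point that merits a sentence is the identification $E\{e\} \simeq EG$ together with the fact that forming reduced suspensions, smash products, and function spectra respects this equivalence — all standard in equivariant stable homotopy theory — after which the two sides literally match the definitions of $Z^{tG}$ and $Z^{t^\op G}$.
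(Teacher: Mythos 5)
Your proof is correct and is exactly the intended argument: the corollary is the specialization of Theorem~\ref{thm:warwickduality} to the family $\FF = \{e\}$, using $E\{e\} \simeq EG$ so that the two sides of Warwick duality literally become the definitions of $Z^{tG}$ and $\Sigma Z^{t^\op G}$. The paper gives no separate proof, treating this as immediate, and your identification of $E\FF_+$ with $EG_+$ for this family is the only point that needed saying.
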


\subsection{Transfer splittings}

In some cases, we can identify a Tate spectrum as a split summand of a
Tate spectrum occuring for a smaller subgroup. In this section we will
identify some conditions under which this holds.

\begin{lem}
  \label{lem:transferrestrict}
  Suppose that $Z$ is a $G$-spectrum, and that $n = [G:H]$ acts
  invertibly on $Z$. Then the composite $\Tr^G_H \Res^G_H$ acts on
  $\pi_*^G (Z^{t\FF})$ and $\pi_*^G(Z^{t^\op\FF})$ by an isomorphism.
\end{lem}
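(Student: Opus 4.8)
The plan is to reduce the claim to a standard transfer argument in equivariant stable homotopy theory, using the fact that a group acting invertibly lets the ratio of transfer and restriction be controlled. First I would observe that both $Z^{t\FF}$ and $Z^{t^\op\FF}$ are built functorially from $Z$ using only the underlying spectrum with $G$-action (as remarked above), so the transfer and restriction maps $\Tr^G_H$ and $\Res^G_H$ act on them compatibly; in particular it suffices to prove the statement for one of the two, say $Z^{t\FF} = \wt{E\FF}\otimes F(EG_+, Z)$, and then the other follows by the same reasoning (or by Warwick duality, Theorem~\ref{thm:warwickduality}, which is $G$-equivariant up to the stated suspension and so intertwines the two transfer actions).

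Next I would recall the double-coset formula for $\Res^G_H \Tr^G_H$ and the more elementary fact we actually need here: for \emph{any} $G$-spectrum $W$, the composite $\Tr^G_H \Res^G_H \co \pi_*^G W \to \pi_*^G W$ is given by multiplication by the element $[G/H]\in \pi_0^G(\mathbb{S})$ in the Burnside ring, i.e. it is an instance of the Euler characteristic/transfer relation. Applying this with $W = Z^{t\FF}$, the composite acts on $\pi_*^G(Z^{t\FF})$ by multiplication by $[G/H]$. So the whole content is that $[G/H]$ acts invertibly on $\pi_*^G(Z^{t\FF})$ (and similarly on $\pi_*^G(Z^{t^\op\FF})$) once $n=[G:H]$ acts invertibly on $Z$.

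For that, I would use that $F(EG_+, Z)$, $\wt{E\FF}$, and the smash product are all built from $Z$ by homotopy (co)limits and smashing with spaces, each of which is a $\mathbb{Z}$-linear — indeed $\pi_0^G(\mathbb{S})$-linear — operation on the homotopy category of $G$-spectra, and the element $[G/H]$ of the Burnside ring acts on $\pi_*^G(\mathbb{S})$-modules through the augmentation/character it defines. The key input is that the $\mathbb{S}[1/n]$-module structure on $Z$ (from $n$ acting invertibly) propagates: $F(EG_+, Z)$, being an $F(EG_+,\mathbb{S})$-module and in particular receiving the scalar action, is an $\mathbb{S}[1/n]$-module spectrum, hence so is $Z^{t\FF}$ after smashing with $\wt{E\FF}$. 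It then remains to check that under the resulting $\mathbb{Z}[1/n]$-algebra structure on $\pi_0^G(Z^{t\FF})$, the image of $[G/H]$ is a unit. Here one uses that the Burnside-ring element $[G/H]$ maps, under the "number of $H'$-fixed points" ring homomorphisms $\pi_0^G(\mathbb{S})\to\mathbb{Z}$, to nonnegative integers dividing powers of $n$ — in particular it becomes a unit in $\mathbb{Z}[1/n]$ — so its action on any $\mathbb{Z}[1/n]$-module is invertible; this is the standard argument that $[G/H]$ becomes invertible in the Burnside ring after inverting $[G:H]$.

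The main obstacle I expect is the bookkeeping around where exactly the $\mathbb{S}[1/n]$-module structure lives and making sure the transfer–restriction composite really is multiplication by $[G/H]$ in this context: one has to be a little careful that $\Tr^G_H$ and $\Res^G_H$ are taken for the \emph{underlying} $G$-spectrum (no genuine structure on $Z$ is assumed), and that $Z^{t\FF}$, $Z^{t^\op\FF}$ are formed with $F(EG_+,-)$ rather than a genuine geometric fixed-point or norm construction — so the relevant transfer is the ordinary stable transfer and the identity $\Tr^G_H\Res^G_H = \cdot\,[G/H]$ is the clean one. Modulo that care, the argument is the standard "invert the index, the Burnside element becomes a unit" trick, so no deep new ideas should be needed; the only real work is assembling the functoriality of $(-)^{t\FF}$ and $(-)^{t^\op\FF}$ with transfers, which the earlier remarks have already set up.
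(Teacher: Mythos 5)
Your setup is right and matches the paper's: the composite $\Tr^G_H\Res^G_H$ on $\pi_*^G$ of any $G$-spectrum is multiplication by the Euler class $[G/H]\in A(G)=\pi_0^G(\mb S)$, and the $\mb S[1/n]$-module structure on $Z$ propagates to $Z^{t\FF}$ and $Z^{t^\op\FF}$. The gap is in the last step, where you claim that $[G/H]$ becomes a unit in $A(G)[1/n]$ because its marks are ``nonnegative integers dividing powers of $n$.'' This is false: the mark of $[G/H]$ at a subgroup $H'$ is $|(G/H)^{H'}|$, which vanishes whenever $H'$ is not subconjugate to $H$ (e.g.\ the mark at $H'=G$ is $0$ when $H\neq G$). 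Concretely, for $G=C_2$ and $H=e$, the element $[C_2/e]$ has marks $(2,0)$ and is a zero divisor, not a unit, in $A(C_2)[1/2]\cong\mb Z[1/2]\times\mb Z[1/2]$. So ``its action on any $\mb Z[1/n]$-module is invertible'' is not available, and the argument as written does not close.

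What is actually needed --- and what the paper's proof supplies --- is that the action of $A(G)$ on $\pi_*^G(Z^{t\FF})$ does not merely factor through $A(G)[1/n]$ but through $\pi_0^G\End(EG_+[1/n])\cong\pi_0((\mb S[1/n])^{hG})$, because of the $F(EG_+,-)$ (respectively $EG_+\otimes-$) appearing in the definition of the Tate (respectively opposite Tate) construction. This ring is complete with respect to the topology defined by the augmentation ideal $I=\ker(A(G)\to\mb Z)$, and $[G/H]\equiv n \pmod I$; since $n$ is inverted, $[G/H]$ is a unit modulo $I$ and hence, by completeness, a unit. In other words, Borel completion is exactly what kills the obstruction coming from the vanishing marks at nontrivial subgroups; without invoking that completeness (or some equivalent statement that only the mark at the trivial subgroup matters on Borel-complete spectra), the final step of your argument fails.
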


\begin{proof}
  Consider the map $\mb S \to \End(EG_+[1/n])$ of $G$-equivariant ring
  spectra. Applying $\pi_0^G$ gives the map
  $A(G) \to \pi_0((\mb S[1/n])^{hG})$, where the former is the
  Burnside ring and the latter is complete in the topology defined by
  the augmentation $I$ of the quotient $A(G) \to \mb Z$. The element
  $[G/H] \in \pi_0^G(S^0)$ maps in $A(G)/I$ to the unit
  $n \in \mb Z[1/n]$. Since $[G/H]$ reduces to a unit mod the
  augmentation ideal, and the ring is complete, $[G/H]$ is a unit in
  $\End(EG_+[1/n]).$
  
  The term $EG_+$ in the formula for Tate spectra implies that map
  $\mb S \to \End(Z^{t\FF})$ of equivariant ring spectra factors
  through the endomorphism ring $\End(EG_+[1/n])^\op$, and similarly
  $\mb S \to \End(Z^{t^\op\FF})$ factors through $\End(EG_+[1/n])$. As
  a result, in both of these rings $[G/H] \in A(G)$ maps to a
  unit. However, this element represents the endomorphism
  $\Tr^G_H \Res^G_H$.
\end{proof}

\begin{lem}
  \label{lem:weylinvariants}
  Suppose that $Z$ is a $G$-spectrum and that $H$ is a subgroup such
  that, for all $x$ not in the normalizer $N_G(H)$, the group $H \cap
  {}^x H$ is in $\FF$. Then the composite $\Res^G_H \Tr^G_H$ acts on
  $\pi_*^H(Z^{t\FF})$ and $\pi_*^H(Z^{t^\op\FF})$ by
  \[
    \alpha \mapsto N(\alpha) = \sum_{w \in W_G(H)} w \cdot \alpha.
  \]
\end{lem}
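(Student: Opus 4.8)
The plan is to use the double-coset formula for the composite $\Res^G_H \Tr^G_H$ and then exploit that the terms indexed by cosets outside $N_G(H)$ vanish because of the family condition. Recall that the double-coset formula expresses $\Res^G_H \Tr^G_H$ as a sum over $H$-$H$ double cosets $HxH$ in $G$ of the composite $\Tr^H_{H \cap {}^xH} \circ c_x \circ \Res^H_{{}^{x^{-1}}H \cap H}$, where $c_x$ denotes the conjugation by $x$. First I would fix a set of representatives $x$ for these double cosets, splitting them into those with $x \in N_G(H)$ (for which $H \cap {}^xH = H$ and the corresponding summand is simply the automorphism $c_x$ of $\pi_*^H(Z^{t\FF})$, namely the action of the class $w = [x] \in W_G(H)$) and those with $x \notin N_G(H)$.

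Next I would argue that the summands with $x \notin N_G(H)$ act trivially. By hypothesis, for such $x$ the subgroup $H \cap {}^xH$ lies in $\FF$. The key point is that for any subgroup $K \in \FF$, the restriction map $\pi_*^H(Z^{t\FF}) \to \pi_*^K(Z^{t\FF})$ is zero, and likewise for $Z^{t^\op\FF}$: this is immediate from the definitions in Definition~\ref{def:tateandopposite}, since $\wt{E\FF}$ becomes equivariantly contractible after restriction to any subgroup in $\FF$, so $\wt{E\FF}\otimes F(EG_+,Z)$ and $F(\wt{E\FF}, EG_+ \otimes Z)$ both restrict to a contractible $K$-spectrum. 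Hence each composite $\Tr^H_{H\cap {}^xH} \circ c_x \circ \Res^H_{{}^{x^{-1}}H\cap H}$ factors through $\pi_*^{{}^{x^{-1}}H \cap H}(Z^{t\FF}) = 0$ (note ${}^{x^{-1}}H \cap H$ is conjugate to $H \cap {}^xH$, hence also in $\FF$), so that summand contributes nothing.

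Finally, I would assemble the surviving terms: the double cosets contained in $N_G(H)$ are exactly the cosets of $H$ in $N_G(H)$, i.e.\ the elements $w$ of the Weyl group $W_G(H) = N_G(H)/H$, and each contributes the operator $\alpha \mapsto w\cdot\alpha$. Summing gives $\Res^G_H \Tr^G_H(\alpha) = \sum_{w \in W_G(H)} w\cdot\alpha = N(\alpha)$, as claimed; the argument for $Z^{t^\op\FF}$ is identical since the vanishing of restriction to subgroups in $\FF$ holds there as well.

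The main obstacle is making the double-coset (Mackey) formula rigorous in the $\infty$-categorical setting of genuine $G$-spectra and checking that the indexing of double cosets inside $N_G(H)$ by the Weyl group is correctly normalized, including the bookkeeping that $H \cap {}^xH \in \FF$ forces the conjugate subgroup ${}^{x^{-1}}H \cap H$ into $\FF$ as well; once the double-coset formula is in hand, the vanishing argument is a formal consequence of the definition of $\wt{E\FF}$.
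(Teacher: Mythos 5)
Your proposal is correct and follows essentially the same route as the paper: apply the double coset formula to $\Res^G_H\Tr^G_H$, observe that the terms indexed by double cosets outside $N_G(H)$ factor through fixed points for subgroups in $\FF$, which vanish because $Z^{t\FF}$ and $Z^{t^\op\FF}$ are modules over $\wt{E\FF}$ (equivalently, $\wt{E\FF}$ restricts to a contractible $K$-spectrum for $K\in\FF$), and identify the surviving double cosets with elements of $W_G(H)$. The only cosmetic difference is that the paper phrases the vanishing via the smashing-localization characterization of $\wt{E\FF}$-modules rather than via contractibility of the restricted $\wt{E\FF}$, which amounts to the same thing.
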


\begin{proof}
  The objects $Z^{t\FF}$ and $Z^{t^\op\FF}$ are modules over the
  smashing localization $\wt{E\FF}$, and such modules are
  characterized by the property that their $K$-fixed points are
  trivial for all $K \in \FF$. Therefore,
  $\pi_0^K(Z^{t\FF}) = \pi_0^K(Z^{t^\op\FF}) = 0$ for all $K \in \FF$.

  The double coset formula says that
  \[
    \Res^G_H \Tr^G_H\alpha = \sum_{[x] \in H\setminus G / H}
    \Tr_{H \cap {}^x H}^H (x \cdot \Res^H_{{}^{x^{-1}} H \cap H}\alpha).
  \]
  By assumption, the groups $H \cap {}^xH$ are in $\FF$ for $x$ not in
  the normalizer of $H$, and so those terms in the sum can be
  eliminated. We get a reduced formula
  \[
    \Res^G_H \Tr^G_H \alpha = \sum_{x \in W_G(H)} x \cdot \alpha
  \]
  as desired.
\end{proof}

\begin{prop}
  Suppose that $G$ is a group with a family $\mathcal{F}$ of
  subgroups, that $H$ is a subgroup such that $H \cap {}^x H$ is in
  $\FF$ for all $x$ not in the normalizer of $H$, and that $Z$ is a
  $G$-spectrum that is acted on invertibly by $[G:H]$. Then the
  restriction from $G$ to $H$ induces an isomorphism
  \[
    \pi_*^G(Z^{t\FF}) \to \left[\pi_*^H(Z^{t\FF})\right]^{W_G(H)}
  \]
  and similarly for opposite Tate spectra.
\end{prop}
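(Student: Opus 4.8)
The plan is to run a formal transfer argument built on the two preceding lemmas; the substance is in Lemmas~\ref{lem:transferrestrict} and~\ref{lem:weylinvariants}, and what remains is bookkeeping. Write $A = \pi_*^G(Z^{t\FF})$ and $B = \pi_*^H(Z^{t\FF})$, and let $r = \Res^G_H\co A \to B$ and $t = \Tr^G_H\co B \to A$; these are defined because $Z^{t\FF}$ is a genuine $G$-spectrum. Since $Z^{t\FF}$ carries a $G$-action, the normalizer $N_G(H)$ acts on $B$, and this action factors through $W_G(H) = N_G(H)/H$; the standard compatibility of restriction with conjugation shows that $r$ takes values in $B^{W_G(H)}$. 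So the first step is to observe that $r$ refines to a homomorphism $A \to B^{W_G(H)}$, and the claim is that this refined map is an isomorphism.

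Injectivity is immediate from Lemma~\ref{lem:transferrestrict}: the hypothesis that $[G:H]$ acts invertibly on $Z$ (hence on $Z^{t\FF}$) makes $tr = \Tr^G_H\Res^G_H$ an automorphism of $A$, so $r$ is split injective, in particular injective into $B^{W_G(H)}$.

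For surjectivity I would first record two elementary facts. First, $|W_G(H)| = [N_G(H):H]$ divides $[G:H]$, because $[G:H] = [G:N_G(H)]\cdot[N_G(H):H]$; hence $|W_G(H)|$ acts invertibly on $Z$, on $A$, and on $B$, and multiplication by $|W_G(H)|$ is $W_G(H)$-equivariant so its inverse preserves $B^{W_G(H)}$. Second, the hypothesis $H \cap {}^xH \in \FF$ for $x \notin N_G(H)$ is precisely what Lemma~\ref{lem:weylinvariants} requires, and it gives $rt(\beta) = \Res^G_H\Tr^G_H(\beta) = \sum_{w \in W_G(H)} w\cdot\beta$ for every $\beta \in B$. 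Now given $\beta \in B^{W_G(H)}$, the right-hand side collapses to $|W_G(H)|\cdot\beta$, so $\beta = r\bigl(|W_G(H)|^{-1}\,t(\beta)\bigr)$ lies in the image of $r$. Together with injectivity this shows $r\co A \to B^{W_G(H)}$ is an isomorphism.

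The opposite Tate spectrum case is identical: both Lemma~\ref{lem:transferrestrict} and Lemma~\ref{lem:weylinvariants} are stated for $Z^{t^\op\FF}$ too, and nothing else in the argument sees the difference. I do not expect a serious obstacle --- the two lemmas do the real work, and the only points needing a moment's thought are the two ``gluing'' facts just used: that restriction lands in the Weyl invariants, and that $|W_G(H)|$ divides $[G:H]$ so the norm operator is invertible on $B^{W_G(H)}$.
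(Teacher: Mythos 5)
Your proof is correct and follows essentially the same route as the paper's: both use Lemma~\ref{lem:transferrestrict} to make $\Tr^G_H\Res^G_H$ invertible on $\pi_*^G$ (giving injectivity of the restriction) and Lemma~\ref{lem:weylinvariants} to identify $\Res^G_H\Tr^G_H$ on Weyl-invariants with multiplication by $|W_G(H)|$, a divisor of $[G:H]$ and hence invertible (giving surjectivity). The paper packages this as a triple composition $\Res$, $\Tr$, $\Res$ whose two double composites are isomorphisms, but the content is identical to your argument.
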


\begin{proof}
  The restriction naturally maps into the $W_G(H)$-fixed
  points. Consider the triple composition
  \[
    \pi_*^G(Z^{t\FF}) \too{\Res^G_H}
    \left[\pi_*^H(Z^{t\FF})\right]^{W_G(H)} \too{\Tr_H^G}
    \pi_*^G(Z^{t\FF}) \too{\Res^G_H}
    \left[\pi_*^H(Z^{t\FF})\right]^{W_G(H)}
  \]
  By Lemma~\ref{lem:transferrestrict}, the double composite
  $\pi_*^G(Z^{t\FF}) \to \pi_*^G(Z^{t\FF})$ is an isomorphism. By
  Lemma~\ref{lem:weylinvariants}, the double composite
  $\pi_*^H(Z^{t\FF})\to \pi_*^H(Z^{t\FF})$ is the norm: however, on
  $W_G(H)$-fixed elements it is multiplication by $|W_G(H)|$, a
  divisor of $[G:H]$, and hence is an isomorphism. Therefore, both the
  restriction and transfer maps are isomorphisms. The same proof holds
  for opposite Tate spectra.
\end{proof}

\begin{cor}
  Suppose that $G$ is a group with an inclusion $\FF \subset \FF'$ of
  families of subgroups and that $H$ is a subgroup such that the
  restrictions $\FF \cap H$ and $\FF' \cap H$ of these families to $H$
  are equal. Assume that $[G:H]$ acts invertibly on a $G$-spectrum $Z$
  and that $H \cap {}^x H$ is in $\FF$ for all $x$ not in the
  normalizer of $H$. Then the induced maps
  \[
    Z^{t\FF} \to Z^{t\FF'}
  \]
  and
  \[
    Z^{t^\op\FF'} \to Z^{t^\op\FF}
  \]
  are equivalences.
\end{cor}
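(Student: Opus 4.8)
The plan is to deduce this corollary from the preceding proposition applied to both families $\FF$ and $\FF'$ simultaneously. First I would observe that the hypotheses on $H$ are precisely those needed to invoke the proposition for each of $\FF$ and $\FF'$: we are told $[G:H]$ acts invertibly on $Z$, and that $H \cap {}^x H \in \FF$ for all $x \notin N_G(H)$; since $\FF \subset \FF'$ the latter condition also gives $H \cap {}^x H \in \FF'$. Thus the restriction maps
\[
  \pi_*^G(Z^{t\FF}) \to \left[\pi_*^H(Z^{t\FF})\right]^{W_G(H)}
  \quad\text{and}\quad
  \pi_*^G(Z^{t\FF'}) \to \left[\pi_*^H(Z^{t\FF'})\right]^{W_G(H)}
\]
are both isomorphisms, and similarly for the opposite Tate spectra.

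Next I would fit these into a commutative square with the map induced by $\FF \subset \FF'$. The naturality of restriction gives a diagram
\[
  \xymatrix{
    \pi_*^G(Z^{t\FF}) \ar[r] \ar[d]_{\Res} & \pi_*^G(Z^{t\FF'}) \ar[d]^{\Res} \\
    \left[\pi_*^H(Z^{t\FF})\right]^{W_G(H)} \ar[r] & \left[\pi_*^H(Z^{t\FF'})\right]^{W_G(H)}
  }
\]
in which the vertical maps are isomorphisms by the proposition. So to show the top map is an isomorphism it suffices to show the bottom map — i.e. the map $\pi_*^H(Z^{t\FF}) \to \pi_*^H(Z^{t\FF'})$ after passing to $W_G(H)$-invariants — is an isomorphism. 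But the $\FF$-Tate and $\FF'$-Tate spectra, viewed as objects of $H$-spectra, only depend on the restrictions $\FF \cap H$ and $\FF' \cap H$ of the families to $H$ (the classifying spaces $\wt{E\FF}$ and $E\FF_+$ have the property that their underlying $H$-spectra are $\wt{E(\FF\cap H)}$ and $E(\FF\cap H)_+$), and by hypothesis these restricted families are equal. Hence $Z^{t\FF}$ and $Z^{t\FF'}$ have equivalent underlying $H$-spectra, via a map compatible with the one induced by $\FF \subset \FF'$; in particular the bottom horizontal map is an isomorphism, so the top one is too. Since this holds on all equivariant homotopy groups $\pi_*^G$, the map $Z^{t\FF} \to Z^{t\FF'}$ is an equivalence of $G$-spectra. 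The argument for $Z^{t^\op\FF'} \to Z^{t^\op\FF}$ is identical, using the opposite-Tate half of the proposition.

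The main obstacle I anticipate is being careful about the claim that the $\FF$-Tate spectrum restricted to $H$ depends only on $\FF \cap H$, together with the compatibility of the comparison map for the two families with this identification. This is where one must unwind Definition~\ref{def:tateandopposite}: the point is that $\Res^G_H \wt{E\FF} \simeq \wt{E(\FF\cap H)}$ and $\Res^G_H(E\FF_+) \simeq \Res^G_H(EG_+) \simeq EH_+ \simeq E(\FF\cap H)_+$ when $\FF\cap H$ is the trivial family, or more generally that $\Res^G_H$ carries the universal $\FF$-space to the universal $(\FF\cap H)$-space; and the natural transformation of universal spaces induced by $\FF\subset\FF'$ restricts to the one induced by $\FF\cap H \subset \FF'\cap H$, which is an equivalence since those families agree. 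Everything else is formal naturality of restriction and invocation of the proposition.
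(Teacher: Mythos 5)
Your argument is correct and is exactly the deduction the paper intends: the corollary carries no written proof there, being an immediate consequence of the preceding proposition applied to both $\FF$ and $\FF'$, combined with the observation that $\Res^G_H$ carries the universal spaces for $\FF$ and $\FF'$ to those for the (equal) restricted families, so the comparison map is an equivalence of underlying $H$-spectra. The one caveat is your closing sentence: an isomorphism on $\pi_*^G$ alone does not yield an equivalence of genuine $G$-spectra, and the map generally is not one (restrict to a subgroup $K$ with $\FF\cap K \neq \FF'\cap K$); the corollary should be read, per the paper's earlier remark, at the level of fixed-point spectra, which is precisely what your argument establishes and all that is used afterward.
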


Combining this with the Warwick duality of
Theorem~\ref{thm:warwickduality}, we obtain the following result.
\begin{prop}
  \label{prop:tateandopposite}
  Suppose that $G$ is a group with a family $\FF$ of subgroups and
  that $H$ is a subgroup such that the restriction $\FF \cap H$
  contains only the trivial group. Assume that $[G:H]$ acts invertibly
  on a $G$-spectrum $Z$ and $H \cap {}^x H$ is trivial for all $x$ not
  in the normalizer of $H$. Then there is a shifted equivalence
  between the Tate and opposite Tate spectra:
  \[
    (Z^{t\FF})^G \simeq \left[Z^{tH}\right]^{W_G(H)} \simeq \Sigma (Z^{t^\op \FF})^G.
  \]
\end{prop}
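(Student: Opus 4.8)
The plan is to chain together the results just proved. The starting point is the preceding Corollary: under the hypothesis that $\FF \cap H$ is the trivial family and that $\FF' \cap H = \FF \cap H$, together with the invertibility of $[G:H]$ on $Z$ and triviality of $H \cap {}^xH$ outside $N_G(H)$, we get that the natural maps $Z^{t\FF} \to Z^{t\FF'}$ and $Z^{t^\op\FF'} \to Z^{t^\op\FF}$ are equivalences. The key move is to apply this with $\FF' = \FF[H]$, the smallest family containing $H$ (equivalently, all subgroups subconjugate to $H$). Since $H$ is in $\FF'$, we have $\FF' \cap H = (\text{all subgroups of }H)$; but this is not the trivial family, so one cannot apply the Corollary with this $\FF'$ directly. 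Instead I would work with the family $\FF'$ generated by $\FF$ \emph{and} all proper subgroups of $H$-conjugates, or more cleanly, restrict attention to a subgroup $K$ of $H$ small enough that the hypotheses hold --- but the cleanest route is the following: use the Proposition about $W_G(H)$-fixed points together with Warwick duality.

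Concretely: first, by the $W_G(H)$-fixed-point Proposition above (applied to $\FF$, using that $[G:H]$ acts invertibly and $H \cap {}^xH \in \FF$ for $x \notin N_G(H)$ --- here even trivial), restriction induces an isomorphism on homotopy groups
\[
  \pi_*^G(Z^{t\FF}) \xrightarrow{\ \sim\ } \bigl[\pi_*^H(Z^{t\FF})\bigr]^{W_G(H)}.
\]
Second, I claim $\pi_*^H(Z^{t\FF})$ agrees with $\pi_*^H(Z^{tH})$; this is because $Z^{t\FF}$ and $Z^{tH} = Z^{t(\{e\}\cap H)}$, viewed $H$-equivariantly, differ only in that the former kills $K$-fixed points for $K \in \FF \cap H$, but $\FF \cap H$ is the trivial family, so there is nothing extra to kill and the comparison map $Z^{t\{e\}} \to Z^{t\FF}$ is an $H$-equivariant equivalence on the relevant fixed points. (This uses the characterization from Lemma~\ref{lem:weylinvariants}'s proof that $\wt{E\FF}$-modules are detected by vanishing of $K$-fixed points for $K \in \FF$.) Combining the two gives the first asserted equivalence $(Z^{t\FF})^G \simeq [Z^{tH}]^{W_G(H)}$; the same argument with opposite Tate spectra gives $(Z^{t^\op\FF})^G \simeq [Z^{t^\op H}]^{W_G(H)}$. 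Finally, apply the Warwick-duality Corollary, $Z^{tH} \simeq \Sigma Z^{t^\op H}$ (for the group $H$ with the trivial family), and take $W_G(H)$-fixed points, which commutes with the suspension, to obtain $[Z^{tH}]^{W_G(H)} \simeq \Sigma [Z^{t^\op H}]^{W_G(H)} \simeq \Sigma (Z^{t^\op\FF})^G$.

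The main obstacle I anticipate is the second step: carefully justifying that passing from $\FF$ to the trivial family does not change the $H$-fixed points of the Tate construction, i.e. that the natural map $Z^{tH} \to (Z^{t\FF})$ --- or rather the relevant comparison --- is an $H$-equivalence after taking into account that we only care about $H$ and its subgroups. One must be careful that $\wt{E\FF}$, built from a family of subgroups of $G$, restricts along $H \hookrightarrow G$ to $\wt{E(\FF\cap H)}$, which is $\wt{E(\{e\})} = EH_+$-cofiber, i.e. contractible; hence $\Res^G_H(Z^{t\FF}) \simeq \Res^G_H(F(EG_+, Z))$ smashed with a contractible space --- wait, that would make it trivial, which is wrong. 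The correct statement is that $\Res^G_H \wt{E\FF}$ need not be $\wt{E(\FF\cap H)}$ but has the same $K$-fixed points for $K \leq H$, which is what actually matters for computing $\pi^H_*$. Getting this comparison precise, rather than the naive restriction-of-classifying-spaces claim, is the delicate point; everything else is a formal concatenation of the cited results.
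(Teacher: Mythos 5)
Your argument is correct and is essentially the paper's: the proposition is stated there with only the one-line justification that it follows by combining the preceding transfer-splitting results with Warwick duality (applied after reducing to the trivial family), which is exactly what you do. The worry in your final paragraph is unfounded: $\Res^G_H\wt{E\FF}$ really is $H$-equivalent to $\wt{E(\FF\cap H)}$ by the fixed-point characterization of classifying spaces of families plus the equivariant Whitehead theorem, and $\wt{E\{e\}}$ is only \emph{non-equivariantly} contractible (its $K$-fixed points for nontrivial $K\leq H$ are $S^0$), so no collapse occurs.
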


\subsection{Localization properties}

In this section, we will show that (opposite) Tate spectra naturally
decompose as a finite direct sum over localizations.

\begin{defn}
  Suppose that $G$ is a group with a family $\FF$ of subgroups.  We
  write $d({\mathcal{F}})$ for the greatest common divisor of the
  indices $[G:H]$ for $H \in \mathcal{F}$. If $\mathcal{F}$ is
  understood, we simply write $d$.
\end{defn}

\begin{lem}
  \label{lem:indexvanishing}
  Suppose that $G$ is a group with a family $\mathcal{F}$ of
  subgroups. If $Z$ is a $G$-spectrum that is acted on invertibly by
  $d(\FF)$ then the spectra $Z^{t\FF}$ and $Z^{t^\op\FF}$ are trivial.
\end{lem}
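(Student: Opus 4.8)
The plan is to leverage the existing transfer-isomorphism machinery to force the spectrum to be a module over itself on which a unit acts by an element that must simultaneously be zero. Concretely, $Z^{t\FF}$ (and likewise $Z^{t^\op\FF}$) is a module over the smashing localization $\wt{E\FF}$, so its $K$-fixed points vanish for every $K \in \FF$. The key observation is that the Burnside ring element $\sum_{H} [G/H]$, summed over a suitable collection of representatives of the subgroups in $\FF$, acts on $\pi_*^G(Z^{t\FF})$ through the transfer–restriction composites $\Tr^G_H \Res^G_H$, each of which factors through $\pi_*^K$ for $K \in \FF$ and hence is zero on these Tate spectra. On the other hand, after inverting $d = d(\FF)$, a $\mb Z$-linear combination of the indices $[G:H]$ equals $1$, so a corresponding combination of these Burnside elements maps to $1 \in A(G)[1/d]$, acting as the identity.

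First I would recall from Lemma~\ref{lem:transferrestrict} the essential mechanism: the factorization of $\mb S \to \End(Z^{t\FF})$ through $\End(EG_+[1/d])^\op$, which shows that the action of $A(G)$ on $\pi_*^G(Z^{t\FF})$ factors through $\pi_0((\mb S[1/d])^{hG})$, complete with respect to the augmentation ideal. Next I would observe that each generator $[G/H]$ of $A(G)$ with $H \in \FF$ acts as $\Tr^G_H \Res^G_H$ on the $G$-fixed homotopy, and that this vanishes: for a $\wt{E\FF}$-module the restriction to any $H \in \FF$ lands in $\pi_*^H$, which is already zero, so the transfer back is zero a fortiori. (Equivalently: $Z^{t\FF}$, being an $\wt{E\FF}$-module, is built from cells $G/K_+$ with $K \in \FF$, and $\Tr^G_H\Res^G_H$ on such a cell is a sum of transfers from subgroups conjugate into $H \cap K \in \FF$.) Then I would use that $\gcd_{H \in \FF}[G:H] = d$ acts invertibly on $Z$: by Bézout there are integers $n_H$ with $\sum_{H \in \FF} n_H [G:H] = d$, and in $A(G)$ we have $[G/H] \equiv [G:H] \bmod I$, so $\sum n_H [G/H] \equiv d$ modulo the augmentation ideal. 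Since $d$ is invertible and the ring $\pi_0((\mb S[1/d])^{hG})$ is $I$-complete, $\sum n_H [G/H]$ is a unit in $\End(Z^{t\FF})$. But this same element acts as $\sum n_H \Tr^G_H\Res^G_H = 0$ on $\pi_*^G(Z^{t\FF})$; a unit that acts as zero forces $\pi_*^G(Z^{t\FF}) = 0$, and since this holds for all $G$-spectra $Z$ satisfying the hypothesis (in particular after restricting along any subgroup, whose index divides $d$ suitably — or more simply, since the same argument applies verbatim to the underlying spectrum), the spectrum $Z^{t\FF}$ is trivial. The identical argument, using $\End(EG_+[1/d])$ in place of its opposite, disposes of $Z^{t^\op\FF}$.

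The main obstacle I anticipate is the bookkeeping around exactly which subgroups to sum over and the completeness argument: one must be careful that $[G/H]$ really does reduce to $[G:H]$ modulo the augmentation ideal $I$ (this is the standard fact that the mark homomorphism at the trivial subgroup sends $[G/H]$ to $|G/H|$), and that invertibility of $d$ together with $I$-adic completeness genuinely upgrades "congruent to a unit mod $I$" to "is a unit". This is precisely the completeness step already used in the proof of Lemma~\ref{lem:transferrestrict}, so it should transfer with only cosmetic changes; the new content is merely assembling the Bézout combination of the several generators rather than working with a single $[G/H]$. A secondary point to address cleanly is that triviality of $\pi_*^G$ for all $G$ (equivalently, for the underlying nonequivariant spectrum) suffices to conclude the spectrum itself is trivial — which follows since $Z^{t\FF}$ depends only on the underlying $G$-action (Remark after Definition~\ref{def:tateandopposite}) and the argument applies to the underlying spectrum directly.
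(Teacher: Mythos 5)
Your core mechanism is the same as the paper's: both proofs factor the unit $\mb S \to \End(Z^{t\FF})$ through the Burnside ring, use the $F(EG_+,-)$ factor (plus invertibility of $d$ on $Z$ and the completeness statement from Lemma~\ref{lem:transferrestrict}) to make the target $I$-adically complete, use the $\wt{E\FF}$ factor to kill the classes $[G/H]$ for $H \in \FF$, and conclude from $A(G)/(I+J+(d)) = 0$ that the relevant ring vanishes. Your B\'ezout combination $\sum n_H [G/H]$ is exactly an explicit witness for the paper's computation $A/(I+J) = \mb Z/d$; the paper merely phrases the endgame as ``the $I$-adic completion of $A[1/d]/J$ is the zero ring'' rather than exhibiting a single element that is simultaneously a unit and zero.

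The genuine gap is the final step. You only prove $\pi_*^G(Z^{t\FF}) = 0$, and neither of your proposed upgrades to triviality of the $G$-spectrum works as stated. The ``underlying spectrum'' remark is vacuous: $\wt{E\FF}$ is nonequivariantly contractible for any nonempty family, so the underlying spectrum of $Z^{t\FF}$ is always trivial and says nothing about the genuine $G$-spectrum. The ``restrict along a subgroup $K$'' route lands you in $(Z|_K)^{t(\FF\cap K)}$, where the relevant gcd is $\gcd\{[K:H] : H \in \FF, H \leq K\}$, and it is not immediate that this acts invertibly on $Z$; one needs an extra argument (for instance, that every prime dividing this gcd divides $d(\FF)$, which follows from a Sylow argument using closure of $\FF$ under conjugation and subgroups), which you do not supply. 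The clean fix --- and what the paper actually does --- is to run the identical unit-equals-zero argument in $\pi_0^G\End(Z^{t\FF})$ rather than in $\pi_*^G(Z^{t\FF})$: each $[G/H]$ with $H \in \FF$ already acts by zero there, since $\Tr^G_H\Res^G_H$ factors through $[X, Z^{t\FF}]^H = 0$ for any $X$, so the identity map is both a unit and zero and the spectrum itself is trivial. Alternatively, note $\pi_*^K(Z^{t\FF}) \cong \pi_*^G((G/K_+ \otimes Z)^{t\FF})$ and $d$ still acts invertibly on $G/K_+ \otimes Z$, so your $\pi_*^G$ statement applied to $G/K_+ \otimes Z$ covers all fixed points.
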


\begin{proof}
  The map from the Burnside ring $A(G) = \pi_0^G(\mb S)$ to the
  endomorphism ring of $Z^{t\FF}$ factors through three rings:
  \begin{enumerate}
  \item $\pi_0^G \End(Z)$, where $d$ acts by a unit;
  \item $\pi_0^G \End(\wt{E\FF}) \cong \pi_0^G \wt{E\FF}$, where the
    ideal $J$ generated by the basis elements $[G/H]$ for $H \in
    \mathcal{F}$ is sent to zero; and 
  \item $\pi_0^G \End(EG_+)^\op \cong \pi_0 \mb S^{hG}$, which is complete
    in the topology generated by the augmentation ideal $I$.
  \end{enumerate}
  Therefore, the result factors through the ring
  $(A^\wedge_I)[1/d]/J$. Since the ring is a finitely generated
  abelian group, this is the completion of the ring $A[1/d]/J$ with
  respect to the topology defined by the image of $I$; however,
  $A/(I+J) = \mb Z/d$, and so this completion is trivial. Therefore,
  $\End(Z^{t\FF})$ must be the zero ring.

  The same argument applies to $Z^{t^\op\FF}$, except that we must
  use the opposite rings $\End(\wt{E\FF})^\op$ and $\End(EG_+)$.
\end{proof}

\begin{lem}
  \label{lem:tatesplitting}
  For any finite group $G$ acting on a spectrum $Z$ and any family
  $\mathcal{F}$, the natural localization maps induce decompositions
  \[
    Z^{t\FF} \simeq \bigoplus_{q | d(\FF)} (Z_{(q)})^{t\FF}
  \]
  and
  \[
    Z^{t^\op\FF} \simeq \bigoplus_{q | d(\FF)} (Z_{(q)})^{t^\op\FF}
  \]
  where the sums range over the primes $q$ dividing $d(\FF)$. In
  particular, localizing at a prime $q$ commutes with taking Tate
  spectra or opposite Tate spectra.
\end{lem}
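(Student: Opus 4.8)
The plan is to realize the decomposition from a single fiber sequence together with Lemma~\ref{lem:indexvanishing}. Write $d = d(\FF)$, let $q_1,\dots,q_r$ be the distinct primes dividing $d$, let $\rho\co Z \to \prod_{i=1}^{r} Z_{(q_i)}$ be the map whose components are the localization maps, and set $W = \mathrm{fib}(\rho)$. Both $(-)^{t\FF}$ and $(-)^{t^\op\FF}$ are exact---being built from smash products and function spectra---and they preserve finite products, which in spectra coincide with finite coproducts; so applying them to the cofiber sequence $W \to Z \to \prod_i Z_{(q_i)}$ produces a cofiber sequence
\[
  W^{t\FF} \to Z^{t\FF} \to \bigoplus_i (Z_{(q_i)})^{t\FF}
\]
together with the analogous one for $t^\op\FF$. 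It therefore suffices to prove $W^{t\FF} = W^{t^\op\FF} = 0$, and by Lemma~\ref{lem:indexvanishing} this holds once $d$ acts invertibly on $W$. (If $d = 1$ the product is empty, $W = Z$, and both sides vanish by Lemma~\ref{lem:indexvanishing}, so assume $r \ge 1$.)

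To see that $d$ acts invertibly on $W$ it is enough, since $d = \prod_i q_i^{a_i}$ with the $q_i$ distinct, to check that each $q_i$ acts invertibly on $W$, i.e.\ that $W \otimes \mb S/q_i = 0$. Localizing the defining fiber sequence at $q_i$---localization being exact and product-preserving---gives $W_{(q_i)} \simeq \mathrm{fib}\bigl(Z_{(q_i)} \to \prod_j (Z_{(q_j)})_{(q_i)}\bigr)$. The $j = i$ component of this map is the $q_i$-localization of the $q_i$-localization map $Z \to Z_{(q_i)}$, hence an equivalence, so $\rho_{(q_i)}$ is a split monomorphism---split by the $i$-th projection---with cofiber $\prod_{j \ne i}(Z_{(q_j)})_{(q_i)}$; thus $W_{(q_i)} \simeq \Sigma^{-1}\prod_{j \ne i}(Z_{(q_j)})_{(q_i)}$. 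Now for $j \ne i$ one has $\mb Z_{(q_j)} \otimes_{\mb Z} \mb Z_{(q_i)} = \mb Q$, so $(Z_{(q_j)})_{(q_i)} \simeq Z_{\mb Q}$ is rational, and therefore $W_{(q_i)}$ is a desuspension of a finite product of copies of $Z_{\mb Q}$; in particular $q_i$ acts invertibly on $W_{(q_i)}$. Finally $\mb S/q_i$ is $q_i$-local, so $W \otimes \mb S/q_i$ is $q_i$-local and hence identified with $W_{(q_i)} \otimes \mb S/q_i$, which vanishes since $q_i$ is invertible on the rational spectrum $W_{(q_i)}$. Thus $W \otimes \mb S/q_i = 0$ for all $i$, $d$ acts invertibly on $W$, and we are done.

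Both decompositions are thereby established. The final assertion is then formal: by additivity of $(-)^{t\FF}$ the equivalence $\ell\co Z_{(q)} \to Z_{(q)}$ (for a prime $\ell \ne q$) is sent to an equivalence, so each summand $(Z_{(q)})^{t\FF}$ is itself $q$-local---which is the sense in which localizing the input at $q$ is compatible with the (opposite) Tate construction. The step I expect to require the most care is the identification of $W_{(q_i)}$ with a shift of a finite product of copies of $Z_{\mb Q}$: that is where the split monomorphism and the elementary identity $\mb Z_{(q_j)} \otimes \mb Z_{(q_i)} = \mb Q$ for distinct primes do the real work, the rest being bookkeeping with exact functors and smashing localizations.
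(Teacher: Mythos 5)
Your argument is correct, and it runs on the same engine as the paper's proof---an arithmetic fracture of $Z$ followed by Lemma~\ref{lem:indexvanishing}---but the fracture you choose is genuinely different. The paper splits $Z$ by the fiber sequence $\bigoplus_{q \mid d} \Sigma^{-1} Z[1/q]/Z \to Z \to Z[1/d]$, kills the Tate spectra of $Z[1/d]$ by Lemma~\ref{lem:indexvanishing}, and then identifies each surviving piece $(\Sigma^{-1}Z[1/q]/Z)^{t\FF}$ with $(Z_{(q)})^{t\FF}$ via the second fiber sequence $\Sigma^{-1}Z[1/q]/Z \to Z_{(q)} \to Z_{\mb Q}$ and the vanishing of $(Z_{\mb Q})^{t\FF}$; in both applications the vanishing lemma is invoked only for spectra ($Z[1/d]$ and $Z_{\mb Q}$) on which invertibility of $d$ is visible by construction. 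You instead map $Z$ directly to $\prod_q Z_{(q)}$ and must then show that $d$ acts invertibly on the fiber $W$---this is the extra work your decomposition imposes, and your verification (localize at each $q_i$, split off the $i$-th factor, and identify the complement as rational via $\mb Z_{(q_i)} \otimes \mb Z_{(q_j)} = \mb Q$, then conclude $W \otimes \mb S/q_i = 0$) is correct. The trade-off is roughly: two fiber sequences with ``obvious'' invertibility versus one fiber sequence plus a short localization computation; both are legitimate, and both deliver the summands already in the form $(Z_{(q)})^{t\FF}$. Your closing gloss on the final sentence of the lemma---that each summand $(Z_{(q)})^{t\FF}$ is $q$-local, so the Tate construction can be computed one prime at a time---is the right reading of that assertion and matches how the paper uses it later.
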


\begin{proof}
  First note that there is a natural $G$-equivariant fiber sequence
  \[
    \bigoplus_{q | d} \Sigma^{-1} Z[1/q]/Z \to Z \to
    Z[1/d].
  \]
  We also have a fiber sequence
  \[
    \Sigma^{-1} Z[1/q]/Z \to Z_{(q)} \to Z_{\mb Q}.
  \]
  The $\FF$-Tate spectrum and opposite $\FF$-Tate spectra for $Z[1/d]$
  or $Z_{\mb Q}$ are trivial by
  Lemma~\ref{lem:indexvanishing}. Therefore,
  \[
    Z^{t\FF} \simeq \bigoplus_q \Sigma^{-1} (Z_{(q)}/Z)^{t\FF} \simeq
    \bigoplus_q (Z_{(q)})^{t\FF}
  \]
  and similarly for opposite Tate spectra.
\end{proof}

\subsection{Symmetric groups}

We can now apply these results to the symmetric groups.

\begin{prop}
  Let $\TT$ be the family of subgroups of $\Sigma_n$ that do not act
  transitively on $\{1,\dots,n\}$. If $n = p^k$ for some prime $p$,
  then $d(\TT)$ is $p$; otherwise $d(\TT) = 1$.
\end{prop}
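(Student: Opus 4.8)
The plan is to identify $d(\TT)$ with the classical quantity $\gcd_{0<k<n}\binom{n}{k}$ and then evaluate the latter. First I would note that any $H\in\TT$ has an orbit $S\subsetneq\{1,\dots,n\}$ that is a nonempty proper subset, so $H$ sits inside the Young subgroup $\Sigma_S\times\Sigma_{S^c}\cong\Sigma_k\times\Sigma_{n-k}$ with $k=|S|$, $0<k<n$. Since index is multiplicative along towers of subgroups, $\binom{n}{k}=[\Sigma_n:\Sigma_k\times\Sigma_{n-k}]$ divides $[\Sigma_n:H]$, so $\gcd_{0<k<n}\binom{n}{k}\mid d(\TT)$; conversely each $\Sigma_k\times\Sigma_{n-k}$ lies in $\TT$, giving the reverse divisibility. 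Hence $d(\TT)=\gcd_{0<k<n}\binom{n}{k}$, where I take $n\ge 2$ (the case $n=1$ is degenerate, with $\TT=\emptyset$).

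Then I would evaluate this gcd by the usual $p$-adic analysis via Kummer's theorem, which computes $v_\ell\binom{n}{k}$ as the number of carries in adding $k$ to $n-k$ in base $\ell$. If $n=p^m$, then for $0<k<p^m$ the identity $\binom{p^m}{k}=\tfrac{p^m}{k}\binom{p^m-1}{k-1}$, combined with the fact that $\binom{p^m-1}{j}$ is always prime to $p$ (the base-$p$ digits of $p^m-1$ are all $p-1$, so digitwise addition never carries), gives $v_p\binom{p^m}{k}=m-v_p(k)\ge 1$, with equality when $k=p^{m-1}$; since $\binom{p^m}{1}=p^m$ has no prime factor other than $p$, the gcd is exactly $p$. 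If instead $n$ is not a power of $\ell$, let $\ell^i$ be the place of the leading nonzero base-$\ell$ digit $n_i$ of $n$ and set $k=\ell^i$; then $0<k<n$ and Lucas' theorem gives $\binom{n}{k}\equiv n_i\not\equiv 0\pmod{\ell}$, so $\ell\nmid\gcd_{0<k<n}\binom{n}{k}$. When $n$ is not a prime power this holds for every prime $\ell$, so the gcd is $1$.

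I do not expect a real obstacle: the crux is the standard prime-power criterion for $\gcd_k\binom{n}{k}$, and Kummer's theorem pins down the relevant valuation from both sides at once (an induction on $n$ through Pascal's identity is an alternative route). The only things to watch are the reduction in the first step — that every non-transitive subgroup is caught inside a Young subgroup, and that enlarging a subgroup can only divide its index — and the bookkeeping for the degenerate small case.
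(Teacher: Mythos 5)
Your proof is correct and follows the same route as the paper: reduce to the Young subgroups $\Sigma_k \times \Sigma_{n-k}$ to identify $d(\TT)$ with $\gcd_{0<k<n}\binom{n}{k}$, and then evaluate that gcd, which is $p$ for $n$ a power of the prime $p$ and $1$ otherwise. The only difference is that the paper quotes this value of the gcd as a standard fact, whereas you supply the Kummer/Lucas computation (and the $n=1$ caveat) in full.
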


\begin{proof}
  Any subgroup that does not act transtively is a subgroup of some
  conjugate of $\Sigma_k \times \Sigma_{n-k}$ for some $k$. The
  greatest common divisor of the indices
  $[\Sigma_n:\Sigma_k \times \Sigma_{n-k}]$ is the greatest common
  divisor of the binomial coefficients $\binom{n}{k}$ for $0 < k <
  n$. This is $p$ if $n = p^k$ and $1$ otherwise.
\end{proof}

\begin{cor}
  \label{cor:transitivetatevanishing}
  For any $\Sigma_n$-spectrum $Z$, $Z^{t\TT}$ and $Z^{t^\op \TT}$ are
  $p$-local if $n = p^k$ for some prime $p$, and trivial otherwise. If
  $Z$ is $p$-local, then these are trivial unless $n=p^k$.
\end{cor}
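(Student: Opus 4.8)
The plan is to read this corollary off from the three results that immediately precede it: the Proposition computing $d(\TT)$, the splitting of Lemma~\ref{lem:tatesplitting}, and the vanishing criterion of Lemma~\ref{lem:indexvanishing}. Accordingly, the proof will be pure bookkeeping, since all the genuine content — in particular the computation of the greatest common divisor of the binomial coefficients — has already been carried out.

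First I would record what the Proposition gives: $d(\TT) = p$ when $n = p^k$ for a prime $p$, and $d(\TT) = 1$ otherwise. In the prime-power case $n = p^k$, Lemma~\ref{lem:tatesplitting} yields $Z^{t\TT} \simeq \bigoplus_{q \mid d(\TT)} (Z_{(q)})^{t\TT}$, and similarly for the opposite Tate spectrum. Since $p$ is the only prime dividing $d(\TT) = p$, this collapses to $Z^{t\TT} \simeq (Z_{(p)})^{t\TT}$, which is $p$-local; indeed Lemma~\ref{lem:tatesplitting} also records that localization at a prime commutes with forming (opposite) Tate spectra, so $(Z_{(p)})^{t\TT} \simeq (Z^{t\TT})_{(p)}$. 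In the case where $n$ is not a prime power, $d(\TT) = 1$ acts invertibly on every spectrum, so Lemma~\ref{lem:indexvanishing} forces $Z^{t\TT} \simeq Z^{t^\op\TT} \simeq 0$; equivalently, the indexing set of the direct sum in Lemma~\ref{lem:tatesplitting} is empty.

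For the final sentence, suppose $Z$ is $p$-local. If $n$ is not a prime power the previous paragraph already gives vanishing. If $n = q^k$ for a prime $q \neq p$, then $d(\TT) = q$ acts invertibly on $Z$ (since $Z$ is $p$-local), so Lemma~\ref{lem:indexvanishing} again gives $Z^{t\TT} \simeq Z^{t^\op\TT} \simeq 0$; hence these spectra are trivial unless $n = p^k$. I do not expect any real obstacle: the only things to keep straight are which primes divide $d(\TT)$ and the characterization of $p$-locality as invertibility of every prime different from $p$, both of which are routine once the preceding Proposition and Lemmas are in hand.
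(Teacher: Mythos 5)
Your proposal is correct and is exactly the intended argument: the paper states this as an immediate corollary of the proposition computing $d(\TT)$ together with Lemma~\ref{lem:indexvanishing} and Lemma~\ref{lem:tatesplitting}, with no further proof given. Your bookkeeping — including the observation that $p$-locality of $Z^{t\TT}$ follows because the splitting collapses to the single summand $(Z_{(p)})^{t\TT}\simeq (Z^{t\TT})_{(p)}$ — fills in the omitted details correctly.
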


\begin{defn}
  \label{def:reducedrep}
  For a natural number $d$, let $\gamma$ be the $(d-1)$-dimensional
  reduced permutation representation of the symmetric group
  $\Sigma_d$, and $S^{\gamma}$ the 1-point compactification viewed as
  a $\Sigma_d$-space. We will write $S^{n \gamma}$ for the $n$-fold
  smash power, and for $P \to X$ a principal $\Sigma_d$-bundle and
  $n \in \mb Z$ we write $X^{n\gamma}$ for the Thom spectrum of the
  associated virtual bundle $n\gamma$ on $X$.
\end{defn}

In particular, as $\Sigma_d$-spaces there is an equivalence $S^1 \sma
S^{\gamma} \simeq (S^1)^{\sma d}$. The spaces $S^{n\gamma}$ form a
directed sequence of $\Sigma_d$-spaces
\[
S^0 \to S^\gamma \to S^{2 \gamma} \to S^{3 \gamma} \to \cdots,
\]
and the colimit turns out to be a classifying space for a family of
subgroups as follows.

\begin{prop}
  \label{prop:transitiveclassifying}
  Let $\TT$ be the family of subgroups $H \subset \Sigma_d$
  that do not act transitively on $\{1,\dots,d\}$. Then there is a
  cofiber sequence
  \[
    E\TT_+ \to S^0 \to \colim S^{n\gamma}
  \]
  of based spaces. In particular, the colimit is a model for
  $\wt{E\TT}$.
\end{prop}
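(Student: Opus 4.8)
The plan is to recognize $\colim_n S^{n\gamma}$ as the classifying space $\wt{E\TT}$ via the standard characterization: a based $\Sigma_d$-space $Z$ is equivalent to $\wt{E\TT}$ precisely when $Z^H$ is contractible for $H \in \TT$ and $Z^H \simeq S^0$ (weakly) for $H \notin \TT$, i.e.\ for $H$ acting transitively. Equivalently, it suffices to check these fixed-point conditions on the colimit and then identify the fiber of $S^0 \to \colim_n S^{n\gamma}$ with $E\TT_+$. Since smashing is a left adjoint it commutes with the filtered colimit, so once I know $S^0 \to \colim_n S^{n\gamma}$ has fiber $E\TT_+$ as based $\Sigma_d$-spaces, the cofiber sequence follows formally.

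First I would compute the fixed points $(S^{n\gamma})^H$ for a subgroup $H \subset \Sigma_d$. The representation $\gamma$ is the reduced permutation representation $\mb R^d / \mb R$, so $\gamma^H$ is the reduced $H$-fixed subspace of $\mb R^d$: if $H$ acts on $\{1,\dots,d\}$ with $c$ orbits, then $(\mb R^d)^H \cong \mb R^c$ and hence $\gamma^H \cong \mb R^{c-1}$. Taking one-point compactifications and $n$-fold smash powers, $(S^{n\gamma})^H \cong S^{n(c-1)}$. Now pass to the colimit over the inclusions $S^{n\gamma} \hookrightarrow S^{(n+1)\gamma}$: since fixed points commute with these colimits of spaces, $(\colim_n S^{n\gamma})^H \simeq \colim_n S^{n(c-1)}$. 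If $H$ acts transitively then $c = 1$, so every term is $S^0$ and the colimit is $S^0$. If $H$ does not act transitively then $c \geq 2$, so we get $\colim_n S^{n(c-1)} = S^\infty \simeq *$. This is exactly the fixed-point signature of $\wt{E\TT}$.

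To get the cofiber sequence with $E\TT_+$ on the left, I would argue that the fiber $F$ of the based map $S^0 \to \colim_n S^{n\gamma}$ is a model for $E\TT_+$: its $H$-fixed points are the fiber of $(S^0)^H \to (\colim_n S^{n\gamma})^H$, which is $S^0 \xrightarrow{\sim} S^0$ (hence $F^H$ is contractible, in fact empty away from the basepoint after removing it) when $H$ is transitive, and is $S^0 \to *$ (hence $F^H \simeq S^0$, i.e.\ the unbased fiber is contractible) when $H \in \TT$ — precisely the fixed-point behavior characterizing $E\TT_+$. Alternatively, and perhaps more cleanly, I would identify the inclusion $E\TT_+ \to S^0$ directly: the unbased colimit $\colim_n S(n\gamma)$ of the unit spheres in $n\gamma$ is a free-away-from-$\TT$, $H$-contractible-for-$H\in\TT$ space, so it is a model for $E\TT$; its unreduced suspension is $\colim_n S^{n\gamma}$, and the cofiber sequence $E\TT_+ \to S^0 \to \Sigma^{\mathrm{unred}} E\TT$ is the standard one. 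I expect the main technical point to be the bookkeeping in this last step — making sure the maps in the directed system $S^{n\gamma} \to S^{(n+1)\gamma}$ are the evident inclusions coming from $\gamma \hookrightarrow \gamma \oplus \gamma$ (equivalently, smashing with $S^\gamma$ and using the based point $S^0 \to S^\gamma$), and that taking $H$-fixed points genuinely commutes with these particular colimits of CW-like $\Sigma_d$-spaces — rather than the fixed-point computation itself, which is elementary linear algebra.
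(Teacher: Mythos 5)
Your proposal is correct and, in its second (``more cleanly'') formulation, is essentially identical to the paper's proof: the paper likewise passes to the colimit of the cofiber sequences $S(n\gamma)_+ \to S^0 \to S^{n\gamma}$, identifies $\colim_n S(n\gamma)$ as a model for $E\TT$ by the same fixed-point computation ($S(n\gamma)^H = S(n\dim(\gamma^H))$ is empty when $\gamma^H = 0$ and has unbounded connectivity otherwise, with $\dim\gamma^H$ being one less than the number of $H$-orbits). The first route you sketch works too, though the ``fiber of $S^0 \to \colim S^{n\gamma}$'' phrasing is better replaced by observing that the map realizes the universal map $S^0 \to \wt{E\TT}$ on fixed points.
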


\begin{proof}
  The inclusion of the unit sphere into the unit disc determine cofiber
  sequences
  \[
    S(n\gamma)_+ \to S^0 \to S^{n\gamma}
  \]
  that are compatible as $n$ varies. We have
  \[
    S(n\gamma)^H = S(n \gamma^H) \cong S^{n \dim(\gamma^H) - 1}.
  \]
  If $\gamma^H \neq 0$, the connectivity of this space grows in an
  unbounded fashion; if $\gamma^H = 0$, this space is empty for all
  $n$. Therefore, the colimit of the $S(n\gamma)$ is a model for
  $E\mathcal{F}$, where $\mathcal{F}$ is the family of subgroups $H$
  such that $\dim(\gamma^H) > 0$. However, by definition of $\gamma$,
  the dimension of $\gamma^H$ is one less than the number of orbits
  for the action of $H$ on $\{1,\dots,d\}$, and so $\mathcal{F} =
  \TT$.
\end{proof}

\begin{prop}
  \label{prop:tateandopposite}
  Let $W$ be the Weyl group
  $W_{\Sigma_p}(C_p) \cong \mb F_p^\times$. Then for any
  $\Sigma_p$-spectrum $Z$, the $p$-localization of the Tate spectrum
  for $\Sigma_p$ splits off from the Tate spectrum for $C_p$, and both
  are identified with a shifted opposite Tate spectrum:
  \[
    Z^{t\Sigma_p}_{(p)} \simeq (Z^{tC_p})^W \simeq \Sigma Z^{t^\op
      \Sigma_p}_{(p)}.
  \]
\end{prop}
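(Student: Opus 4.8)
The plan is to obtain this as the special case $G=\Sigma_p$, $H=C_p$, $\FF=\TT$ of the general comparison between Tate and opposite Tate spectra established above — the one asserting, for a group $G$ with a subgroup $H$ such that $\FF\cap H$ is trivial, $[G:H]$ acts invertibly on $Z$, and $H\cap{}^xH$ is trivial for $x$ outside $N_G(H)$, that $(Z^{t\FF})^G\simeq[Z^{tH}]^{W_G(H)}\simeq\Sigma(Z^{t^\op\FF})^G$ — supplemented by the family-comparison corollary above and the localization decomposition of Lemma~\ref{lem:tatesplitting}.

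First I would check the hypotheses with $C_p\subset\Sigma_p$ generated by a $p$-cycle. Since a $p$-cycle acts transitively on $\{1,\dots,p\}$ we have $C_p\notin\TT$, while the trivial subgroup is not transitive (as $p>1$), so $\TT\cap C_p$ is trivial. Two subgroups of order $p$ that meet nontrivially must coincide, so $C_p\cap{}^xC_p$ is trivial for every $x\notin N_{\Sigma_p}(C_p)$. And $[\Sigma_p:C_p]=(p-1)!$ is prime to $p$, hence acts invertibly on any $p$-local spectrum. (The identification $W_{\Sigma_p}(C_p)\cong\mb F_p^\times$ is part of the statement; concretely the centralizer of a $p$-cycle in $\Sigma_p$ is $C_p$, so conjugation embeds $W_{\Sigma_p}(C_p)$ into $\Aut(C_p)\cong\mb F_p^\times$, and surjectivity follows from the count $\lvert N_{\Sigma_p}(C_p)\rvert=p(p-1)$.)

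With these in hand, the general comparison gives, for any $p$-local $\Sigma_p$-spectrum $Z$,
\[
  (Z^{t\TT})^{\Sigma_p}\simeq\bigl[Z^{tC_p}\bigr]^{\mb F_p^\times}\simeq\Sigma\,(Z^{t^\op\TT})^{\Sigma_p}.
\]
To recognize the two outer terms as the usual Tate and opposite Tate constructions for $\Sigma_p$ — those of the trivial family, which is what $Z^{t\Sigma_p}$ and $Z^{t^\op\Sigma_p}$ denote — I would apply the family-comparison corollary to the inclusion of the trivial family into $\TT$: both restrict to the trivial family on $C_p$, and the remaining hypotheses are exactly those just verified, so the comparison maps $Z^{t\Sigma_p}\to Z^{t\TT}$ and $Z^{t^\op\TT}\to Z^{t^\op\Sigma_p}$ are equivalences of $\Sigma_p$-spectra, hence remain so after passing to $\Sigma_p$-fixed points. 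This proves $Z^{t\Sigma_p}\simeq[Z^{tC_p}]^{\mb F_p^\times}\simeq\Sigma Z^{t^\op\Sigma_p}$ for $p$-local $Z$.

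Finally I would drop the $p$-locality assumption via Lemma~\ref{lem:tatesplitting}. For $C_p$ the trivial family has $d=p$, so $Z^{tC_p}$ is already $p$-local and $[Z^{tC_p}]^{\mb F_p^\times}\simeq[Z_{(p)}^{tC_p}]^{\mb F_p^\times}$; for $\Sigma_p$ the trivial family has $d=p!$, so the same lemma gives $(Z^{t\Sigma_p})_{(p)}\simeq(Z_{(p)})^{t\Sigma_p}$ and likewise for the opposite Tate spectrum. Applying the $p$-local case to $Z_{(p)}$ then yields
\[
  Z^{t\Sigma_p}_{(p)}\simeq\bigl[Z^{tC_p}\bigr]^{\mb F_p^\times}\simeq\Sigma\,Z^{t^\op\Sigma_p}_{(p)}.
\]
I do not expect any real difficulty: the substantive inputs (Warwick duality, the double-coset/transfer splitting of $\pi_*^{\Sigma_p}$ off $\pi_*^{C_p}$, the localization decomposition) are already packaged in the cited results, so what remains is bookkeeping. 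The one point needing care — and the closest thing to an obstacle — is that the general comparison requires $[G:H]$ to act invertibly on $Z$, whereas the Tate construction $Z^{t\Sigma_p}$ for a general $Z$ need not be $p$-local; this is precisely why the reduction to $Z_{(p)}$ in the last step is necessary.
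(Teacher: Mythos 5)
Your proposal is correct and follows essentially the same route as the paper: reduce to $p$-local $Z$ via Lemma~\ref{lem:tatesplitting}, verify that $C_p\subset\Sigma_p$ has index prime to $p$, intersects the relevant family only in the trivial group, and satisfies the double-coset condition, and then invoke the general Tate versus opposite-Tate comparison proposition. The only difference is that you spell out the intermediate use of the family-comparison corollary to pass between $Z^{t\TT}$ and $Z^{t\Sigma_p}$, a step the paper leaves implicit.
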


\begin{proof}
  By Lemma~\ref{lem:tatesplitting}, it suffices to assume that $Z$ is
  $p$-local. The subgroup $C_p$ has index prime to $p$, and its
  intersection with the family $\TT$ consists only of the trivial
  group. Therefore, the result follows by
  Proposition~\ref{prop:tateandopposite}.
\end{proof}

\bibliography{masterbib}
\end{document}